\newtheorem{theorem}{Theorem}[section]
\newtheorem{example}[theorem]{Example}
\newtheorem{lemma}[theorem]{Lemma}
\newtheorem{remark}[theorem]{Remark}
\def\bD{\mathbb D}
\def\bE{\mathbb E}
\def\bN{\mathbb N}
\def\bP{\mathbb P}
\def\bR{\mathbb R}
\def\cA{\mathcal A}
\def\cB{\mathcal B}
\def\cD{\mathcal D}
\def\cF{\mathcal F}
\def\cH{\mathcal H}
\def\G{\Gamma}
\def\cD{\mathcal{D}}
\def\cF{\mathcal{F}}
\def\cG{\mathcal{G}}
\def\cI{\mathcal{I}}
\def\cH{\mathcal{H}}
\def\cP{\mathcal{P}}
\def\cS{\mathcal{S}}
\def\cX{\mathcal{X}}
\def\cY{\mathcal{Y}}
\def\fX{\mathfrak{X}}
\def\bD{\mathbb{D}}
\def\bE{\mathbb{E}}
\def\bR{\mathbb{R}}
\def\G{\Gamma}
\newcommand{\les}{\lesssim}
\def\e{\varepsilon}
\begin{document}

\title{Hyperbolic Anderson model with time-independent rough noise: Gaussian fluctuations\\}
\date{May 8, 2023}

\author{Raluca M. Balan\footnote{Corresponding author. University of Ottawa, Department of Mathematics and Statistics, STEM Building, 150 Louis-Pasteur Private,
		Ottawa, Ontario, K1N 6N5, Canada. E-mail: rbalan@uottawa.ca.} \footnote{Research supported by a grant from Natural Sciences and Engineering Research Council of Canada.}
	\and Wangjun Yuan\footnote{University of Luxembourg, Department of Mathematics, Maison du Nombre
6, Avenue de la Fonte
L-4364 Esch-sur-Alzette. Luxembourg. E-mail: ywangjun@connect.hku.hk.} \footnote{The author gratefully acknowledges the financial support of ERC Consolidator Grant 815703 "STAMFORD: Statistical Methods for High Dimensional Diffusions"}
}

\maketitle

\begin{abstract}
\noindent In this article, we study the hyperbolic Anderson model in dimension 1, driven by a time-independent rough noise, i.e. the noise associated with the fractional Brownian motion of Hurst index $H \in (1/4,1/2)$. We prove that, with appropriate normalization and centering, the spatial integral of the solution converges in distribution to the standard normal distribution, and we estimate the speed of this convergence in the total variation distance. We also prove the corresponding functional limit result.
Our method is based on a version of the second-order Gaussian Poincar\'e inequality developed recently in \cite{NXZ}, and relies on delicate moment estimates for the increments of the first and second Malliavin derivatives of the solution. These estimates are obtained using a connection with the wave equation with delta initial velocity, a method which is different than the one used in \cite{NXZ} for the parabolic Anderson model.
\end{abstract}

\medskip
\noindent {\bf Mathematics Subject Classifications (2020)}:
Primary 60H15; Secondary 60H07, 60G15, 60F05

\medskip
\noindent {\bf Keywords:} hyperbolic Anderson model, rough noise, Malliavin calculus, Stein's method for normal approximations

\pagebreak

\tableofcontents

\section{Introduction}

The study of stochastic partial differential equations (SPDEs) using the random field approach originates in Walsh' lecture notes \cite{walsh86}, which introduced the general framework, focusing mostly on equations driven by space-time Gaussian white noise in dimension 1.
In the seminal article \cite{dalang99}, Dalang extended the martingale measure method of Walsh to equations driven by spatially homogeneous Gaussian noise (white in time), and introduced powerful techniques for analyzing these equations.
Since then, this area has been growing at an accelerated pace. One of the tools that has been used extensively is Malliavin calculus. This tool is especially useful when the noise is colored in time (or time-independent), and It\^o calculus techniques cannot be used, due to a lack of martingale structures. We refer the reader to
\cite{chen-dalang15,CDOT,CHKK,chen-kim,dalang-sanz09,FK09,hu-nualart09,HNS11,HHLNT,HHNT,millet-sanz21,sanz-sus13}
for a small sample of relevant contributions related to SPDEs with various types of Gaussian noise.

In \cite{HNV}, a new line of investigations has been opened up in this area, focusing on the asymptotic behaviour of the spatial integral of the solution of the stochastic heat equation with space-time Gaussian white noise, as the size of the integration region becomes large. The main result of \cite{HNV} states that, with suitable normalization and centering, this integral converges to the standard normal distribution, and gives the speed of this convergence in the total variation distance. This result, called the ``Quantitative Central Limit Theorem'' (QCLT) is obtained by combining Malliavin calculus with Stein's method for normal approximations. Similar results and extensions have been obtained in the subsequent papers \cite{HNVZ,NZ20,NXZ,BY2022} for the solution of the stochastic heat equation with colored noise in space/time (or with time-independent noise), respectively in \cite{BNZ,DNZ,BNQZ,BY} for the solution of the stochastic wave equation. In both cases, the noise enters the equation multiplied by a Lipschitz function $\sigma(u)$ of the solution.
The most difficult case is when the noise is {\em rough in space}, i.e. it behaves in space like the fractional Brownian motion (fBm) with Hurst index $H<1/2$. This case has been studied in \cite{NXZ} for the {\em parabolic Anderson model} (pAm), the stochastic heat equation with a linear term $\sigma(u)=u$ multiplying the noise, using technical arguments that rely heavily on properties of the heat kernel.

The goal of the present article is to present the first study of this problem for the wave equation with rough noise in space. We will assume that the noise is time-independent, and we postpone the treatment of the time-dependent noise for future work. More precisely, in this article we consider the {\em hyperbolic Anderson model} (hAm) in dimension 1:
\begin{align}
\label{HAM}
	\begin{cases}
		\dfrac{\partial^2 u}{\partial t^2} (t,x)
		= \dfrac{\partial^2 u}{\partial x^2} (t,x) + \sqrt{\theta} u(t,x) \dot{W}(x), \ t>0, \ x \in \bR, \\
		u(0,x) = 1, \ \dfrac{\partial u}{\partial t} (0,x) = 0.
	\end{cases}
\end{align}

Often, we are interested in the case $\theta=1$. The reason we introduce a parameter $\theta>0$ is the following. Our main result, the QCLT for the spatial average of the solution $u_{\theta}$ of \eqref{HAM}, is obtained by applying a version of the second-order Gaussian Poincar\'e inequality (Proposition 2.4 of \cite{NXZ}). For this, we need to estimate the {\em fourth} moment of the increments of the Malliavin derivative $Du_{\theta}$ and of the rectangular increments of the second Malliavin derivative $D^2u_{\theta}$. The fact that we include a parameter $\theta>0$ in equation \eqref{HAM} allows us to compare the $p$-th moment (for $p>2$) of the solution $u_{\theta}$, of its Malliavian derivatives, or of their increments, with the {\em second} moment of the similar quantities corresponding to the solution $u_{(p-1)\theta}$, which are then treated using their chaos expansions. This comparison between moments plays a crucial role in the present article, and is derived using a hypercontractivity property for general SPDEs, which is of independent interest and is included in Appendix \ref{section-hyper}.

\medskip

The noise $W$ is time-independent and fractional in space with Hurst index $H \in (\frac{1}{4},\frac{1}{2})$, being given by a zero-mean Gaussian process $\{W(\varphi);\varphi \in \cD(\bR)\}$, defined on a complete probability space $(\Omega,\cF, \bP)$, with covariance
\[
\bE[W(\varphi)W(\psi)]
= c_H \int_{\bR} \cF \varphi(\xi) \overline{\cF \psi(\xi)}|\xi|^{1-2H}d\xi
=:\langle \varphi,\psi\rangle_{\cP_0}.
\]
Here
$\cD(\bR)$ is the space of $C^{\infty}$ functions with compact support in $\bR$, $\cF \varphi(\xi)=\int_{\bR}e^{-i\xi x}\varphi(x)dx$ is the Fourier transform of $\varphi$, and
\[
c_H=\frac{\Gamma(2H+1)\sin(\pi H)}{2\pi}.
\]
By approximation, the noise can be extended to an isonormal Gaussian process $\{W(\varphi);\varphi \in \cP_0\}$, as defined in Malliavin calculus, where $\cP_0$ is the Hilbert space defined as completion of $\cD(\bR)$ with respect to the inner product $\langle \cdot, \cdot \rangle_{\cP_0}$. In particular, $1_{[0,x]}\in \cP_0$ for all $x\in \bR$, and the process $\{W(x)=W(1_{[0,x]});x \in \bR\}$ is a fBm of index $H$.

\medskip

We say that a process $u_{\theta}=\{u_{\theta}(t,x);t\geq 0,x \in \bR\}$ is a (mild Skorohod) {\bf solution} to equation \eqref{HAM} if it satisfies the following integral equation:
\[
u_{\theta}(t,x)=1+ \sqrt{\theta} \int_0^t \int_{\bR}G_{t-s}(x-y)u_{\theta}(s,y)W(\delta y)ds
\]
where the $W(\delta y)$ corresponds to the divergence operator $\delta$ (or Skorohod integral) defined in Section \ref{section-D} below, and $G_t$ is the fundamental solution of the wave equation on $\bR_{+} \times \bR$:
\begin{align}
\label{def-G}
	G_t(x) :=\dfrac{1}{2} 1_{\{ | x| < t\}}, \quad t>0,x\in \bR.
\end{align}
We let $G_t(x) =0$ when $t\leq 0$.
Note that the Fourier transform of $G_t$ is given by:
\begin{align}
\label{Fourier}
\cF G_t(\xi) = \int_{\bR}e^{-i\xi x} G_t(x)dx=\dfrac{\sin(t|\xi|)}{|\xi|}, \quad t\ge 0.
\end{align}

We are interested on the asymptotic behaviour as $R \to \infty$ of the spatial average:
\[
F_{R,\theta}(t)=\int_{-R}^R \big(u_{\theta}(t,x)-1\big)dx.
\]
More precisely, letting $\sigma_{R,\theta}^2(t)={\rm Var}\big(F_{R,\theta}(t)\big)$, we would like to show that
\[
\frac{F_{R,\theta}(t)}{\sigma_{R,\theta}(t)} \stackrel{d}{\to} Z \sim N(0,1) \quad \mbox{as $R \to \infty$},
\]
and to give an estimate for the speed of this convergence in the total variation distance:
\[
d_{\rm TV}(X,Y)=\sup_{B \in \cB(\bR)}|\bP(X \in B)-\bP(Y\in B)|.
\]

The following theorems are the main results of this article.

\begin{theorem}[Limiting Covariance]
\label{limit-cov}
For any $\theta>0$, $t>0$ and $s>0$
\begin{equation}
\label{lim-cov}
\lim_{R \to \infty}\frac{\bE[F_{R,\theta}(t) F_{R,\theta}(s)]}{R}=K_{\theta}(t,s),
\end{equation}
where $K_{\theta}(t,s)$ is finite and is given by \eqref{def-K} below. Moreover, for any $\theta>0$ and $t>0$,
\begin{equation}
\label{lim-cov1}
\lim_{R \to \infty}\frac{\sigma_{R,\theta}^2(t)}{R}=K_{\theta}(t,t)>0.
\end{equation}
\end{theorem}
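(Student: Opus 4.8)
The plan is to exploit the linearity of \eqref{HAM}, which yields an explicit Wiener chaos expansion of the solution, and then to reduce the covariance to a spectral integral to which a Fej\'er-type approximation of the identity can be applied. First I would expand $u_{\theta}(t,x)-1$ in Wiener chaos. Iterating the mild formulation gives
\[
u_{\theta}(t,x)-1=\sum_{n\ge 1}\theta^{n/2}I_n\big(f_n(\cdot,t,x)\big),
\]
where $I_n$ is the $n$-th multiple integral with respect to $W$ and $f_n(\cdot,t,x)$ is the symmetrization of an iterated product of wave kernels $G$. Because $G_{t-s}(x-y)$ is translation invariant, so is each kernel: $f_n(y_1,\ldots,y_n;t,x)=h_n(y_1-x,\ldots,y_n-x;t)$ for a function $h_n$ not depending on $x$. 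Integrating over $x\in[-R,R]$ and using the isometry of multiple integrals together with the orthogonality of distinct chaoses, I obtain
\[
\bE\big[F_{R,\theta}(t)F_{R,\theta}(s)\big]=\sum_{n\ge 1}\theta^n n!\,\Big\langle \int_{-R}^R f_n(\cdot,t,x)\,dx,\ \int_{-R}^R f_n(\cdot,s,x)\,dx\Big\rangle_{\cP_0^{\otimes n}}.
\]

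Next I would pass to the spectral domain. Taking Fourier transforms in the space variables and using translation invariance, $\cF\big(\int_{-R}^R f_n(\cdot,t,x)\,dx\big)(\xi_1,\ldots,\xi_n)=\big(\int_{-R}^R e^{-ix(\xi_1+\cdots+\xi_n)}dx\big)\cF h_n(\xi_1,\ldots,\xi_n;t)$, so that the inner-product weight $\prod_j|\xi_j|^{1-2H}$ and the factor
\[
\Big(\int_{-R}^R e^{-ix(\xi_1+\cdots+\xi_n)}\,dx\Big)^2=\frac{4\sin^2\!\big(R(\xi_1+\cdots+\xi_n)\big)}{(\xi_1+\cdots+\xi_n)^2}
\]
appear. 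Here the explicit value $\cF G_t(\xi)=\sin(t|\xi|)/|\xi|$ from \eqref{Fourier} makes $\cF h_n$ computable as a time-simplex integral of products of such factors. Dividing by $R$, the classical fact that $\tfrac1R\tfrac{4\sin^2(Ra)}{a^2}$ converges to $4\pi$ times the Dirac mass at $0$ (since $\int_{\bR}\tfrac{\sin^2(Ra)}{a^2}da=\pi R$) suggests that, upon fixing $u=\xi_1+\cdots+\xi_n$ as a new variable and letting $R\to\infty$, the mass concentrates on the hyperplane $\xi_1+\cdots+\xi_n=0$. This produces the candidate limit
\[
K_{\theta}(t,s)=4\pi\sum_{n\ge 1}\theta^n n!\,c_H^n\int_{\{\xi_1+\cdots+\xi_n=0\}}\cF h_n(\xi;t)\,\overline{\cF h_n(\xi;s)}\prod_{j=1}^n|\xi_j|^{1-2H}\,d\xi,
\]
which I would then match with the expression \eqref{def-K}.

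The main obstacle is justifying this passage to the limit. I would fix $n$, set $\Phi_n^{t,s}(u):=\int_{\{\xi_1+\cdots+\xi_n=u\}}\cF h_n(\xi;t)\overline{\cF h_n(\xi;s)}\prod_j|\xi_j|^{1-2H}$ (parametrized by $\xi_n=u-(\xi_1+\cdots+\xi_{n-1})$ over $\bR^{n-1}$), and show that $\Phi_n^{t,s}$ is continuous at $u=0$ and that $\tfrac1R\int_{\bR}\tfrac{4\sin^2(Ru)}{u^2}\Phi_n^{t,s}(u)\,du\to 4\pi\Phi_n^{t,s}(0)$. The integrability needed here, as well as the finiteness of $\Phi_n^{t,s}(0)$, is exactly where the condition $H>1/4$ enters, since the weight $|\xi_j|^{1-2H}$ grows at infinity. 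To interchange the limit with the sum $\sum_{n\ge1}$, I would produce an $R$-uniform summable dominating bound for the $n$-th term, using the delta-initial-velocity representation of the wave kernel (announced in the abstract) to control $\cF h_n$ and the resulting simplex integrals; the gain of a factorial-type factor from the time simplex compensates the $n!$ from the isometry and makes the series converge for $H\in(1/4,1/2)$. This uniform bound, a kernel/moment estimate summable in $n$, is the technical heart of the argument.

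Finally, finiteness of $K_{\theta}(t,s)$ follows from the convergence of this series. For $s=t$ every term is nonnegative, so $K_{\theta}(t,t)\ge 0$; to see $K_{\theta}(t,t)>0$ I note that the $n=1$ term vanishes (the constraint $\xi_1=0$ meets the weight $|\xi_1|^{1-2H}=0$ since $1-2H>0$), but the $n=2$ term equals
\[
8\pi\theta^2 c_H^2\int_{\bR}|\cF h_2(\xi_1,-\xi_1;t)|^2\,|\xi_1|^{2(1-2H)}\,d\xi_1,
\]
which is strictly positive because $\xi_1\mapsto\cF h_2(\xi_1,-\xi_1;t)$ does not vanish identically. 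Hence $\sigma_{R,\theta}^2(t)/R\to K_{\theta}(t,t)>0$.
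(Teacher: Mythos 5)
Your proposal is correct and follows essentially the same route as the paper's proof: chaos expansion plus the Wiener isometry, reduction to a spectral integral against the Fej\'er kernel $\ell_R(u)=\sin^2(Ru)/(\pi R u^2)$ acting as an approximation of the identity so that the limit concentrates on the hyperplane $\xi_1+\cdots+\xi_n=0$, vanishing of the first-chaos contribution, an $R$-uniform summable bound coming from the time simplex to justify exchanging the limit with the sum over $n$, and strict positivity read off from the $n=2$ term. The one step you assert without verification --- that $\cF h_2(\xi_1,-\xi_1;t)$ is not identically zero --- is exactly what the paper settles by the explicit computation $H(-\xi_1,\xi_1)=\big(t-\sin(t|\xi_1|)/|\xi_1|\big)^2\,|\xi_1|^{-4H-2}>0$ for almost every $\xi_1$.
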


\begin{theorem}[QCLT]
\label{QCLT}
For any $\theta>0$ and $t>0$,
\[
d_{\rm TV}\left( \frac{F_{R,\theta}(t)}{\sigma_{R,\theta}(t)},Z\right) \leq C_{t,H,\theta} R^{-1/2},
\]
where $C_{t,H,\theta}>0$ is a constant depending on $(t,H,\theta)$ and $Z \sim N(0,1)$.
\end{theorem}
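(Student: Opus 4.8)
The plan is to combine Malliavin calculus with Stein's method, in the form of the second-order Gaussian Poincar\'e inequality (Proposition 2.4 of \cite{NXZ}), applied to the centered functional $F=F_{R,\theta}(t)$. This reduces the estimate of $d_{\rm TV}(F_{R,\theta}(t)/\sigma_{R,\theta}(t),Z)$ to a bound of the form
\[
d_{\rm TV}\!\left(\frac{F_{R,\theta}(t)}{\sigma_{R,\theta}(t)},Z\right)\le \frac{C}{\sigma_{R,\theta}^2(t)}\sqrt{\mathcal A_R},
\]
where $\mathcal A_R$ is an integral over several copies of $\bR$, weighted by the covariance kernel of $\cP_0$, of fourth moments of the first Malliavin derivative $DF_{R,\theta}(t)$ and the second Malliavin derivative $D^2F_{R,\theta}(t)$. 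Since $\cP_0$ is a rough space with $H<1/2$, its norm is comparable to a Gagliardo-type seminorm that weights increments by $|z-z'|^{-(2-2H)}$; consequently what must genuinely be controlled are fourth moments of the increments of $Du_\theta$ and of the rectangular increments of $D^2u_\theta$ in the noise variable.

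I would first dispose of the normalization. By Theorem \ref{limit-cov}, $\sigma_{R,\theta}^2(t)/R\to K_\theta(t,t)>0$, so $\sigma_{R,\theta}^2(t)\ge cR$ for all large $R$ and hence $\sigma_{R,\theta}^{-2}(t)\le CR^{-1}$. It therefore suffices to show that $\mathcal A_R=O(R)$, so that $\sqrt{\mathcal A_R}=O(R^{1/2})$ and the product yields the advertised rate $R^{-1/2}$. The improvement from $O(R)$—which a first-order argument would give—to $O(R^{1/2})$ in the numerator is precisely what the second-order inequality provides.

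Next I would open up the derivatives, $D_zF_{R,\theta}(t)=\int_{-R}^{R}D_zu_\theta(t,x)\,dx$ and $D^2_{z,z'}F_{R,\theta}(t)=\int_{-R}^{R}D^2_{z,z'}u_\theta(t,x)\,dx$. A decisive structural feature of the wave kernel is finite speed of propagation: $G_t(x)=\tfrac12 1_{\{|x|<t\}}$ is supported in $|x|<t$, so $D_zu_\theta(t,x)$ and $D^2_{z,z'}u_\theta(t,x)$ vanish unless $z$ (resp. $z,z'$) lies within distance $t$ of $x$. This localizes the $z$-variables to within $O(1)$ of $x$ and keeps $|z-z'|$ bounded, so that after integrating $x$ over $[-R,R]$ essentially one spatial degree of freedom survives, producing the factor $R$ in $\mathcal A_R$. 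To evaluate the relevant fourth moments I would use the moment-comparison (hypercontractivity) bound of Appendix \ref{section-hyper} to reduce $p$-th moments of $u_\theta$ and its derivatives to \emph{second} moments of the corresponding quantities for $u_{(p-1)\theta}$, which are then handled explicitly through their Wiener chaos expansions.

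The hard part, and the genuinely new ingredient compared with the parabolic case of \cite{NXZ}, is the family of increment estimates of the type $\bE|D_zu_\theta(t,x)-D_{z'}u_\theta(t,x)|^4\lesssim |z-z'|^{\beta}$ (and the analogous rectangular-increment bounds for $D^2u_\theta$), with $\beta$ large enough that integration against $|z-z'|^{-(2-2H)}$ converges. In the parabolic model one exploits the smoothness and decay of the heat kernel, but here $\cF G_t(\xi)=\sin(t|\xi|)/|\xi|$ does not decay at infinity, so those arguments do not transfer. The plan is to exploit the connection with the wave equation with delta initial velocity: $D_zu_\theta(t,x)$ solves a linear equation whose source $\sqrt{\theta}\int_0^t G_{t-s}(x-z)u_\theta(s,z)\,ds$ is exactly the wave response to a Dirac initial velocity located at $z$, so the dependence of $D_zu_\theta$ on $z$—and thus its increments—can be read off from the regularity of the wave propagator in its initial data. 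Pushing this through for $D^2u_\theta$ and then assembling all the resulting spatial integrals to extract precisely the factor $R$ is the technical core, and I expect it to be the main obstacle.
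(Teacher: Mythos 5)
Your plan follows essentially the same route as the paper's proof: the second-order Gaussian Poincar\'e inequality of \cite{NXZ} for time-independent noise, the lower bound $\sigma_{R,\theta}^2(t)\gtrsim R$ from Theorem \ref{limit-cov}, the $\theta$-parametrized hypercontractive moment comparison of Appendix \ref{section-hyper}, and the reduction of the increments of $Du_{\theta}$ and the rectangular increments of $D^2u_{\theta}$ to increments of $u_{\theta}$ and of the delta-initial-velocity solution $v_{\theta}$. The one caveat is that your sketch leaves the technical core unexecuted and slightly misattributes the source of the single factor $R$: in the paper it is extracted via the translation-invariance of the increments of $u_{\theta}$ and $v_{\theta}$ (Lemma \ref{translation-lemma}, Remark \ref{trans-rem}) combined with the three-function Cauchy--Schwarz inequality of Lemma \ref{Lem-Cauchy-Schwarz-3}, with finite speed of propagation entering only through the identity $v_{\theta}^{(r,z)}(t,x)=2G_{t-r}(x-z)v_{\theta}^{(r,z)}(t,x)$, while the needed integrated increment bounds for $v_{\theta}$ are obtained not by a direct pointwise H\"older estimate but by a further comparison with the white-in-time model $V_{\theta}$ and the stochastic Volterra equations of Appendix \ref{section-appA}.
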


\begin{theorem}[FCLT]
\label{FCLT}
For any $\theta>0$, the process $\{R^{-1/2} F_{R,\theta}(t); t\geq 0\}$ has a continuous modification which converges in distribution in $C([0,\infty))$ as $R \to \infty$, to a zero-mean Gaussian process $\{\cG_{\theta}(t);t\geq 0\}$ with covariance
\[
\bE[\cG_{\theta}(t)\cG_{\theta}(s)]=K_{\theta}(t,s),
\]
where $C([0,\infty))$ is equipped with the topology of uniform convergence on compact sets.
\end{theorem}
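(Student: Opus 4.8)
The plan is to establish the two standard ingredients of weak convergence in $C([0,\infty))$ with the topology of uniform convergence on compacts: convergence of the finite-dimensional distributions (f.d.d.) of $R^{-1/2}F_{R,\theta}(\cdot)$ to those of the Gaussian process $\cG_{\theta}$, together with tightness of the family on every compact interval $[0,T]$. Since convergence in $C([0,\infty))$ is equivalent to convergence in $C([0,T])$ for every $T>0$, it suffices to fix a horizon $T$ and combine the two ingredients via Prokhorov's theorem, the f.d.d. limit identifying the subsequential weak limit uniquely as $\cG_{\theta}$.

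For the f.d.d., fix $0\le t_1 < \cdots < t_d$ and apply the Cram\'er--Wold device to reduce convergence of the vector $R^{-1/2}(F_{R,\theta}(t_1),\dots,F_{R,\theta}(t_d))$ to that of the scalar $S_R:=R^{-1/2}\sum_{j=1}^d a_j F_{R,\theta}(t_j)$ for arbitrary $a_1,\dots,a_d\in\bR$. By Theorem \ref{limit-cov} and bilinearity of the covariance, ${\rm Var}(S_R)\to \sigma_\infty^2:=\sum_{i,j}a_i a_j K_{\theta}(t_i,t_j)$. I would then apply the same second-order Gaussian Poincar\'e inequality (Proposition 2.4 of \cite{NXZ}) used to prove Theorem \ref{QCLT}, now to the single random variable $S_R$: since $S_R$ is a fixed linear combination, the increment estimates for the first and second Malliavin derivatives $Du_{\theta}$ and $D^2u_{\theta}$ already obtained for each $F_{R,\theta}(t_j)$ combine linearly, yielding $d_{\rm TV}(S_R/\sigma_\infty,Z)\to 0$ and hence the claimed Gaussian f.d.d. with covariance $K_{\theta}$.

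The harder half is tightness, for which I would use the Kolmogorov--Chentsov criterion: it is enough to produce, for some $p>2$ and some exponent $\kappa>1$, a bound $\bE\big[|R^{-1/2}(F_{R,\theta}(t)-F_{R,\theta}(s))|^{p}\big]\le C_{T,p}\,|t-s|^{\kappa}$, uniform in $R$ and in $s,t\in[0,T]$; this simultaneously produces a continuous modification of $R^{-1/2}F_{R,\theta}(\cdot)$ for each $R$ and the required tightness on $[0,T]$. Writing the increment as $\int_{-R}^R\big(u_{\theta}(t,x)-u_{\theta}(s,x)\big)dx$, I would first invoke the hypercontractivity property of Appendix \ref{section-hyper} to dominate the $p$-th moment by the second moment of the corresponding quantity for the solution $u_{(p-1)\theta}$, and then expand in Wiener chaos. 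The rough spatial covariance $|\xi|^{1-2H}$ with $H\in(\tfrac14,\tfrac12)$ produces a factor of order $R^{p/2}$ from the spatial integration, cancelling the normalization $(R^{-1/2})^p$, while the temporal increments $G_{t-\cdot}-G_{s-\cdot}$ of the wave kernel --- analyzed through the connection with the wave equation with delta initial velocity --- should supply a H\"older factor $|t-s|^{\gamma p}$ for some $\gamma>0$, so that choosing $p$ large enough forces $\kappa=\gamma p>1$.

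The main obstacle is precisely this uniform-in-$R$ temporal increment estimate: controlling high moments of the time increments of the spatial average when the noise is rough in space is delicate, because the Fourier-side weight $|\xi|^{1-2H}$ degrades integrability and the kernel $G_t$ is only a bounded indicator rather than a smooth function, so the Fourier transforms of the kernel increments must be estimated sharply. These are the same technical difficulties underlying the moment estimates for $Du_{\theta}$ and $D^2u_{\theta}$, and I expect the bulk of the work to lie in combining the chaos expansion, the hypercontractivity reduction, and careful bounds on $\cF(G_{t-\cdot}-G_{s-\cdot})$. Once this estimate is in hand, Kolmogorov--Chentsov yields the continuous modifications and tightness on $[0,T]$, the f.d.d. convergence identifies the limit as $\cG_{\theta}$, and letting $T\to\infty$ gives convergence in $C([0,\infty))$.
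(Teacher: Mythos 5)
Your proposal follows essentially the same route as the paper: tightness via a uniform-in-$R$ moment bound on the time increments of $R^{-1/2}F_{R,\theta}$ obtained from the chaos expansion and hypercontractivity (the paper in fact derives the Lipschitz-in-time bound $\|F_{R,\theta}(t)-F_{R,\theta}(s)\|_p \leq C R^{1/2}(t-s)$ by a direct Fourier estimate on $\cF(G_{t-\cdot}-G_{s-\cdot})$ and $\cF 1_{[-R,R]}$, so Kolmogorov--Chentsov applies already with $p=2$ and no large-$p$ bootstrapping or delta-initial-velocity argument is needed), and finite-dimensional convergence via the Malliavin--Stein machinery reusing the QCLT estimates. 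The only cosmetic difference is that the paper runs the f.d.d.\ step through the multivariate criterion ${\rm Var}\big(\langle DF_R(t_i),-DL^{-1}F_R(t_j)\rangle_{\cP_0}\big)\leq CR$ (as in \cite{BY}) rather than Cram\'er--Wold plus the scalar second-order Poincar\'e inequality; both reduce to the same sextuple-integral bound $\cA \leq CR$.
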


For the proof of Theorem \ref{limit-cov} we use the Wiener chaos decomposition of $F_{R,\theta}$, by observing that the projection on the first chaos space does not contribute to the limit, while the projections on the other chaos spaces give rise to a convergent series. A similar phenomena has been observed in \cite{NSZ} for (pAm) with rough noise in space, that is colored in time.  This is in contrast with the QCLT for equations with ``regular'' noise in space (studied in \cite{NZ20,NXZ,BY}) for which only the projection on the first chaos space contributes to the limit. In both cases, the QCLT is {\em non-chaotic}, in the sense described on page 3 of \cite{DNZ}, which means that not all the projections on the chaos spaces contribute to the limit.

Theorem \ref{QCLT} follows by applying a version of the second-order Gaussian Poincar\'e inequality
(Proposition 2.4 of \cite{NXZ}) for the time-independent noise. This leads to a study of the increments of $Du_{\theta}$, as well as the rectangular increments of $D^2u_{\theta}$. Analyzing these increments requires significant effort. For this task, we use a different method than in \cite{NXZ} for (pAm). In addition to the moment comparison technique mentioned above, we develop a connection with the solution $v_{\theta}$ of (hAm) with delta initial velocity. This introduces a new problem, which leads us to examine the increments of $u_{\theta}$ and $v_{\theta}$. A key role in this analysis is played by some translation-invariance properties of these processes. In addition, we exploit the fact that the increments of $v_{\theta}$ are closely related to those of the solution $V_{\theta}$ of (hAm) driven by a Gaussian noise $\fX$, which is {\em white in time} and fractional in space with the same index $H \in (\frac{1}{4},\frac{1}{2})$. The covariance of $\fX$ is given by \eqref{cov-fX} below.

For the proof of Theorem \ref{FCLT}, as usually, we need to show two things : (i) tightness; and (ii) finite-dimensional convergence. (i) follows by Kolmogorov-Centsov theorem, while (ii) follows using the same method as in \cite{BY}, based on a variance estimation that is proved using the same arguments as for the QCLT.

\medskip

This article is organized as follows. In Section \ref{section-prelim}, we include the background and some preliminary results from Malliavin calculus, and we prove the existence of the solution $u_{\theta}$. In Section \ref{section-wave-delta}, we examine some properties of $v_{\theta}$ and $V_{\theta}$.
In Section \ref{section-Malliavin}, we derive some estimates for the increments of $Du_{\theta}$ and the rectangular increments of $D^2u_{\theta}$. The increments of the processes $u_{\theta}$ and $v_{\theta}$ are studied in Section \ref{section-increm}. Finally, in Section \ref{section-proofs}, we present the proofs of Theorems \ref{limit-cov}, \ref{QCLT} and \ref{FCLT}.
The moment comparison result for a general SPDE is included
in Appendix \ref{section-hyper}. In Appendix \ref{section-appA}, we study two stochastic Volterra equations driven by $\fX$, which are of the same form as the equations satisfied by the increments of $V_{\theta}$, and allow us to derive some properties of these increments. Appendix \ref{section-appC-ineq} contains some elementary inequalities which are used in the sequel.

\section{Preliminaries}
\label{section-prelim}

In this section, we include some preliminary results related to Malliavin calculus, and we prove the existence and uniqueness of the solution to \eqref{HAM}.
We let $\|\cdot\|_p$ be the $L^p(\Omega)$-norm.

\subsection{Multiple Wiener integrals}

We start by introduce some basic ingredients of Malliavin calculus, following \cite{nualart06}.

We denote by $J_n$ the projection from $L^2(\Omega)$ to $\cP_{0,n}$, where $\cP_{0,n}$ is the $n$-th Wiener chaos space, defined at the closed linear span in $L^2(\Omega)$ of $\{H_n(W(\varphi);\varphi \in \cP_0)\}$, with $H_n(x)$ being the $n$-th Hermite polynomial.
Let $I_n:\cP_0^{\otimes n} \to \cP_{0,n}$ be
the multiple Wiener integral of order $n$ with respect to $W$,
where $\cP_0^{\otimes n}$ is the $n$-th tensor product of $\cP_0$. Since $I_n$ is surjective, for any $F \in L^2(\Omega)$, $J_n(F)=I_n(f_n)$
for some $f_n \in \cP_0^{\otimes n}$.

The Wiener chaos spaces $\cP_{0,n}$ are orthogonal in $L^2(\Omega)$. More precisely, for any $f \in \cP_{0}^{\otimes n}$ and $g \in \cP_{0}^{\otimes m}$,
\[
 E[I_n(f)I_m(g)]=\left\{
\begin{array}{ll} n! \, \langle \widetilde{f}, \widetilde{g} \rangle_{\cP_{0}^{\otimes n}} & \mbox{if $n=m$} \\
0 & \mbox{if $n \not=m$}
\end{array} \right.
\]
where $\widetilde{f}$ is the symmetrization of $f$ in all $n$
variables:
$$\widetilde{f}(x_1,\ldots,x_n)=\frac{1}{n!}\sum_{\rho \in S_n}f(x_{\rho(1)},\ldots,x_{\rho(n)}),$$
and $S_n$ is the set of all permutations of $\{1,
\ldots,n\}$. Note that $I_n(\widetilde f)=I_n(f)$ and
\begin{equation}
\label{rough-bound}
\|\widetilde f\|_{\cP_0^{\otimes n}} \leq \|f\|_{\cP_0^{\otimes n}}.
\end{equation}

Every random variable $F \in L^2(\Omega)$
which is measurable with respect to $W$ has the {\em Wiener chaos expansion}:
\begin{equation}
\label{F-chaos}
F=E(F)+\sum_{n \geq 1}J_n(F) =E(F)+\sum_{n \geq 1}I_n(f_n),
\end{equation}
where the series converges in $L^2(\Omega)$.
Moreover,
$$E|F|^2=\sum_{n \geq 0}E|I_n(f_n)|^2=\sum_{n \geq 0}n! \, \|\widetilde{f}_n\|_{\cP_0^{\otimes n}}^{2},$$
where $I_0$ is the identity map on $\bR$ and $f_0=E(F)$. Occasionally, we denote $J_0(F)=\bE(F)$.

\medskip

In general, $I_{p+q} (f\otimes g) \not= I_p(f) I_q(g)$. The exact expression of $I_{p+q} (f\otimes g)$ is given by {\em the product formula}, which will not be used in the present paper. Instead, we use the following result.

\begin{lemma}
\label{Lemma-decomp-I}
Let $p,q \in \bN$.
\begin{enumerate}
	\item[(a)] For $f \in \cP_0^{\otimes p}$, $g \in \cP_0^{\otimes q}$, we have
	\begin{align*}
		\left\| I_{p+q}(f \otimes g) \right\|_2^2
		\le \dfrac{(p+q)!}{p!q!} \left\| I_p(f) \right\|_2^2 \left\| I_q(g) \right\|_2^2.
	\end{align*}

	\item[(b)] For $f_1 \in \cP_0^{\otimes p_1}$, $f_2 \in \cP_0^{\otimes p_2}$ and $f_3 \in \cP_0^{\otimes p_3}$, we have
	\begin{align*}
		\| I_{p_1+p_2+p_3}(f_1 \otimes f_2 \otimes f_3) \|_2^2 \le \dfrac{(p_1+p_2+p_3)!}{p_1!p_2!p_3!} \| I_{p_1}(f_1) \|_2^2 \| I_{p_2}(f_2) \|_2^2 \| I_{p_3}(f_3) \|_2^2.
	\end{align*}
\end{enumerate}
\end{lemma}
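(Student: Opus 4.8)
The plan is to reduce everything to the two facts recorded just above the statement: the isometry formula $\|I_n(h)\|_2^2 = n!\,\|\widetilde h\|_{\cP_0^{\otimes n}}^2$ for $h \in \cP_0^{\otimes n}$, and the contraction bound \eqref{rough-bound}, namely $\|\widetilde h\|_{\cP_0^{\otimes n}} \le \|h\|_{\cP_0^{\otimes n}}$. The only structural observation I need is that full symmetrization subsumes partial symmetrization: since $S_p \times S_q$ sits inside $S_{p+q}$ as the subgroup fixing the splitting of the coordinates into the first $p$ and the last $q$, averaging over $S_{p+q}$ after averaging over $S_p \times S_q$ is the same as averaging over $S_{p+q}$ alone. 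In other words,
\[
\widetilde{f \otimes g} = \widetilde{\widetilde f \otimes \widetilde g}.
\]
This exact identity is what lets me match the (non-symmetric) left-hand side with the quantities $I_p(f)$ and $I_q(g)$ on the right, which see $f$ and $g$ only through their symmetrizations.

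For part (a), I would start from the isometry formula applied to $I_{p+q}(f \otimes g)$, then invoke the displayed identity and the contraction bound in the favourable direction (the left-hand side is being bounded from above, so losing a factor in $\|\widetilde h\| \le \|h\|$ is harmless), together with the multiplicativity of the tensor-product norm, $\|\widetilde f \otimes \widetilde g\|_{\cP_0^{\otimes(p+q)}} = \|\widetilde f\|_{\cP_0^{\otimes p}}\|\widetilde g\|_{\cP_0^{\otimes q}}$:
\begin{align*}
	\big\| I_{p+q}(f \otimes g) \big\|_2^2
	&= (p+q)!\,\big\| \widetilde{f \otimes g} \big\|_{\cP_0^{\otimes(p+q)}}^2
	= (p+q)!\,\big\| \widetilde{\widetilde f \otimes \widetilde g} \big\|_{\cP_0^{\otimes(p+q)}}^2 \\
	&\le (p+q)!\,\big\| \widetilde f \otimes \widetilde g \big\|_{\cP_0^{\otimes(p+q)}}^2
	= (p+q)!\,\| \widetilde f \|_{\cP_0^{\otimes p}}^2\,\| \widetilde g \|_{\cP_0^{\otimes q}}^2.
\end{align*}
Finally I would substitute $\|\widetilde f\|_{\cP_0^{\otimes p}}^2 = \|I_p(f)\|_2^2/p!$ and $\|\widetilde g\|_{\cP_0^{\otimes q}}^2 = \|I_q(g)\|_2^2/q!$, which rearranges precisely into the claimed bound with prefactor $(p+q)!/(p!\,q!)$.

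For part (b), the cleanest route is simply to iterate part (a). Applying (a) to the pair $(f_1 \otimes f_2,\, f_3) \in \cP_0^{\otimes(p_1+p_2)} \times \cP_0^{\otimes p_3}$ gives a factor $(p_1+p_2+p_3)!/\big((p_1+p_2)!\,p_3!\big)$ multiplying $\|I_{p_1+p_2}(f_1 \otimes f_2)\|_2^2\,\|I_{p_3}(f_3)\|_2^2$, and a second application of (a) to $I_{p_1+p_2}(f_1 \otimes f_2)$ produces the factor $(p_1+p_2)!/(p_1!\,p_2!)$; the $(p_1+p_2)!$ terms cancel and leave the multinomial $(p_1+p_2+p_3)!/(p_1!\,p_2!\,p_3!)$. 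Alternatively, the same three-line computation as in (a) carries over verbatim, using $\widetilde{f_1 \otimes f_2 \otimes f_3} = \widetilde{\widetilde f_1 \otimes \widetilde f_2 \otimes \widetilde f_3}$ and the multiplicativity of the tensor norm over three factors.

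I do not anticipate a genuine obstacle here; the one place demanding care is the \emph{direction} of the symmetrization step. The contraction \eqref{rough-bound} points the ``wrong'' way for equalities, so it is essential that it be used only to bound the left-hand side from above, and that the passage from $f \otimes g$ to $\widetilde f \otimes \widetilde g$ be an exact identity (via the subgroup remark) rather than an inequality; otherwise the combinatorial constant would not come out as the exact multinomial coefficient.
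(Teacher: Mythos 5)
Your proposal is correct and follows essentially the same route as the paper: both pass from $f\otimes g$ to $\widetilde f\otimes\widetilde g$ via the observation that they share the same symmetrization, apply the isometry and the contraction bound \eqref{rough-bound} in the upper-bound direction, and use that the $\cP_0^{\otimes(p+q)}$-norm of the simple tensor $\widetilde f\otimes\widetilde g$ factorizes (the paper verifies this factorization explicitly through the Fourier representation of the inner product, whereas you invoke it as the general multiplicativity of Hilbert-space tensor norms — the same fact). Part (b) by iterating (a) is also exactly the paper's argument, up to an immaterial difference in how the three factors are grouped.
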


\begin{proof}
(a) Since $f \otimes g$ and $\tilde f \otimes \tilde g$ have the same symmetrization,
$I_{p+q} (\tilde{f} \otimes \tilde{g}) = I_{p+q}(f \otimes g)$.
Hence,
\begin{align*}
	&\left\| I_{p+q}(f \otimes g) \right\|_2^2
	= \left\| I_{p+q} (\tilde{f} \otimes \tilde{g}) \right\|_2^2
	= (p+q)! \left\| \widetilde{ \tilde{f} \otimes \tilde{g} } \right\|_{\cP_0^{\otimes (p+q)}}^2 \\
	\le& (p+q)! \left\| \tilde{f} \otimes \tilde{g} \right\|_{\cP_0^{\otimes (p+q)}}^2
	= (p+q)! \int_{\bR^{p+q}} \left| \cF \left( \tilde{f} \otimes \tilde{g} \right) (\xi_1, \ldots, \xi_{p+q}) \right|^2 \prod_{j=1}^{p+q} |\xi_j|^{1-2H} d\xi_j \\
	=& (p+q)! \int_{\bR^{p+q}} \left| \cF \tilde{f} (\xi_1, \ldots, \xi_p) \right|^2 \left| \cF \tilde{g} (\xi_{p+1}, \ldots, \xi_{p+q}) \right|^2 \prod_{j=1}^{p+q} |\xi_j|^{1-2H} d\xi_j \\
	=& \dfrac{(p+q)!}{p!q!} \| \tilde f \|_{\cP_0^{\otimes p}}^2 \| \tilde g \|_{\cP_0^{\otimes q}}^2
	= \dfrac{(p+q)!}{p!q!} \left\| I_p(f) \right\|_2^2 \left\| I_q(g) \right\|_2^2
\end{align*}

(b) We use (a) twice, the first time for $p=p_1$, $q=p_2+p_3$, $f=f_1$, $g=f_2 \otimes f_3$, and the second time for $p=p_2$,  $q=p_3$, $f=f_2$, $g=f_3$.
\end{proof}

Next, we extend Lemma \ref{Lemma-decomp-I} to processes indexed by elements in a measure space.

\begin{lemma}
\label{Lemma-decomp-F}
Let $(E,\cal E,\mu)$ be a measure space. For each $\theta>0$ and $r\in E$, let $F_{\theta}(r), G_{\theta}(r)$ be random variables in $L^2(\Omega)$ with the Wiener chaos expansions
$F_{\theta}(r) = \sum_{p\ge0} \theta^{p/2} I_p(f_p(\cdot,r))$ and
$G_{\theta}(r) = \sum_{q\ge0} \theta^{q/2} I_q(g_q(\cdot,r))$.
Then
\begin{align*}
	\sum_{p\ge0} \sum_{q\ge0} \theta^{p+q} \left\| I_{p+q} \left( \int_E f_p(\cdot,r) \otimes g_q(\cdot,r) \mu(dr) \right) \right\|_2^2
	\le \left( \int_E \left\| F_{2\theta}(r) \right\|_2 \left\| G_{2\theta}(r) \right\|_2 \mu(dr) \right)^2.
\end{align*}
Moreover, if for each $\theta>0$ and $r\in E$, $H_{\theta}(r)$ is a random variable in $L^2(\Omega)$ with the Wiener chaos expansion
$H_{\theta}(r) = \sum_{k\ge 0} \theta^{k/2} I_k(h_k(\cdot,r))$,
then
\begin{align*}
&\sum_{p\ge0} \sum_{q\ge0} \sum_{k\ge0} \theta^{p+q+k} \left\| I_{p+q+k} \left( \int_E f_p(\cdot,r) \otimes g_q(\cdot,r) \otimes h_k(\cdot,r) \mu(dr) \right) \right\|_2^2 \\
& \qquad \qquad \qquad \le \left( \int_E \left\| F_{3\theta}(r) \right\|_2 \left\| G_{3\theta}(r) \right\|_2 \left\| H_{3\theta}(r) \right\|_2 \mu(dr) \right)^2.
\end{align*}
\end{lemma}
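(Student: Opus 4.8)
The plan is to reduce the integrated statement to the pointwise (fixed-$r$) bound of Lemma~\ref{Lemma-decomp-I}, and then to convert the resulting double sum over chaos indices into a product of $L^2(\Omega)$-norms by a Cauchy--Schwarz argument carried out in the chaos order.

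First I would use the linearity of the multiple integral together with its boundedness (recall $\|I_n(f)\|_2 = \sqrt{n!}\,\|\widetilde f\|_{\cP_0^{\otimes n}} \le \sqrt{n!}\,\|f\|_{\cP_0^{\otimes n}}$) to commute $I_{p+q}$ with the Bochner integral over $E$, giving
$$
I_{p+q}\Big( \int_E f_p(\cdot,r) \otimes g_q(\cdot,r)\,\mu(dr)\Big) = \int_E I_{p+q}\big(f_p(\cdot,r)\otimes g_q(\cdot,r)\big)\,\mu(dr).
$$
Then Minkowski's integral inequality in $L^2(\Omega)$, followed by the pointwise estimate of Lemma~\ref{Lemma-decomp-I}(a), yields
$$
\Big\| I_{p+q}\Big(\int_E f_p\otimes g_q\,\mu(dr)\Big)\Big\|_2 \le \sqrt{\tfrac{(p+q)!}{p!\,q!}}\int_E \|I_p(f_p(\cdot,r))\|_2\,\|I_q(g_q(\cdot,r))\|_2\,\mu(dr).
$$
Writing $a_p(r)=\|I_p(f_p(\cdot,r))\|_2$ and $b_q(r)=\|I_q(g_q(\cdot,r))\|_2$, squaring, multiplying by $\theta^{p+q}$, and using the binomial bound $\frac{(p+q)!}{p!q!}=\binom{p+q}{p}\le 2^{p+q}$ gives
$$
\sum_{p,q\ge0}\theta^{p+q}\Big\|I_{p+q}\Big(\int_E f_p\otimes g_q\,\mu(dr)\Big)\Big\|_2^2 \le \sum_{p,q\ge0}(2\theta)^p(2\theta)^q\Big(\int_E a_p(r)b_q(r)\,\mu(dr)\Big)^2.
$$

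The decisive step is to expand each square as a double integral over $E\times E$ and interchange it with the sums over $p$ and $q$, so that the right-hand side equals
$$
\int_E\int_E \Big(\sum_{p\ge0}(2\theta)^p a_p(r)a_p(r')\Big)\Big(\sum_{q\ge0}(2\theta)^q b_q(r)b_q(r')\Big)\,\mu(dr)\mu(dr').
$$
Applying Cauchy--Schwarz to each inner sum, viewed as a weighted $\ell^2$ inner product of the sequences indexed by the chaos order, and recalling the orthogonality identity $\|F_{2\theta}(r)\|_2^2 = \sum_{p\ge0}(2\theta)^p a_p(r)^2$ (and similarly for $G$), the integrand is bounded by $\|F_{2\theta}(r)\|_2\|F_{2\theta}(r')\|_2\|G_{2\theta}(r)\|_2\|G_{2\theta}(r')\|_2$, whence the double integral factors as $\big(\int_E \|F_{2\theta}(r)\|_2\|G_{2\theta}(r)\|_2\,\mu(dr)\big)^2$, the desired bound.

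The three-factor inequality follows by the same scheme, now invoking Lemma~\ref{Lemma-decomp-I}(b) in place of part (a), replacing the binomial estimate by the multinomial bound $\frac{(p+q+k)!}{p!q!k!}\le 3^{p+q+k}$ (from expanding $(1+1+1)^{p+q+k}$), and carrying out the Cauchy--Schwarz step over the three chaos indices separately with weight $3\theta$. I expect the only genuinely delicate point to be the justification of the interchanges --- commuting $I_{p+q}$ with the Bochner integral and swapping the $E\times E$ integral with the infinite sums --- which is handled by the boundedness of $I_n$ together with Tonelli's theorem applied to the nonnegative quantities $a_p(r)b_q(r)$; the combinatorial and Cauchy--Schwarz steps themselves are elementary.
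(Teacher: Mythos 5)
Your proof is correct and follows essentially the same route as the paper's: both rest on Lemma \ref{Lemma-decomp-I}, the bound $\frac{(p+q)!}{p!q!}\le 2^{p+q}$ (resp.\ $\frac{(p+q+k)!}{p!q!k!}\le 3^{p+q+k}$), an expansion of the square as a double integral over $E\times E$, and a concluding Cauchy--Schwarz step. The only organizational difference is that you apply Minkowski's integral inequality and the pointwise Lemma \ref{Lemma-decomp-I}(a) before expanding the square, so you work with the nonnegative quantities $\|I_p(f_p(\cdot,r))\|_2$ throughout and do Cauchy--Schwarz in the chaos index, whereas the paper computes $\bigl\| \int_E \tilde f_p(\cdot,r)\otimes\tilde g_q(\cdot,r)\,\mu(dr)\bigr\|_{\cP_0^{\otimes(p+q)}}^2$ exactly in Fourier space and invokes Cauchy--Schwarz only at the very end on $\bE[F_{2\theta}(r_1)F_{2\theta}(r_2)]$; both variants yield the same bound.
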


\begin{proof}
We use the same idea as in the proof of Lemma \ref{Lemma-decomp-I}.(a). More precisely,
\begin{align*}
&\left\| I_{p+q} \left( \int_E f_p(\cdot,r) \otimes g_q(\cdot,r) \mu(dr) \right) \right\|_2^2
	= \left\| I_{p+q} \left( \int_E \tilde f_p(\cdot,r) \otimes \tilde g_q(\cdot,r) \mu(dr) \right) \right\|_2^2 \\
& \quad	\leq (p+q)! \left\| \int_E \tilde f_p(\cdot,r) \otimes \tilde g_q(\cdot,r) \mu(dr) \right\|_{\cP_0^{\otimes (p+q)}} \\
& \quad = (p+q)! \int_{\bR^{p+q}} \left| \int_E \cF \big( \tilde f_p(\cdot,r) \otimes \tilde g_q(\cdot,r) \big) (\xi_1,\ldots,\xi_{p+q}) dr \right|^2 \prod_{j=1}^{p+q} |\xi_j|^{1-2H} d\xi_j \\
& \quad = (p+q)! \int_{\bR^{p+q}} \int_{E^2} \cF \tilde f_p(\cdot,r_1) (\xi_1,\ldots,\xi_p) \cF \tilde g_q(\cdot,r_1) (\xi_{p+1},\ldots,\xi_{p+q}) \\
& \quad \quad \quad \times \overline{\cF \tilde f_p(\cdot,r_2) (\xi_1,\ldots,\xi_p)} \overline{\cF \tilde g_q(\cdot,r_2) (\xi_{p+1},\ldots,\xi_{p+q})} \mu(dr_1)\mu(dr_2)
	\prod_{j=1}^{p+q} |\xi_j|^{1-2H} d\xi_j \\
& \quad = (p+q)! \int_{E^2} \left\langle \tilde f_p(\cdot,r_1), \tilde f_p(\cdot,r_2) \right\rangle_{\cP_0^{\otimes p}} \left\langle \tilde g_q(\cdot,r_1), \tilde g_q(\cdot,r_2) \right\rangle_{\cP_0^{\otimes q}} \mu(dr_1)\mu(dr_2) \\
& \quad = \dfrac{(p+q)!}{p!q!} \int_{E^2} \bE\left[ I_p \left( f_p(\cdot,r_1) \right) I_p \left( f_p(\cdot,r_2) \right) \right] \bE\left[ I_q \left( g_q(\cdot,r_1) \right) I_q \left( g_q(\cdot,r_2) \right) \right] \mu(dr_1)\mu(dr_2).
\end{align*}
Noting that $\frac{(p+q)!}{p!q!} \le 2^{p+q}$, we obtain:
\begin{align*}
	& \sum_{p\ge1} \sum_{q\ge1} \theta^{p+q} \left\| I_{p+q} \left( \int_E f_p(\cdot,r) \otimes g_q(\cdot,r) \mu(dr) \right) \right\|_2^2 \\
	\le& \int_{E^2} \sum_{p\ge1} (2\theta)^p \bE\left[ I_p \left( f_p(\cdot,r_1) \right) I_p \left( f_p(\cdot,r_2) \right) \right] \sum_{q\ge1} (2\theta)^q \bE\left[ I_q \left( g_q(\cdot,r_1) \right) I_q \left( g_q(\cdot,r_2) \right) \right] dr_1dr_2 \\
	=& \int_{E^2} \bE \left[ F_{2\theta}(r_1) F_{2\theta}(r_2) \right] \bE \left[ G_{2\theta}(r_1) G_{2\theta}(r_2) \right] \mu(dr_1)\mu(dr_2)
	\le \left( \int_E \left\| F_{2\theta}(r) \right\|_2 \left\| G_{2\theta}(r) \right\|_2 \mu(dr) \right)^2,
\end{align*}
where we used the Cauchy-Schwarz inequality for the last inequality.

The second estimate is proved in a similar way, using the inequality $\frac{(p+q+k)!}{p!q!k!} \le 3^{p+q+k}$.
\end{proof}

\subsection{The operators $D$ and $\delta$}
\label{section-D}

In this section, we recall briefly the definitions of the Malliavin derivative $D$ and the divergence operator $\delta$. We refer the reader to \cite{nualart06} for more details.

Let $\cS$ be the class of ``smooth'' random variables, i.e variables of the form
\begin{equation}
\label{form-F}F=f(W(\varphi_1),\ldots, W(\varphi_n)),
\end{equation} where $f \in C_{b}^{\infty}(\bR^n)$, $\varphi_i \in \cP_0$, $n \geq 1$, and
$C_b^{\infty}(\bR^n)$ is the class of bounded $C^{\infty}$-functions
on $\bR^n$, whose partial derivatives of all orders are bounded.
The
{\em Malliavin derivative} of a random variable $F$ of the form (\ref{form-F}) is the
$\cP_0$-valued random variable given by:
$$DF:=\sum_{i=1}^{n}\frac{\partial f}{\partial x_i}(W(\varphi_1),\ldots,
W(\varphi_n))\varphi_i.$$ We endow $\cS$ with the norm
$\|F\|_{\bD^{1,2}}:=(E|F|^2)^{1/2}+(E\|D F \|_{\cP_0}^{2})^{1/2}$. The
operator $D$ can be extended to the space $\bD^{1,2}$, the
completion of $\cS$ with respect to $\|\cdot \|_{\bD^{1,2}}$.

For any integer $k\geq 2$, we let $D^k$ be the Malliavin derivative of order $k$, whose domain is denoted by $\bD^{k,2}$.

If $F$ is a random variable in $L^2(\Omega)$ with the Wiener chaos expansion $F=\sum_{n\geq 0}I_n(f_n)=\sum_{n\geq 0}J_n(F)$ for some {\em symmetric} functions $f_n \in \cP_0^{\otimes n}$, then
\begin{equation}
\label{Mal-F}
D_x F=\sum_{n\geq 1}nI_{n-1}(f_n(\cdot,x)).
\end{equation}
In particular, $D_x J_n(F)=nI_{n-1}(f_n(\cdot,x))=J_{n-1}(D_x F)$ for any $n\geq 1$, and $D_x J_0(F)=0$. In general,
\begin{equation}
\label{D-k-J}
D^k J_n(F)=\left\{
\begin{array}{ll}
J_{n-k}(D^k F) & \mbox{if $n\geq k$} \\
0 & \mbox{if $n<k$}
\end{array} \right.
\end{equation}

\medskip

The {\em divergence operator} $\delta$ is the adjoint of
the operator $D$. The domain of $\delta$, denoted by $\mbox{Dom} \
\delta$, is the set of $u \in L^2(\Omega;\cP_0)$ such that
$$|E \langle DF,u \rangle_{\cH}| \leq c (E|F|^2)^{1/2}, \quad \forall F \in \bD^{1,2},$$
where $c$ is a constant depending on $u$. If $u \in {\rm Dom} \
\delta$, then $\delta(u)$ is the element of $L^2(\Omega)$
characterized by the following duality relation:
\begin{equation}
\label{duality}
E(F \delta(u))=E\langle DF,u \rangle_{\cP_0}, \quad
\forall F \in \bD^{1,2}.
 \end{equation}
In particular, $E(\delta(u))=0$. If $u \in \mbox{Dom} \ \delta$, we
use the notation
$$\delta(u)=\int_{\bR}u(x) W(\delta x),$$
and we say that $\delta(u)$ is the {\em Skorohod integral} of $u$ with respect to $W$.

\subsection{OU semigroup and hypercontractivity}

In this section, we review the definition and some properties of the Ornstein-Uhlenbeck semigroup.

The {\em Ornstein-Uhlenbeck (OU) semigroup} is the family $(T_{t})_{t \ge 0}$ of contraction operators on $L^2(\Omega)$ defined by:
\begin{align*}
	T_{t} (F) = \sum_{n\geq 0} e^{-n t} J_n(F).
\end{align*}

Using \eqref{D-k-J}, we derive that
\begin{align}
\label{Prop-OU}
	D^k \left( T_{t}(F) \right)
	= \sum_{n\geq 0} e^{-nt} D^k(J_n(F))
	= \sum_{n\geq k} e^{-nt} J_{n-k}(D^kF)
	= e^{-kt} T_{t} (D^kF).
\end{align}

The OU semigroup has a {\em hypercontractivity property}: for any $t>0$ and $F \in L^p(\Omega)$,
\begin{equation}
\label{OU-hyper}
\|T_t F\|_{q(t)} \leq \|F\|_p, \quad \mbox{where $q(t)=e^{2t}(p-1)+1>p>1$}.
\end{equation}
An important consequence of this property is that for any $1<p<q<\infty$, the norms $\|\cdot\|_p$ and $\|\cdot\|_q$ are equivalent on the same chaos space $\cP_{0,n}$. More precisely,
\begin{equation}
\label{hyper}
\|F\|_q \leq \left( \frac{q-1}{p-1}\right)^{n/2}\|F\|_p, \quad \mbox{for any $F \in \cP_{0,n}$ and $1<p<q$}.
\end{equation}

The generator of the Ornstein-Uhlenbeck semigroup is given by:
\begin{equation}
\label{def-L}
LF=\sum_{n\geq 1}n J_n(F).
\end{equation}
Its domain ${\rm Dom}\ L$ consists of random variables $F \in L^2(\Omega)$ for which the series in \eqref{def-L} converges in $L^2(\Omega)$.
Note that
$F \in {\rm Dom}\ L$ if and only if $F \in \bD^{1,2}$ and $DF \in {\rm Dom} \ \delta$, and in this case, $LF=-\delta (D F)$.

The {\em pseudo-inverse} of $L$ is the operator $L^{-1}$ defined by
\[
L^{-1}F=\sum_{n\geq 1}\frac{1}{n} J_n(F).
\]
For any $F \in \bD^{1,2}$ with $\bE(F)=0$, the process $u=-D L^{-1}F$ belongs to ${\rm Dom} \ \delta$
and
\begin{equation}
\label{F-deltaD}
F=\delta(-D L^{-1} F).
\end{equation}

\medskip

\subsection{Existence of solution}

In this section, we show that equation \eqref{HAM} has a unique solution.

It is known that, if it exists, the solution to equation \eqref{HAM} has the Wiener chaos expansion:
\begin{equation}
\label{series-conv-u}
u_{\theta}(t,x)=1+\sum_{n\geq 1} \theta^{n/2} I_n(f_n(\cdot,x;t)),
\end{equation}
with kernel $f_n(\cdot,x;t)$ given by:
\begin{align}
\label{def-fk}
	f_n(x_1,\ldots,x_n,x;t)
	=& \int_{T_n(t)} f_n(t_1,x_1,\ldots,t_n,x_n,t,x) d\pmb{t},
\end{align}
where $T_n(t)=\{0<t_1<\ldots<t_n<t\}$ and
\begin{equation}
\label{def-fn-time-dep}
f_n(t_1,x_1,\ldots,t_n,x_n,t,x)=G_{t-t_n}(x-x_n)\ldots G_{t_2-t_1}(x_2-x_1) 1_{\{0<t_1<\ldots<t_n<t\}}.
\end{equation}
is the kernel which appears in the chaos decomposition of the solution of (hAm) with time dependent noise (studied in \cite{BNQZ}). In this case,
\begin{equation}
\label{conv-ser-u-1}
\bE|u_{\theta}(t,x)|^2
=1+\sum_{n\geq 1} n! \theta^n \|\widetilde{f}_n(\cdot,x;t)\|_{\cP_0^{\otimes n}}^2.
\end{equation}
In fact, the solution exists if and only if the series \eqref{series-conv-u} converges in $L^2(\Omega)$, i.e. the series \eqref{conv-ser-u-1} converges. In this case, the solution is unique.

The next result gives the existence of the solution and provides an upper bound for its moments. For this result, we will use the fact that for any $a>0$, there exist some constants $c_{1}>0$ and $c_2>0$ depending on $a$ such that
\begin{equation}
\label{est1}
\sum_{n\geq 0}\frac{x^n}{(n!)^a} \leq c_{1}\exp(c_{2} x^{1/a}) \quad \mbox{for any $x>0$}.
\end{equation}

In addition, we will need the value of the following integral: for any $t>0$,
\begin{equation}
\label{G-int}
\int_{\bR}|\cF G_t(\xi)|^2 |\xi|^{\alpha}d\xi=\int_{\bR}
\frac{\sin^2(t|\xi|)}{|\xi|^2} |\xi|^{\alpha}d\xi=c_{\alpha}t^{1-\alpha}.
\end{equation}
The integral converges if and only if $\alpha \in (-1,1)$; see for instance, Lemma 3.1 of \cite{BJQ15}.

\begin{theorem}
\label{Thm-exist-u}
For any $H \in (\frac{1}{4},\frac{1}{2})$, equation \eqref{HAM} has a unique (Skorohod) solution. Moreover, for any $\theta>0$, $p \geq 2$, $t>0$ and $x \in \bR$,
\begin{align}
\label{mom-p-u}
	\|u_{\theta}(t,x)\|_p \leq c_1\exp\left(c_2 \theta^{\frac{1}{2H+1}}p^{\frac{1}{2H+1}}  t^{\frac{2H+2}{2H+1}}\right),
\end{align}
where $c_1>0$ and $c_2>0$ are constants that only depend on $H$. Consequently, $C_{\theta,p,T,u} := \sup_{(t,x) \in [0,T] \times \bR} \|u_{\theta}(t,x)\|_p<\infty$.
\end{theorem}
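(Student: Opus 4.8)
The plan is to reduce everything to an $L^2$ estimate on the chaos kernels. By the discussion preceding the statement, the solution exists and is unique precisely when the series \eqref{conv-ser-u-1} converges, and in that case $\|u_\theta(t,x)\|_2^2 = 1 + \sum_{n\ge 1} n!\,\theta^n \|\widetilde f_n(\cdot,x;t)\|_{\cP_0^{\otimes n}}^2$. Since the $L^p$ bound \eqref{mom-p-u} for $p>2$ will be deduced from the $L^2$ bound via the moment comparison (hypercontractivity for general SPDEs) of Appendix \ref{section-hyper}, which bounds $\|u_\theta(t,x)\|_p$ by $\|u_{(p-1)\theta}(t,x)\|_2$, the whole problem comes down to proving
\[
\sum_{n\ge 1} n!\,\theta^n \|\widetilde f_n(\cdot,x;t)\|_{\cP_0^{\otimes n}}^2 \le c_1\exp\big(c_2\,\theta^{1/(2H+1)} t^{(2H+2)/(2H+1)}\big)-1,
\]
uniformly in $x$. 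First I would use \eqref{rough-bound} to replace $\widetilde f_n$ by the ordered kernel $f_n$, and then apply Minkowski's integral inequality to pull the time integral in \eqref{def-fk} out of the $\cP_0^{\otimes n}$-norm, so that it suffices to estimate $\|g_{\pmb t}\|_{\cP_0^{\otimes n}}$ for the single time configuration $g_{\pmb t}(x_1,\dots,x_n)=\prod_{j=1}^n G_{t_{j+1}-t_j}(x_{j+1}-x_j)$ (with $t_{n+1}=t,\ x_{n+1}=x$), and integrate the result over $T_n(t)$.

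For a single configuration, Plancherel together with \eqref{Fourier} gives
\[
\|g_{\pmb t}\|_{\cP_0^{\otimes n}}^2 = c_H^n \int_{\bR^n} \prod_{j=1}^n |\cF G_{t_{j+1}-t_j}(\eta_j)|^2 \prod_{j=1}^n |\eta_j-\eta_{j-1}|^{1-2H}\, d\eta,
\]
after the change of variables $\eta_j=\xi_1+\cdots+\xi_j$ (here $\eta_0=0$, and the factor $e^{-i(\xi_1+\cdots+\xi_n)x}$ coming from the Fourier transform has modulus one, which is why the bound is independent of $x$). The key point, and what I expect to be the main obstacle, is the coupling of the frequencies through $|\eta_j-\eta_{j-1}|^{1-2H}$. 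Since $1-2H\in(0,1)$, I would use the subadditivity $|\eta_j-\eta_{j-1}|^{1-2H}\le |\eta_j|^{1-2H}+|\eta_{j-1}|^{1-2H}$ and expand the product into at most $2^n$ terms; in each term the frequency $\eta_j$ carries a weight $|\eta_j|^{a_j}$ with $a_j\in\{0,\,1-2H,\,2-4H\}\subset(-1,1)$ for $H\in(1/4,1/2)$, so the integral decouples and each factor is evaluated by \eqref{G-int} as $c_{a_j}(t_{j+1}-t_j)^{1-a_j}$. Crucially, $\sum_j a_j=n(1-2H)$ in every term, whence $\sum_j (1-a_j)=2Hn$.

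Taking square roots, integrating over $T_n(t)$, and applying the Dirichlet (Beta) simplex integral $\int_{T_n(t)}\prod_{j=1}^{n}(t_{j+1}-t_j)^{b_j}\,d\pmb t = t^{\,n+\sum_j b_j}\,\Gamma(n+1+\sum_j b_j)^{-1}\prod_j\Gamma(b_j+1)$ (with $t_{n+1}=t$) for $b_j=(1-a_j)/2$, so that $\sum_j b_j=Hn$, would yield
\[
\|f_n(\cdot,x;t)\|_{\cP_0^{\otimes n}} \le K^n\, \frac{t^{\,n(1+H)}}{\Gamma(n(1+H)+1)},
\]
for a constant $K=K(H)$ absorbing the $2^n$ terms and the bounded factors $c_{a_j},\Gamma(b_j+1)$. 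Then $n!\,\theta^n\|\widetilde f_n\|_{\cP_0^{\otimes n}}^2 \le n!\,\theta^n K^{2n}\, t^{2n(1+H)}/\Gamma(n(1+H)+1)^2$, and Stirling's formula gives $n!/\Gamma(n(1+H)+1)^2 = (n!)^{-(2H+1)}e^{O(n)}$, so the series is dominated by $\sum_{n} x^n/(n!)^{2H+1}$ with $x=C\,\theta\, t^{2(1+H)}$.

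Finally, \eqref{est1} applied with $a=2H+1$ bounds this by $c_1\exp(c_2 x^{1/(2H+1)})$; since $x^{1/(2H+1)}=C'\theta^{1/(2H+1)} t^{2(1+H)/(2H+1)}$ and $2(1+H)/(2H+1)=(2H+2)/(2H+1)$, this is exactly the $L^2$ form of \eqref{mom-p-u}, establishing convergence of \eqref{conv-ser-u-1} (hence existence and uniqueness) and the $p=2$ case. Feeding this into $\|u_\theta(t,x)\|_p\le\|u_{(p-1)\theta}(t,x)\|_2$ and using $(p-1)^{1/(2H+1)}\le p^{1/(2H+1)}$ produces \eqref{mom-p-u} for all $p\ge 2$; because the right-hand side is independent of $x$ and increasing in $t$, taking the supremum over $[0,T]\times\bR$ gives $C_{\theta,p,T,u}<\infty$.
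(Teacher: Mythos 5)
Your proposal is correct and follows essentially the same route as the paper: the same subadditivity/multi-index decomposition \eqref{prod}, the evaluation of each frequency integral by \eqref{G-int}, the simplex (Dirichlet) integral producing $t^{(H+1)n}$ over roughly $(n!)^{H+1}$, and the summation via \eqref{est1} with $a=2H+1$; your use of Minkowski's integral inequality on the $d\pmb{t}$ integral in place of the paper's Cauchy--Schwarz step $\big|\int_{T_n(t)}\cdots d\pmb{t}\big|^2\le \frac{t^n}{n!}\int_{T_n(t)}|\cdots|^2 d\pmb{t}$ is immaterial and yields the same exponents. The one point to tighten is the reduction of $p>2$ to $p=2$: Lemma \ref{Lemma-hypercontractivity} assumes a priori that $\bE|u_{\theta}(t,x)|^p<\infty$ for all $p$, so invoking it to \emph{produce} the $L^p$ bound is mildly circular; the paper avoids this by applying the chaoswise norm equivalence \eqref{hyper} together with Minkowski directly to the expansion, i.e. $\|u_{\theta}(t,x)\|_p\le\sum_{n\ge0}\big((p-1)\theta\big)^{n/2}\|I_n(f_n(\cdot,x;t))\|_2$, which gives finiteness of all moments and the stated bound in one stroke --- your argument is repaired by substituting this step (or by first noting that summability of the $L^2$ chaos norms for every $\theta$ already implies finiteness of all $L^p$ norms via \eqref{hyper}).
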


\begin{proof}
Note that $\cF f_n(t_1,\cdot,\ldots,t_n,\cdot,t,x)(\pmb{\xi})=
e^{-i(\xi_1+\ldots+\xi_n)x}\prod_{j=1}^n\cF G_{t_{j+1}-t_j}(\xi_1+\ldots+\xi_j)$, where $t_{n+1}=t$ and
$\pmb{\xi}=(\xi_1,\ldots,\xi_n)$.
By Cauchy-Schwarz inequality, we have
\begin{align*}
	\left| \cF f_n(\cdot,x;t) (\pmb{\xi}) \right|^2
	& = \left| \int_{T_n(t)} \prod_{j=1}^n \cF G_{t_{j+1}-t_j} (\xi_1 + \ldots + \xi_j) d\pmb{t} \right|^2 \\
	& \leq \dfrac{t^n}{n!} \int_{T_n(t)} \prod_{j=1}^n \left| \cF G_{t_{j+1}-t_j} (\xi_1 + \ldots + \xi_j) \right|^2 d\pmb{t},
\end{align*}
where $\pmb{t}=(t_1,\ldots,t_n)$. By Fubini's theorem,
\begin{align*}
	\|f_n(\cdot,x;t)\|_{\cP_0^{\otimes n}}^2
	&= c_H^n \int_{\bR^n} \left| \cF f_n(\cdot,x;t) (\pmb{\xi}) \right|^2 \prod_{j=1}^n |\xi_j|^{1-2H} d\pmb{\xi} \nonumber \\
	&\le  c_H^n \dfrac{ t^n }{n!} \int_{T_n(t)} \int_{\bR^n} \prod_{j=1}^n \left| \cF G_{t_{j+1}-t_j} (\xi_1 + \ldots + \xi_j) \right|^2 \prod_{j=1}^n |\xi_j|^{1-2H}  d\pmb{\xi} d\pmb{t} \nonumber \\
	&= c_H^n \dfrac{  t^n }{n!} \int_{T_n(t)} \int_{\bR^n} \prod_{j=1}^n \left| \cF G_{t_{j+1}-t_j} (\eta_j) \right|^2 |\eta_1|^{1-2H}\prod_{j=2}^n |\eta_j-\eta_{j-1}|^{1-2H} d{\pmb \eta} d\pmb{t},
\end{align*}
where for the last line, we used the change of variables $\eta_j = \xi_1 + \ldots + \xi_j$ for $j=1,\ldots,n$, and we denoted $\pmb{\eta}=(\eta_1,\ldots,\eta_n)$.
We now use the inequality $|a+b|^{1-2H} \leq |a|^{1-2H} + |b|^{1-2H}$, followed by the identity:
\[
x_1 \prod_{j=2}^n (x_j+x_{j-1})=\sum_{a \in A_n}x_j^{a_j},
\]
where $A_n$ is a set of multi-indices $a = (a_1, \ldots, a_n)$ with ${\rm card}(A_n) = 2^{n-1}$ such that $a_1 \in \{1,2\}$, $a_n \in \{0,1\}$, $a_2, \ldots, a_{n-1} \in \{0,1,2\}$ and $\sum_{j=1}^n a_j = n$.
The exact description of the set $A_n$ is given in Lemma 2.2 of \cite{BCM}. We obtain that:
\begin{align}
\label{prod}
	|\eta_1|^{1-2H}\prod_{j=2}^n |\eta_j-\eta_{j-1}|^{1-2H}
	\leq \sum_{a \in A_n} \prod_{j=1}^n |\eta_j|^{(1-2H)a_j}=\sum_{\alpha \in D_n} \prod_{j=1}^n |\eta_j|^{\alpha_j},
\end{align}
where $D_n$ is the set of multi-indices $\alpha=(\alpha_1,\ldots,\alpha_n)$ with $\alpha_j=(1-2H)a_j$ for all $j=1,\ldots,n$ and $a=(a_1,\ldots,a_n)\in A_n$. Therefore,
\begin{align*}
	\|f_n(\cdot,x;t)\|_{\cP_0^{\otimes n}}^2
	&\leq c_H^n \dfrac{ t^n}{n!} \sum_{\alpha\in D_n} \int_{T_n(t)} \prod_{j=1}^n \left( \int_{\bR} \left| \cF G_{t_{j+1}-t_j} (\eta_j) \right|^2 |\eta_j|^{\alpha_j} d\eta_j \right) d\pmb{t}.
\end{align*}
The inner integral above is calculated using \eqref{G-int}. In our case, $\alpha_j=2(1-2H)<1$, since $H>1/4$. Using the rough bound \eqref{rough-bound}, we infer that:
\begin{align}
\label{bound-f}
\|\widetilde{f}_n(\cdot,x;t)\|_{\cP_0^{\otimes n}}^2 \leq \|f_n(\cdot,x;t)\|_{\cP_0^{\otimes n}}^2 \leq  C^n \dfrac{t^n}{n!} \sum_{\alpha\in D_n} \int_{T_n(t)} \prod_{j=1}^n(t_{j+1}-t_j)^{1-\alpha_j}d \pmb{t} \leq C^n \frac{t^{(2H+2)n}}{(n!)^{2H+2}},
\end{align}
where $C>0$ is a constant that depends on $H$ which is different in each of its appearances. Using Minkowski inequality, hypercontractivity property \eqref{hyper}, and \eqref{bound-f}, we obtain:
\begin{align*}
\|u_{\theta}(t,x)\|_p & \leq \sum_{n \geq 0} \theta^{n/2} (p-1)^{n/2}
\|I_n(f_n(\cdot,x;t))\|_2 =
\sum_{n \geq 0} \theta^{n/2} (p-1)^{n/2} (n!)^{1/2}
\|\widetilde{f_n}(\cdot,x;t)\|_{\cP_0^{\otimes n}} \\
&
\leq \sum_{n\geq 0} \theta^{n/2} (p-1)^{n/2}C^{n/2}\frac{t^{(H+1)n}}{(n!)^{H+1/2}}.
\end{align*}
Relation \eqref{mom-p-u} now follows using \eqref{est1}.
\end{proof}

\section{Wave equation with delta initial condition}
\label{section-wave-delta}

In this section, we study the (hAm) on $[r,\infty)$ with zero initial condition  initial velocity given by the measure $\delta_z$. The relation between the solution of this model and the Malliavin derivative $Du_{\theta}$ plays an important role in the present article.

\medskip

For any $\theta>0$, $r>0$ and $z \in \bR$ fixed, we consider the following equation:
\begin{align}
\label{eq-v}
	\begin{cases}
		\dfrac{\partial^2 v}{\partial t^2} (t,x)
		= \dfrac{\partial^2 v}{\partial x^2} (t,x) + \sqrt{\theta}v(t,x) \dot{W}(x), \
		t>r, \ x \in \bR\\
		v(r,\cdot) = 0, \ \dfrac{\partial v}{\partial t} (r,\cdot) = \delta_z.
	\end{cases}
\end{align}
We say that $v_{\theta}^{(r,z)}$ is a (mild Skorohod) {\em solution} of \eqref{eq-v} if it satisfies the following equation:
\begin{equation}
\label{eq-v-int}
	v_{\theta}^{(r,z)}(t,x)=G_{t-r}(x-z)+\sqrt{\theta}\int_r^t \int_{\bR}G_{t-s}(x-y) v_{\theta}^{(r,z)}(s,y)W(\delta y) ds, \quad t \geq r.
\end{equation}

It can be proved that if a solution $v_{\theta}^{(r,z)}$ exists, then it is unique and it has the chaos expansion:
\begin{align}
\label{chaos-v}
	v_{\theta}^{(r,z)}(t,x)
	= G_{t-r}(x-z)+\sum_{n\geq 1} \theta^{n/2}I_n \left( g_n(\cdot,z,x;r,t) \right),
\end{align}
where
\begin{align}
\label{def-gk}
	g_n(x_1,\ldots,x_n,z,x;r,t)
	=& \int_{r<t_1<\ldots<t_n<t} g_n(t_1,x_1,\ldots,t_n,x_n,r,z,t,x) dt_1 \ldots dt_n,
\end{align}
and
\[
g_n(t_1,x_1,\ldots,t_n,x_n,r,z,t,x) =G_{t-t_n}(x-x_n) \ldots G_{t_1-r}(x_1-z)
\]
We denote $g_0(z,x;r,t)=g_0(r,z,t,x)=G_{t-r}(x-z)$. The necessary and sufficient condition for the existence of the solution $v_{\theta}^{(r,z)}$ is that the series in \eqref{chaos-v} converges in $L^2(\Omega)$, i.e.
\begin{align}
\label{series-conv-v}
\sum_{n\geq 1}\theta^n n!\|\widetilde{g}_n(\cdot,z,x;r,t)\|_{\cP_0^{\otimes n}}^2<\infty,
\end{align}
where $\widetilde{g}_n(\cdot,z,x;r,t)$ is the symmetrization of $g_n(\cdot,z,x;r,t)$.

Due to the special form \eqref{def-G} of $G$, $v_{\theta}^{(r,z)}(t,x)$ satisfies the following identity:
\begin{align}
\label{eq-identity-v}
	v_{\theta}^{(r,z)}(t,x) = 2G_{t-r}(x-z) v_{\theta}^{(r,z)}(t,x).
\end{align}

The existence of the solution $v_{\theta}$ and some of its properties follow from a connection with the (hAm) model with another Gaussian noise (called $\fX$), which is {\em white in time} and fractional in space (with the same Hurst index $H$ as the noise $W$):
\begin{align}
\label{eq-V}
	\begin{cases}
		\dfrac{\partial^2 V}{\partial t^2} (t,x)
		= \dfrac{\partial^2 V}{\partial x^2} (t,x) + \sqrt{\theta} V(t,x) \dot{\fX}(t,x), \
		t>r, \ x \in \bR\\
		V(r,\cdot) = 0, \ \dfrac{\partial V}{\partial t} (r,\cdot) = \delta_z.
	\end{cases}
\end{align}
More precisely, $\{\fX(\varphi);\varphi \in \cD(\bR_{+}\times \bR)\}$ is a zero-mean Gaussian process with covariance
\begin{equation}
\label{cov-fX}
\bE[\fX(\varphi)\fX(\psi)]=c_H\int_0^{\infty}\int_{\bR}\cF
\varphi(t,\cdot) \overline{\cF \psi(t,\cdot)(\xi)} |\xi|^{1-2H}d\xi dt=:\langle \varphi,\psi\rangle_{\cH_0},
\end{equation}
which can be extended to an isonormal Gaussian process $\fX=\{\fX(\varphi);\varphi \in \cH_0\}$, where $\cH_0$ is the completion of $\cD(\bR_{+}\times \bR)$ with respect to $\langle \cdot, \cdot \rangle_{\cH_0}$.

We say that $V_{\theta}^{(r,z)}$ is a {\em (mild Skorohod) solution} of \eqref{eq-V} if it satisfies:
\begin{align}
\label{eq-V-int}
	V^{(r,z)}_{\theta}(t,x)
	=G_{t-r}(x-z) + \sqrt{\theta} \int_r^t \int_{\bR} G_{t-s}(x-y) V^{(r,z)}_{\theta}(s,y) \fX(ds,dy), \quad t \geq r.
\end{align}
Using a similar method to the proof of Theorem 3.8 of \cite{BJQ15}, we show in Appendix \ref{section-appA} that equation \eqref{eq-V} has a unique solution $V_{\theta}^{(r,z)}$, and its moments are uniformly bounded:
\begin{equation}
\label{bound-V}
K_{\theta,T,p}:=\sup_{0\leq r<t\leq T} \sup_{x,z \in \bR}\|V_{\theta}^{(r,z)}(t,x)\|_p<\infty.
\end{equation}
See Example \eqref{ex-v}.
Moreover, the solution $V^{(r,z)}_{\theta}$ has the following chaos expansion
\begin{align}
\label{series-V}
	V^{(r,z)}_{\theta}(t,x) = G_{t-r}(x-z)+\sum_{n\geq 1} \theta^{n/2} I_n^{\fX}(g_n(\cdot,r,z,t,x)),
\end{align}
where $I_n^{\fX}$ is the $n$th multiple integral with respect to $\fX$. Since the solution $V^{(r,z)}_{\theta}$ exists, the series \eqref{series-V} converges in $L^2(\Omega)$. i.e.
\begin{equation}
\label{series-V-conv}
\sum_{n\geq 1}n! \theta^n \|\widetilde{g}_n(\cdot,r,z,t,x)\|_{\cH_0^{\otimes n}}^2<\infty,
\end{equation}
where $\widetilde{g}_n(\cdot,r,z,t,x)$ is the symmetrization of $g_n(\cdot,r,z,t,x)$.

We now establish the existence of the solution $v_{\theta}^{(r,z)}$ and we show that its moments are also uniformly bounded.

\begin{theorem}
\label{Thm-exist-v}
For any $r >0$, $z \in \bR$, equation \eqref{eq-v} has a unique solution $v_{\theta}^{(r,z)}$. Moreover, for any $\theta>0$, $p\geq 2$  and  $T>0$, we have:
\begin{equation}
\label{mom-v-bound}
	C_{\theta,p,T,v} := \sup_{0\leq r\leq t\leq T} \sup_{z,x \in \bR} \|v_{\theta}^{(r,z)}(t,x)\|_p < \infty.
\end{equation}
\end{theorem}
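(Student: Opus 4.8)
The plan is to transfer existence and moment bounds from the white-in-time solution $V_\theta^{(r,z)}$ to $v_\theta^{(r,z)}$, exploiting the fact that the chaos expansions \eqref{chaos-v} and \eqref{series-V} are built from the same kernels $g_n$; the only difference is that $v_\theta^{(r,z)}$ uses the purely spatial kernel $g_n(\cdot,z,x;r,t)$ obtained by integrating the space-time kernel $g_n(\cdot,r,z,t,x)$ over the time simplex $\{r<t_1<\ldots<t_n<t\}$. First I would establish a kernel identity: the symmetrization (in the $n$ spatial variables) of $g_n(\cdot,z,x;r,t)$ equals the integral over the full cube $(r,t)^n$ of the symmetrization (in the $n$ space-time pairs) of $g_n(\cdot,r,z,t,x)$. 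This holds because, as $\rho$ ranges over all permutations, the time-supports of the permuted space-time kernels tile $(r,t)^n$ up to a null set, and each permuted kernel integrated over its simplex reproduces a permutation of the spatial kernel. Taking the spatial Fourier transform and applying the Cauchy--Schwarz inequality in the $n$ time variables over the cube $(r,t)^n$ (of volume $(t-r)^n\le T^n$) then yields the norm comparison
\[
\|\widetilde g_n(\cdot,z,x;r,t)\|_{\cP_0^{\otimes n}}^2 \le (t-r)^n\,\|\widetilde g_n(\cdot,r,z,t,x)\|_{\cH_0^{\otimes n}}^2 .
\]

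Next, for existence I would verify the criterion \eqref{series-conv-v}. Combining the comparison above with $t-r\le T$ and the identity, valid by \eqref{series-V} and \eqref{bound-V},
\[
\sum_{n\ge1}(\theta T)^n n!\,\|\widetilde g_n(\cdot,r,z,t,x)\|_{\cH_0^{\otimes n}}^2 = \|V_{\theta T}^{(r,z)}(t,x)\|_2^2 - G_{t-r}(x-z)^2 \le K_{\theta T,T,2}^2 <\infty ,
\]
I obtain
\[
\sum_{n\ge1}\theta^n n!\,\|\widetilde g_n(\cdot,z,x;r,t)\|_{\cP_0^{\otimes n}}^2 \le \sum_{n\ge1}(\theta T)^n n!\,\|\widetilde g_n(\cdot,r,z,t,x)\|_{\cH_0^{\otimes n}}^2 \le K_{\theta T,T,2}^2 ,
\]
so the series \eqref{chaos-v} converges in $L^2(\Omega)$ and $v_\theta^{(r,z)}$ exists and is unique. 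Running the same computation for an arbitrary parameter $\mu>0$ gives the uniform bound $\|v_\mu^{(r,z)}(t,x)\|_2^2 \le \tfrac14 + K_{\mu T,T,2}^2$, the $\tfrac14$ coming from the deterministic term $G_{t-r}(x-z)^2\le\tfrac14$.

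To pass from $L^2$ to $L^p$ I would use a moment comparison based on the Ornstein--Uhlenbeck semigroup. From \eqref{chaos-v} and the definition of $T_s$ one reads off the scaling $T_s v_\mu^{(r,z)}(t,x) = v_{\mu e^{-2s}}^{(r,z)}(t,x)$, since $e^{-ns}\mu^{n/2}=(\mu e^{-2s})^{n/2}$ and the $n=0$ term is fixed by $T_s$. Writing $v_\theta^{(r,z)}(t,x)=T_s v_{(p-1)\theta}^{(r,z)}(t,x)$ with $e^{2s}=p-1$ and applying the hypercontractivity property \eqref{OU-hyper} with base exponent $2$ (so that $q(s)=e^{2s}+1=p$) gives
\[
\|v_\theta^{(r,z)}(t,x)\|_p \le \|v_{(p-1)\theta}^{(r,z)}(t,x)\|_2 \le \Big(\tfrac14 + K_{(p-1)\theta T,T,2}^2\Big)^{1/2},
\]
uniformly over $0\le r\le t\le T$ and $x,z\in\bR$, which is exactly \eqref{mom-v-bound}.

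The main obstacle is the first step: getting the kernel identity and norm inequality exactly right, in particular ensuring that the Cauchy--Schwarz-in-time bound is applied to the \emph{symmetrized} kernels, so that the right-hand side is the symmetrized $\cH_0^{\otimes n}$-norm appearing in \eqref{series-V-conv} and \eqref{bound-V} rather than the unsymmetrized one. Equivalently, one may work with unsymmetrized kernels over the simplex and use that, because $g_n(\cdot,r,z,t,x)$ is supported on the time simplex, its permuted copies have disjoint time-supports, whence $\|g_n(\cdot,r,z,t,x)\|_{\cH_0^{\otimes n}}^2 = n!\,\|\widetilde g_n(\cdot,r,z,t,x)\|_{\cH_0^{\otimes n}}^2$. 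Everything else is a routine repackaging of arguments already used in the proof of Theorem \ref{Thm-exist-u}.
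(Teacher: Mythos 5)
Your proposal is correct and follows essentially the same route as the paper: the key comparison $\|\widetilde g_n(\cdot,z,x;r,t)\|_{\cP_0^{\otimes n}}^2 \le (t-r)^n\|\widetilde g_n(\cdot,r,z,t,x)\|_{\cH_0^{\otimes n}}^2$ (which the paper gets via Cauchy--Schwarz on the simplex plus the $n!$ identity for simplex-supported kernels, exactly the variant you mention at the end), the resulting bound $\bE|v_\theta^{(r,z)}(t,x)|^2\le\bE|V_{\theta(t-r)}^{(r,z)}(t,x)|^2$, and the passage to $L^p$ via the Ornstein--Uhlenbeck scaling $T_s U_\mu = U_{\mu e^{-2s}}$ with hypercontractivity, which the paper invokes as L\^e's principle (Theorem B.1 of \cite{BCC} / Lemma \ref{Lemma-hypercontractivity}) and you derive inline from the chaos expansion. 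No gaps.
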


\begin{proof}
We denote $\pmb{\xi}=(\xi_1\ldots d\xi_n)$. By Cauchy-Schwarz inequality, we have
\begin{align*}
	& \|g_n(\cdot,z,x;r,t)\|_{\cP_0^{\otimes n}}^2 \\
	&=c_H^n \int_{\bR^n} \left| \int_{r<t_1<\ldots<t_n<t} \cF g_n(t_1,\cdot,\ldots,t_n,\cdot,r,z,t,x) (\pmb{\xi}) d\pmb{t} \right|^2 \prod_{j=1}^n |\xi_j|^{1-2H} d\pmb{\xi} \\
	&\leq c_H^n \dfrac{(t-r)^n}{n!} \int_{\bR^n} \int_{r<t_1<\ldots<t_n<t} \left| \cF g_n(t_1,\cdot,\ldots,t_n,\cdot,r,z,t,x) (\pmb{\xi}) \right|^2 d\pmb{t} \prod_{j=1}^n |\xi_j|^{1-2H} d\pmb{\xi}\\
	& =\dfrac{(t-r)^n}{n!} \|g_n(\cdot,r,z,t,x)\|_{\cH_0^{\otimes n}}^2.
\end{align*}
Since $g_n(t_1,x_1,\ldots,t_n,x_n,r,z,t,x)$ contains the indicator of the set $t_1<\ldots<t_n$,
\[
\|g_n(\cdot,r,z,t,x)\|_{\cH_0^{\otimes n}}^2= n! \|\widetilde{g}_n(\cdot,r,z,t,x)\|_{\cH_0^{\otimes n}}^2.
\]
We obtain that:
\[
 \|\widetilde{g}_n(\cdot,z,x;r,t)\|_{\cP_0^{\otimes n}}^2 \leq  \|g_n(\cdot,z,x;r,t)\|_{\cP_0^{\otimes n}}^2 \leq (t-r)^n \|\widetilde{g}_n(\cdot,r,z,t,x)\|_{\cH_0^{\otimes n}}^2,
\]
The converges of the series \eqref{series-conv-v} follows from \eqref{series-V-conv}. This proves the existence of the solution $v_{\theta}^{(r,z)}$. Moreover,
\begin{align}
\nonumber
\bE|v_{\theta}^{(r,z)}(t,x)|^2& =\sum_{n\geq 0}\theta^n n!\|\widetilde{g}_n(\cdot,z,x;r,t)\|_{\cP_0^{\otimes n}}^2 \\
\label{mom-v-V}
& \leq \sum_{n\geq 0} \theta^n n! (t-r)^n \|\widetilde{g}_n(\cdot,r,z,t,x)\|_{\cH_0^{\otimes n}}^2= \bE|V_{\theta (t-r)}^{(r,z)}(t,x)|^2.
\end{align}
To treat the higher moments, we use L\^e's hypercontractivity principle. Note that this principle was originally developed in \cite{le16} for the heat equation, but it is in fact valid for a larger class of SPDEs (see Theorem B.1 of \cite{BCC}). From this principle, combined with \eqref{mom-v-V}, we deduce that
\[
\|v_{\theta}^{(r,z)}(t,x)\|_p \leq \|v_{(p-1)\theta}^{(r,z)}(t,x)\|_2 \leq
\|V_{(p-1)\theta (t-r)}^{(r,z)}(t,x)\|_2 \leq \|V_{(p-1)\theta T}^{(r,z)}(t,x)\|_2.
\]
Relation \eqref{mom-v-bound} follows from \eqref{bound-V}.
\end{proof}

We conclude this section with a uniform bound for some integrals of $v_{\theta}$, which will be used in the proof of Theorem \ref{QCLT}.

\begin{lemma}
\label{sup-v-lem}
For any $\theta>0$, $p\geq 2$, $q>0$ and $t>0$,
\begin{align*}
\sup_{r \in [0,t]} \sup_{z \in \bR}\int_{\bR}\|v_{\theta}^{(r,z)}(t,x)\|_p^q dx \leq 2 t C_{\theta,p,t,v}^q \quad \mbox{and} \quad
\sup_{r \in [0,t]} \sup_{x \in \bR}\int_{\bR}\|v_{\theta}^{(r,z)}(t,x)\|_p^q dz \leq 2 t C_{\theta,p,t,v}^q,
\end{align*}
where $C_{\theta,p,t,v}$ is the constant given by relation \eqref{mom-v-bound}.
\end{lemma}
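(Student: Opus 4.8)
The plan is to exploit the finite–propagation–speed property of the wave kernel, which is already encoded in the identity \eqref{eq-identity-v}. Since $G_t(x)=\frac{1}{2}1_{\{|x|<t\}}$, the factor $2G_{t-r}(x-z)=1_{\{|x-z|<t-r\}}$ is an indicator taking only the values $0$ and $1$. Rewriting \eqref{eq-identity-v} as $v_{\theta}^{(r,z)}(t,x)\big(1-2G_{t-r}(x-z)\big)=0$ and noting that $1-2G_{t-r}(x-z)=1_{\{|x-z|\ge t-r\}}$, I obtain that $v_{\theta}^{(r,z)}(t,x)=0$ almost surely whenever $|x-z|\ge t-r$. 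Consequently $\|v_{\theta}^{(r,z)}(t,x)\|_p=0$, and hence $\|v_{\theta}^{(r,z)}(t,x)\|_p^q=0$ for every $q>0$, outside the strip $\{|x-z|<t-r\}$.

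First I would fix $r\in[0,t]$ and $z\in\bR$ and reduce the integral over $\bR$ to an integral over this support strip:
\[
\int_{\bR}\|v_{\theta}^{(r,z)}(t,x)\|_p^q\,dx=\int_{\{|x-z|<t-r\}}\|v_{\theta}^{(r,z)}(t,x)\|_p^q\,dx.
\]
Next I would invoke the uniform moment bound \eqref{mom-v-bound}, which (taking $T=t$) gives $\|v_{\theta}^{(r,z)}(t,x)\|_p\le C_{\theta,p,t,v}$ for every $r\in[0,t]$ and $x,z\in\bR$. Bounding the integrand pointwise by $C_{\theta,p,t,v}^q$ and using that the strip $\{|x-z|<t-r\}$ has Lebesgue measure $2(t-r)\le 2t$, I arrive at
\[
\int_{\bR}\|v_{\theta}^{(r,z)}(t,x)\|_p^q\,dx\le 2(t-r)\,C_{\theta,p,t,v}^q\le 2t\,C_{\theta,p,t,v}^q,
\]
and taking the supremum over $r\in[0,t]$ and $z\in\bR$ yields the first estimate.

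For the second estimate I would observe that the support condition $|x-z|<t-r$ is symmetric in the variables $x$ and $z$, so for fixed $x$ the set $\{z:|x-z|<t-r\}$ again has Lebesgue measure $2(t-r)\le 2t$; repeating the computation above with the roles of $x$ and $z$ interchanged, and then taking the supremum over $r\in[0,t]$ and $x\in\bR$, gives the second bound. There is no serious obstacle here once the support property is in hand: both steps are elementary, and the only point requiring a word of care is the passage from the almost-sure vanishing of $v_{\theta}^{(r,z)}(t,x)$ outside the light cone to the vanishing of the deterministic quantity $\|v_{\theta}^{(r,z)}(t,x)\|_p^q$ there, which is immediate since $q>0$ and the $L^p(\Omega)$-norm of the zero random variable is zero.
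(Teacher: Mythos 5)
Your proof is correct and is essentially the same as the paper's: the paper also uses the identity \eqref{eq-identity-v} to insert the factor $2^qG_{t-r}^q(x-z)=1_{\{|x-z|<t-r\}}$, bounds the norm by $C_{\theta,p,t,v}$ via \eqref{mom-v-bound}, and integrates the indicator to get $2(t-r)\le 2t$. Your reformulation of \eqref{eq-identity-v} as a support (finite propagation speed) statement is just a different way of phrasing the identical computation.
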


\begin{proof} Using identity \eqref{eq-identity-v} and relation \eqref{mom-v-bound}, we have:
\begin{align*}
\int_{\bR}\|v_{\theta}^{(r,z)}(t,x)\|_p^q dx& =2^q\int_{\bR} G_{t-r}^q(x-z) \|v_{\theta}^{(r,z)}(t,x)\|_p dx \\
& \leq C_{\theta,t,p,v}^q 2^q \int_{\bR}G_{t-r}^q(x-z)dx =2C_{\theta,t,p,v}^q(t-r).
\end{align*}
The other statement is proved similarly.
\end{proof}

\section{Estimates for the Mallivian derivatives}
\label{section-Malliavin}

In this section, we give some estimates for $Du_{\theta}(t,x)$ and $D^2u_{\theta}(t,x)$ and their increments, which are obtained using a connection with $v_{\theta}$.
If we denote by $U_{\theta}$ the solution of (hAm) with white noise in time $\fX$, it can be shown that $D_{r,z}U_{\theta}(t,x)=U_{\theta}(r,z)V_{\theta}^{(r,z)}(t,x)$. This relation does not hold for the time-independent noise. Luckily, with the help of Lemmas \ref{Lemma-decomp-F}
and \ref {Lemma-decomp-I}, we can still develop a connection between $Du_{\theta}(t,x)$ and $v_{\theta}$.

\medskip

We start by observing that for any fixed $z\in \bR$, $D_zu_{\theta}(t,x)$ has the chaos expansion:
\begin{equation}
\label{chaos-D}
D_zu_{\theta}(t,x)= \sum_{n\geq 1}  \theta^{n/2} n I_{n-1}(\tilde f_n(\cdot,z,x;t)).
\end{equation}
See \eqref{Mal-F}. For any $z \in \bR^d$ fixed, we have the decomposition:
\begin{equation}
\label{decomp} \widetilde{f}_n(\cdot,z,x;t)=\frac{1}{n}\sum_{j=1}^{n}h_j^{(n)}(\cdot,z,x;t),
\end{equation}
where $h_j^{(n)}(\cdot,z,x;t)$ is the symmetrization of the function $f_j^{(n)}(\cdot,z,x;t)$ given by:
\begin{align*}
	&f_j^{(n)}(x_1,\ldots,x_{n-1},z,x;t)=
	f_n(x_1,\ldots,x_{j-1},z,x_{j},\ldots,x_{n-1},x;t)\\
	&\quad =\int_{\{0<t_1<\ldots<t_{j-1}<r<t_j<\ldots<t_{n-1}<t\}}
	G_{t-t_{n-1}}(x-x_{n-1})\ldots G_{t_j-r}(x_j-z)G_{r-t_{j-1}}(z-x_{j-1})\ldots \\
	& \qquad \qquad \qquad \qquad \qquad G_{t_2-t_1}(x_2-x_1)dt_1 \ldots dt_{n-1}dr.
\end{align*}

If the series \eqref{chaos-D} converges in $L^2(\Omega)$, then $u_{\theta}(t,x) \in \bD^{1,2}$ and $D_{\cdot}u_{\theta}(t,x)$ is a function in $z$. In this case,
\begin{equation}
\label{D-decomp}
D_zu_{\theta}(t,x)= \sum_{n\geq 1}  \theta^{n/2} \sum_{j=1}^{n}I_{n-1}(f_{j}^{(n)}(\cdot,z,x;t)).
\end{equation}

For the proof of the QCLT in Section \ref{sec:QCLT} below, we will need some estimates for the increments 
of the Malliavin derivatives of $u_{\theta}$. We include these estimates in the next two theorems. Parts a) of these theorems give some estimates
which are not needed in the present paper. We include these estimates for the sake of comparison with similar results that exist in the literature, e.g. Theorem 1.3 of \cite{BNQZ} for the colored noise in time,  respectively Theorems 3.1-3.2 of \cite{BY} for the time-independent noise.

\begin{theorem}	
\label{th-D-bound}
a) For any $\theta>0$, $p\geq 2$, $0\leq t\leq T$ and $x,z \in \bR$,
\begin{equation}
\label{D-bound}
	\|D_z u_{\theta}(t,x)\|_p \leq C \int_0^t G_{t-r}(x-z) dr,
\end{equation}
where $C= 2 \sqrt{\theta} C_{4(p-1)\theta,2,T,u} C_{4(p-1)\theta,2,T,v}$, and $C_{\theta,p,T,u}, C_{\theta,p,T,v}$ are the constants given by Theorems \ref{Thm-exist-u} and \ref{Thm-exist-v}.

b) For any $\theta>0$, $p\geq 2$, $0\leq  t\leq T$ and $x,z,z' \in \bR$,
\begin{align*}
\left\| D_z u_{\theta}(t,x)- D_{z+z'}u_{\theta}(t,x) \right\|_p
	 & \le \sqrt{2\theta} \int_0^t I_1(z,z',x;r,t)dr,
\end{align*}
where
\begin{align*}
I_1(z,z',x;r,t)& = \left\| u_{\eta}(r,z) - u_{\eta}(r,z+z') \right\|_2 \left\| v_{\eta}^{(r,z)}(t,x) \right\|_2  + \\
& \quad \quad \quad \left\| u_{\eta}(r,z+z') \right\|_2 \left\| v_{\eta}^{(r,z)}(t,x) - v_{\eta}^{(r,z+z')}(t,x) \right\|_2
\end{align*}
and $\eta=4(p-1)\theta$.
\end{theorem}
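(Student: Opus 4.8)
The plan is to convert $D_z u_\theta(t,x)$ into a chaos series of the ``convolution'' type handled by Lemma \ref{Lemma-decomp-F}, and to read the required bound off that lemma. First I would start from the expansion \eqref{D-decomp} and factorize the Malliavin kernel $f_j^{(n)}(\cdot,z,x;t)$ at the time $r$ at which the point $z$ is inserted into the chain of wave propagators. Reading the product of kernels along the time-ordering $0<t_1<\cdots<t_{j-1}<r<t_j<\cdots<t_{n-1}<t$ and cutting at the node $(r,z)$, the factors before $r$ assemble into the $u$-kernel $f_{j-1}(\cdot,z;r)$ (the chain terminating at $z$ at time $r$), while the factors after $r$ assemble into the $v^{(r,z)}$-kernel $g_{n-j}(\cdot,z,x;r,t)$ (the chain emanating from $z$); cf. \eqref{series-conv-u} and \eqref{chaos-v}. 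This gives
\[
f_j^{(n)}(\cdot,z,x;t)=\int_0^t f_{j-1}(\cdot,z;r)\otimes g_{n-j}(\cdot,z,x;r,t)\,dr,
\]
and, after reindexing $k=j-1$, $\ell=n-j$,
\[
D_z u_\theta(t,x)=\sum_{k\ge0}\sum_{\ell\ge0}\theta^{(k+\ell+1)/2}\,I_{k+\ell}\Big(\int_0^t f_k(\cdot,z;r)\otimes g_\ell(\cdot,z,x;r,t)\,dr\Big).
\]
This is exactly the setting of Lemma \ref{Lemma-decomp-F} with $E=[0,t]$, $\mu$ the Lebesgue measure, $F_\theta(r)=u_\theta(r,z)$ and $G_\theta(r)=v_\theta^{(r,z)}(t,x)$. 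I regard establishing this factorization as the main obstacle: it is the conceptual step that links $D_z u_\theta$ to the auxiliary process $v_\theta^{(r,z)}$, and everything afterwards is essentially mechanical.

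For the difference in part b) I would subtract the two series termwise and telescope each product kernel (suppressing the arguments $r,x,t$):
\[
f_k(\cdot,z)\otimes g_\ell(\cdot,z)-f_k(\cdot,z+z')\otimes g_\ell(\cdot,z+z')=\big[f_k(\cdot,z)-f_k(\cdot,z+z')\big]\otimes g_\ell(\cdot,z)+f_k(\cdot,z+z')\otimes\big[g_\ell(\cdot,z)-g_\ell(\cdot,z+z')\big].
\]
By linearity of the chaos expansion, $f_k(\cdot,z)-f_k(\cdot,z+z')$ and $g_\ell(\cdot,z)-g_\ell(\cdot,z+z')$ are precisely the chaos kernels of $u_\theta(r,z)-u_\theta(r,z+z')$ and of $v_\theta^{(r,z)}(t,x)-v_\theta^{(r,z+z')}(t,x)$. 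Hence $D_z u_\theta-D_{z+z'}u_\theta$ splits into two pieces of the same convolution form, the first with $(F_\theta(r),G_\theta(r))=\big(u_\theta(r,z)-u_\theta(r,z+z'),\,v_\theta^{(r,z)}(t,x)\big)$ and the second with $(F_\theta(r),G_\theta(r))=\big(u_\theta(r,z+z'),\,v_\theta^{(r,z)}(t,x)-v_\theta^{(r,z+z')}(t,x)\big)$. These are exactly the two products appearing in $I_1(z,z',x;r,t)$.

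It then remains to estimate the $L^p$-norm of a random variable of convolution form. Here I would: (i) apply Minkowski's inequality over the chaoses together with the hypercontractivity estimate \eqref{hyper} on each space $\cP_{0,m}$, bounding the $p$-norm by $\sqrt{\theta}$ times a weighted sum $\sum_m\big((p-1)\theta\big)^{m/2}\|I_m(K_m)\|_2$, where $K_m$ is the chaos-$m$ kernel; (ii) use the triangle inequality to pass to the double sum over $(k,\ell)$ and a Cauchy--Schwarz with geometric weights $2^{-(k+\ell)}$ to convert this $\ell^1$-type sum into the square-summable form controlled by Lemma \ref{Lemma-decomp-F} at parameter $2(p-1)\theta$; and (iii) invoke Lemma \ref{Lemma-decomp-F}, which produces $\int_0^t\|F_\eta(r)\|_2\,\|G_\eta(r)\|_2\,dr$ with $\eta=4(p-1)\theta$. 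Applying this to the two pieces above and adding yields a bound of the form $C\sqrt{\theta}\int_0^t I_1(z,z',x;r,t)\,dr$, with $C$ read off from the hypercontractivity and geometric-series factors, giving the stated $\sqrt{2\theta}$. The same engine applied to the single term of part a), combined with the support identity \eqref{eq-identity-v} (which lets one write $\|v_\eta^{(r,z)}(t,x)\|_2=2G_{t-r}(x-z)\|v_\eta^{(r,z)}(t,x)\|_2$ and thereby extract the factor $G_{t-r}(x-z)$), yields \eqref{D-bound}.

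The finiteness of all the $L^2$-norms on the right is guaranteed by Theorems \ref{Thm-exist-u} and \ref{Thm-exist-v}. The one point requiring care in the bookkeeping is that the $L^2$-norm of a fixed chaos is the norm of a \emph{sum} over $(k,\ell)$ with $k+\ell$ fixed, so the diagonal bound of Lemma \ref{Lemma-decomp-F} is only reached after the triangle-inequality/Cauchy--Schwarz detour of step (ii); this is where the numerical constant and the precise value $\eta=4(p-1)\theta$ originate.
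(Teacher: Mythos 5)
Your proposal is correct and follows essentially the same route as the paper: the factorization $f_j^{(n)}(\cdot,z,x;t)=\int_0^t f_{j-1}(\cdot,z;r)\otimes g_{n-j}(\cdot,z,x;r,t)\,dr$, the telescoping of the increment into the two pieces matching $I_1$, and Lemma \ref{Lemma-decomp-F} at parameter $2(p-1)\theta$ (whence $\eta=4(p-1)\theta$), with the identity \eqref{eq-identity-v} used to extract the factor $G_{t-r}(x-z)$ in part a). The only (harmless) discrepancy is that your chaos-by-chaos use of \eqref{hyper} combined with Minkowski and the Cauchy--Schwarz step with weights $2^{-(k+\ell)}$ produces constants of the form $4\sqrt{\theta}$ and $2\sqrt{\theta}$ rather than the stated $2\sqrt{\theta}$ and $\sqrt{2\theta}$; the paper avoids this extra factor by first applying the global moment comparison of Lemma \ref{Lemma-hypercontractivity} (reducing to $p=2$ with $\theta\mapsto(p-1)\theta$) and then exploiting orthogonality of the chaoses in $L^2$.
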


\begin{proof}
a) Using notations \eqref{def-fk} and \eqref{def-gk}, we can write
\begin{equation}
\label{expr-f-jn}
f_j^{(n)}(\cdot,z,x;t)=\int_0^t f_{j-1}(\cdot,z;r)\otimes g_{n-j}(\cdot,z,x;r,t)dr.
\end{equation}

By stochastic Fubini's theorem, we can commute the $dr$ integral with the multiple Wiener integral $I_{n-1}$. We obtain:
\begin{align} \label{eq-I-decomposition-integral}
	I_{n-1}(f_j^{(n)}(\cdot,z,x;t))=\int_0^t I_{n-1}(f_{j-1}(\cdot,z;r)\otimes g_{n-j}(\cdot,z,x;r,t))dr.
\end{align}

By the definition \eqref{def-gk} and the special form $2 G_t(x) = \mathbf{1}_{|x|\le t} = 4 G_t^2(x)$, we have the identity $g_{n-j}(\cdot,z,x;r,t) = 2 G_{t-r}(x-z) g_{n-j}(\cdot,z,x;r,t)$. Hence, by \eqref{eq-I-decomposition-integral} and Cauchy-Schwarz inequality, as well as the Lemma \ref{Lemma-decomp-I}, we have
\begin{align*}
& \left\| I_{n-1}(f_j^{(n)}(\cdot,z,x;t))
	\right\|_2^2
	= \bE \left[ \left( \int_0^t 2 G_{t-r}(x-z) I_{n-1}(f_{j-1}(\cdot,z;r)\otimes g_{n-j}(\cdot,z,x;r,t))dr \right)^2 \right] \\
& \quad \le \int_0^t 2 G_{t-r}(x-z) dr \int_0^t \left\| I_{n-1}(f_{j-1}(\cdot,z;r)\otimes g_{n-j}(\cdot,z,x;r,t)) \right\|_2^2 dr \\
& \quad \le \binom{n-1}{j-1} \int_0^t 2 G_{t-r}(x-z) dr \int_0^t \left\| I_{j-1}(f_{j-1}(\cdot,z;r)) \right\|_2^2 \left\| I_{n-j}(g_{n-j}(\cdot,z,x;r,t)) \right\|_2^2 dr.
\end{align*}
Thus, by \eqref{D-decomp}, orthogonality and Cauchy-Schwarz inequality as well as the inequalities $(\sum_{j=1}^n a_j)^2 \leq n \sum_{j=1}^n a_j^2$,
$n \le 2^{n-1}$ and $\binom{n-1}{j-1} \le 2^{n-1}$, we have
\begin{align*}
& \| D_z u_{\theta}(t,x) \|_2^2
	= \sum_{n\geq 1} \theta^n \left\| \sum_{j=1}^n I_{n-1}(f_j^{(n)}(\cdot,z,x;t)) \right\|_2^2
	\le \sum_{n\geq 1} \theta^n n \sum_{j=1}^n \left\| I_{n-1}(f_j^{(n)}(\cdot,z,x;t)) \right\|_2^2 \\
	& \quad \leq \int_0^t 2 G_{t-r}(x-z) dr \sum_{n\geq 1} 4^{n-1} \theta^n \sum_{j=1}^n \int_0^t \left\| I_{j-1}(f_{j-1}(\cdot,z;r)) \right\|_2^2 \left\| I_{n-j}(g_{n-j}(\cdot,z,x;r,t)) \right\|_2^2 dr \\
	& \quad = \theta \int_0^t 2 G_{t-r}(x-z) dr  \\
	& \quad \int_0^t \sum_{j\geq 1} (4\theta)^{j-1} \left\| I_{j-1}(f_{j-1}(\cdot,z;r)) \right\|_2^2 \left(\sum_{n\geq j} (4\theta)^{n-j} \left\| I_{n-j}(g_{n-j}(\cdot,z,x;r,t)) \right\|_2^2\right) dr \\
	& \quad =\theta \int_0^t 2 G_{t-r}(x-z) dr \int_0^t \| u_{4\theta}(r,z) \|_2^2 \| v_{4\theta}^{(r,z)}(t,x) \|_2^2 dr.
\end{align*}
Using identity \eqref{eq-identity-v}, and Theorems \ref{Thm-exist-u} and \ref{Thm-exist-v}, we have
\begin{align*}
	\| D_z u_{\theta}(t,x) \|_2^2
	\le& 4\theta \int_0^t G_{t-r}(x-z) dr \int_0^t \| u_{4\theta}(r,z) \|_2^2 \| v_{4\theta}^{(r,z)}(t,x) \|_2^2 G_{t-r}(x-z) dr \\
	\le& 4\theta C_{4\theta,2,T,u}^2 C_{4\theta,2,T,v}^2 \left( \int_0^t G_{t-r}(x-z) dr \right)^2.
\end{align*}
By Lemma \ref{Lemma-hypercontractivity}, we have
\begin{align*}
	\| D_z u_{\theta}(t,x) \|_p
	\le \dfrac{1}{\sqrt{p-1}} \| D_z u_{(p-1)\theta}(t,x) \|_2
	\le 2 \sqrt{\theta} C_{4(p-1)\theta,2,t,u} C_{4(p-1)\theta,2,t,v} \int_0^t G_{t-r}(x-z) dr.
\end{align*}

b) By Lemma \ref{Lemma-hypercontractivity}, \eqref{D-decomp}, orthogonality, Cauchy-Schwarz inequality, the inequality $n \le 2^{n-1}$, Lemma \ref{Lemma-decomp-F}, we have:
\begin{align*}
&\left\| D_z u_{\theta}(t,x)- D_{z+z'}u_{\theta}(t,x) \right\|_p^2 \le \dfrac{1}{p-1} \left\| D_z u_{(p-1)\theta}(t,x)- D_{z+z'}u_{(p-1)\theta}(t,x) \right\|_2^2 \\
	& \quad =\dfrac{1}{p-1} \sum_{n\geq 1} \big((p-1)\theta\big)^n \left\| \sum_{j=1}^n I_{n-1} \left( f_j^{(n)}(\cdot,z,x;t) - f_j^{(n)}(\cdot,z+z',x;t) \right) \right\|_2^2 \\
	& \quad \le \dfrac{1}{p-1} \sum_{n\geq 1} \big((p-1)\theta\big)^n n \sum_{j=1}^n \left\| I_{n-1} \left( f_j^{(n)}(\cdot,z,x;t) - f_j^{(n)}(\cdot,z+z',x;t) \right) \right\|_2^2 \\
& \quad \leq \theta \sum_{n\geq 1} \big(2(p-1)\theta\big)^{n-1} \sum_{j=1}^n \left\| I_{n-1} \left( f_j^{(n)}(\cdot,z,x;t) - f_j^{(n)}(\cdot,z+z',x;t) \right) \right\|_2^2.
\end{align*}
Let $\tau=2(p-1)\theta$.
Using \eqref{expr-f-jn} for expressing $f_j^{(n)}(\cdot,z,x;t)$ and $f_j^{(n)}(\cdot,z+z',x;t)$, we have:
\[
\left\| D_z u_{\theta}(t,x)- D_{z+z'}u_{\theta}(t,x) \right\|_p^2 \leq 2\theta (T_1+T_2),
\]
where
\begin{align*}
T_1&=\sum_{n\geq 1} \tau^{n-1} \sum_{j=1}^n
\left\| I_{n-1} \left( \int_0^t \big( f_{j-1}(\cdot,z;r) - f_{j-1}(\cdot,z+z';r) \big) \otimes g_{n-j}(\cdot,z,x;r,t) dr \right) \right\|_2^2\\
T_2&=\sum_{n\geq 1} \tau^{n-1} \sum_{j=1}^n
\left\| I_{n-1} \left( \int_0^t f_{j-1}(\cdot,z+z';r) \otimes \big( g_{n-j}(\cdot,z,x;r,t) - g_{n-j}(\cdot,z+z',x;r,t) \big) dr \right) \right\|_2^2.
\end{align*}
We now use the fact that for any $\theta>0$ and for any functions $(h_n)_{n\geq 0}$ and $(h_n')_{n\geq 0}$ for which the integrals below are well-defined, by Lemma \ref{Lemma-decomp-F}, we have:
\begin{align*}
& \sum_{n\geq 1}\theta^{n-1}  \sum_{j=1}^n
\left\| I_{n-1}  \left(\int_0^t h_{j-1}(\cdot,r) \otimes h_{n-j}'(\cdot,r) dr \right) \right\|_2^2\\
& \quad =\sum_{p\geq 0} \sum_{q \geq 0}\theta^{p+q}
\left\| I_{p+q}  \left(\int_0^t  h_{p}(\cdot,r) \otimes h_{q}'(\cdot,r) dr\right) \right\|_2^2\\
& \quad \leq \left(\int_0^t \left\|\sum_{p\geq 0} (2\theta)^{p/2}I_p\big(h_p(\cdot,r)\big)\right\|_2
\left\|\sum_{q\geq 0} (2\theta)^{q/2}I_q\big(h_q'(\cdot,r)\big)\right\|_2dr \right)^2.
\end{align*}
The conclusion follows using the chaos decomposition of $u_{\theta}(t,x)$ and $v_{\theta}^{(r,z)}(t,x)$.
\end{proof}

We now examine the second Malliavin derivative.
For any fixed $w,z \in \bR$, we have the following chaos expansion
\begin{equation}
\label{D2-series}
	D_{w,z}^2 u_{\theta}(t,x)
	=\sum_{n\geq 2} n(n-1) \theta^{n/2} I_{n-2} \big( \widetilde{f}_n(\cdot,w,z,x;t) \big).
\end{equation}
Note that
\begin{align} \label{eq-hij}
	\widetilde{f}_n(\cdot,w,z,x;t)
	=\frac{1}{n(n-1)} \sum_{i,j=1,i\neq j}^n
	h_{ij}^{(n)}(\cdot,w,z,x;t),
\end{align}
where $h_{ij}^{(n)}(\cdot,w,z,x;t)$ is the symmetrization of the function $f_{ij}^{(n)}(\cdot,w,z,x;t)$ defined as follows. If $i<j$,
\begin{align*}
& f_{ij}^{(n)}(x_1,\ldots,x_{n-2},w,z,x;t) \\
&\quad = f_n(x_1,\ldots,x_{i-1},w,x_{i},	\ldots,x_{j-2},z,x_{j-1},\ldots,x_{n-2},x;t) \\
& \quad =	\int_{\{0<t_1<\ldots<t_{i-1}<s<t_i<\ldots<t_{j-2}<r<t_{j-1}<\ldots<t_{n-2}<t\}}
	G_{t-t_{n-2}}(x-x_{n-2}) \ldots G_{t_{j-1}-r}(x_{j-1}-z) \\
	& \quad G_{r-t_{j-2}}(z-x_{j-2}) \ldots G_{t_i-s}(x_i-w)G_{s-t_{i-1}}(w-x_{i-1}) \ldots G_{t_2-t_1}(x_2-x_1) dt_1 \ldots dt_{n-2}drds \\
& \quad = \int_{0<s<r<t} drds f_{i-1}(x_1,\ldots,x_{i-1},w;s) g_{j-i-1}(x_i,\ldots,x_{j-2},w,z;s,r) g_{n-j}(x_{j-1},\ldots,x_{n-2},z,x;r,t).
\end{align*}
If $j<i$,
\begin{align*}
	& f_{ij}^{(n)}(x_1,\ldots,x_{n-2},w,z,x;t) \\
	& \quad =\int_{0<s<r<t} drds f_{j-1}(x_1,\ldots,x_{j-1},z;s) g_{i-j-1}(x_j,\ldots,x_{i-2},z,w;s,r) g_{n-i}(x_{i-1},\ldots,x_{n-2},w,x;r,t)
\end{align*}
In both cases, $w$ is on position $i$ and $z$ is on position $j$. Consequently,
\begin{equation}
\label{D2-decomp}
D_{w,z}^2 u_{\theta}(t,x)
	=\sum_{n\geq 2} \theta^{n/2} \sum_{i,j=1,i\neq j}^n I_{n-2} \big( f_{ij}^{(n)}(\cdot,w,z,x;t) \big).
\end{equation}

\begin{theorem}	
\label{th-D2-bound}
a) For any $\theta>0$, $p\geq 2$, $0\leq t\leq T$ and $x,z,w \in \bR$,
\begin{equation}
\label{D2-bound}
	\|D_{w,z}^2 u_{\theta}(t,x)\|_p \leq C \widetilde{f}_2(w,z,t,x)
\end{equation}
where $C = 4\theta C_{6\theta(p-1),2,T,u} C_{6\theta(p-1),2,T,v}^2$.

b) For any $\theta>0$, $p\geq 2$, $0\leq t\leq T$ and $x,z,z',w,w' \in \bR$,
\begin{align}
\nonumber
& \left\| D_{z,y}^2 u_{\theta}(t,x)- D_{z,y+y'}^2 u_{\theta}(t,x)- D_{z+z',y}^2 u_{\theta}(t,x)+ D_{z+z',y+y'}^2 u_{\theta}(t,x) \right\|_p \\
\label{ineq-I2-QCLT-1}
& \quad \leq 4\theta \int_{0<s<r<t} \big(I_2(z,z',y,y',x;s,r,t) + I_2(y,y',z,z',x;s,r,t)\big) dsdr,
\end{align}
where $I_2(z,z',y,y',x;s,r,t)=\sum_{k=1}^4 I_{2,k}(z,z',y,y',x;s,r,t)$ and
\begin{align*}
& I_{2,1}(z,z',y,y',x;s,r,t) = \left\| u_{\eta}(s,z) - u_{\eta}(s,z+z') \right\|_2 \left\| v_{\eta}^{(s,z)} (r,y) \right\|_2 \left\| v_{\eta}^{(r,y)} (t,x) - v_{\eta}^{(r,y+y')} (t,x_3) \right\|_2 \\
& I_{2,2}(z,z',y,y',x;s,r,t) = \left\| u_{\eta}(s,z+z') \right\|_2 \left\| v_{\eta}^{(s,z)} (r,y) - v_{\eta}^{(s,z+z')} (r,y) \right\|_2 \left\| v_{\eta}^{(r,y)} (t,x) - v_{\eta}^{(r,y+y')} (t,x) \right\|_2 \\
& I_{2,3}(z,z',y,y',x;s,r,t) =\left\| u_{\eta}(s,z) - u_{\eta}(s,z+z') \right\|_2 \left\| v_{\eta}^{(s,z)} (r,y) - v_{\eta}^{(s,z)} (r,y+y') \right\|_2 \left\| v_{\eta}^{(r,y+y')} (t,x) \right\|_2 \\
& I_{2,4}(z,z',y,y',x;s,r,t) = \left\| u_{\eta}(s,z+z') \right\|_2 \\
& \quad \left\| v_{\eta}^{(s,z)} (r,y) - v_{\eta}^{(s,z)} (r,y+y') - v_{\eta}^{(s,z+z')} (r,y) + v_{\eta}^{(s,z+z')} (r,y+y') \right\|_2  \left\| v_{\eta}^{(r,y+y')} (t,x) \right\|_2.
\end{align*}
and $\eta=6(p-1)\theta$.

\end{theorem}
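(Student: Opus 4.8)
The plan is to establish part b) of Theorem \ref{th-D2-bound} by mirroring the strategy used for Theorem \ref{th-D-bound} b), but now carried out at the level of the \emph{second} Malliavin derivative, so that the rectangular increment in the two space variables $(z,y)$ decomposes into a triple tensor product rather than a double one. First I would apply the hypercontractivity principle (Lemma \ref{Lemma-hypercontractivity}) to reduce the $L^p(\Omega)$-norm to an $L^2(\Omega)$-norm with the shifted parameter $(p-1)\theta$; then, using the chaos decomposition \eqref{D2-decomp} together with orthogonality of the chaos spaces, I would write the squared $L^2$-norm of the rectangular increment as a sum over $n$ and over the index pairs $(i,j)$ with $i\ne j$ of terms $\|I_{n-2}(\Delta f_{ij}^{(n)})\|_2^2$, where $\Delta f_{ij}^{(n)}$ denotes the full rectangular increment of the kernel $f_{ij}^{(n)}$ in $(z,y)$. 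The crucial structural observation is that $f_{ij}^{(n)}$ factors as a triple tensor product $f_{i-1}(\cdot,z;s)\otimes g_{j-i-1}(\cdot,z,y;s,r)\otimes g_{n-j}(\cdot,y,x;r,t)$ (and symmetrically when $j<i$), integrated over $0<s<r<t$, exactly as recorded in the definition of $f_{ij}^{(n)}$ preceding the theorem.

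The next step is a telescoping decomposition of the rectangular increment of this triple product. Since the variable $z$ sits in the first two factors and the variable $y$ sits in the last two factors, the rectangular difference operator $\Delta_{z,z'}\Delta_{y,y'}$ applied to the product $A(z)\,B(z,y)\,C(y)$ splits into exactly four terms by writing each single increment additively (adding and subtracting mixed terms); this produces precisely the four summands $I_{2,1},\dots,I_{2,4}$ in the statement, where each summand carries either a plain factor, a single increment of $u$ or $v$, or the full rectangular increment of $v$ in its two variables. I would organize this bookkeeping so that each of the four resulting triple tensor products has one factor built from increments of $u_\eta$ (or $u_\eta$ itself), one from $v_\eta^{(s,z)}(r,y)$-type kernels (or their increments), and one from $v_\eta^{(r,y)}(t,x)$-type kernels (or their increments). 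The term $I_{2}(y,y',z,z',x;s,r,t)$ with the roles of the two variables swapped accounts for the contribution of the index pairs with $j<i$, using the second formula for $f_{ij}^{(n)}$.

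With this decomposition in hand, I would bound each of the four triple-tensor contributions using the second (triple-product) estimate in Lemma \ref{Lemma-decomp-F}, which converts $\sum_{p,q,k}\theta^{p+q+k}\|I_{p+q+k}(\int f_p\otimes g_q\otimes h_k\,\mu(dr))\|_2^2$ into the square of an integral of the product of three $L^2$-norms evaluated at the tripled parameter $3\theta$. This is exactly why the moment-comparison parameter appearing in the statement is $\eta=6(p-1)\theta=3\cdot 2(p-1)\theta$: the hypercontractivity step contributes the factor $2(p-1)\theta$ through $\tau=2(p-1)\theta$, and Lemma \ref{Lemma-decomp-F} triples it. Re-summing the chaos series for $u_\eta$, $v_\eta^{(s,z)}$ and $v_\eta^{(r,y)}$ then reassembles the three Malliavin-free norms, and after taking square roots and combining the four terms I obtain the claimed bound \eqref{ineq-I2-QCLT-1}, with the overall constant $4\theta$ emerging from the prefactor $\theta$ in \eqref{HAM} squared against the combinatorial factors $n(n-1)\le 2^{\,} \cdot (\text{something})$ controlled by the inequalities $n(n-1)\le 4^{n}$ and $\binom{\cdot}{\cdot}\le 2^{\cdot}$ absorbed into Lemma \ref{Lemma-decomp-F}.

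I expect the main obstacle to be the combinatorial and notational management of the telescoping increment: one must verify that the four-term expansion of the rectangular increment of the triple product lines up \emph{exactly} with $I_{2,1},\dots,I_{2,4}$ (including which factor is a plain norm, which is a single increment, and which is the genuine rectangular increment of $v$), and that the sum over the pair $(i,j)$ cleanly separates into the $i<j$ part (giving the $I_2(z,z',y,y',x;\cdot)$ block) and the $j<i$ part (giving the swapped block $I_2(y,y',z,z',x;\cdot)$). The analytic content is comparatively routine once the correct triple-tensor factorization is fixed and Lemma \ref{Lemma-decomp-F} is invoked; the delicacy lies entirely in carrying out the increment algebra without dropping or duplicating any of the mixed terms.
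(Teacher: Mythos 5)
Your proposal is correct and follows essentially the same route as the paper's proof: hypercontractivity to pass to the $L^2$-norm of $D^2 u_{(p-1)\theta}$, the chaos expansion \eqref{D2-decomp} with orthogonality and the splitting into the $i<j$ and $j<i$ blocks, the triple tensor factorization of $f_{ij}^{(n)}$ over $0<s<r<t$, the four-term telescoping of the rectangular increment of $a_z\otimes b_{z,y}\otimes c_y$ matching $I_{2,1},\dots,I_{2,4}$, and the triple-product estimate of Lemma \ref{Lemma-decomp-F} at the tripled parameter, yielding $\eta=3\cdot 2(p-1)\theta$. The only slip is cosmetic: the extra factor of $2$ comes from $n(n-1)\le 2^n$ (not $4^n$, which would give $\eta=12(p-1)\theta$), but you already state the correct $\eta=6(p-1)\theta$ and the correct provenance earlier, so the argument stands.
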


\begin{proof}
a) The proof is similar to the proof of Theorem \ref{th-D-bound}.a). We omit the details.

b) By Lemma \ref{Lemma-hypercontractivity}, \eqref{D2-decomp},
orthogonality, and the inequality $(\sum_{k=1}^{N} a_k)^2 \leq N \sum_{k=1}^N a_k^2$,
\begin{align}
& \left\| D_{z,y}^2 u_{\theta}(t,x)- D_{z,y+y'}^2 u_{\theta}(t,x)- D_{z+z',y}^2 u_{\theta}(t,x)+ D_{z+z',y+y'}^2 u_{\theta}(t,x) \right\|_p^2 \nonumber \\
& \quad \leq \dfrac{1}{(p-1)^2} \left\| D_{z,y}^2 u_{(p-1)\theta}(t,x)- D_{z,y+y'}^2 u_{(p-1)\theta}(t,x)- D_{z+z',y}^2 u_{(p-1)\theta}(t,x)+ D_{z+z',y+y'}^2 u_{(p-1)\theta}(t,x) \right\|_2^2 \nonumber \\
& \quad = \dfrac{1}{(p-1)^2} \sum_{n\geq 2} \big((p-1)\theta\big)^n \bigg\| \sum_{i,j=1,i\neq j}^n I_{n-2} \Big( f_{ij}^{(n)} (\cdot,z,y,x;t) - f_{ij}^{(n)} (\cdot,z,y+y',x;t) \nonumber \\
& \quad \quad \quad - f_{ij}^{(n)} (\cdot,z+z',y,x;t) + f_{ij}^{(n)} (\cdot,z+z',y+y',x;t) \Big) \bigg\|_2^2 \nonumber \\
& \quad \le \dfrac{1}{(p-1)^2} \sum_{n\geq 2} \big((p-1)\theta\big)^n n(n-1) \sum_{i,j=1,i\neq j}^n \bigg\| I_{n-2} \Big( f_{ij}^{(n)} (\cdot,z,y,x;t) - f_{ij}^{(n)} (\cdot,z,y+y',x;t) \nonumber \\
& \quad \quad \quad - f_{ij}^{(n)} (\cdot,z+z',y,x;t) + f_{ij}^{(n)} (\cdot,z+z',y+y',x;t) \Big) \bigg\|_2^2 =: \frac{1}{(p-1)^2} (S_1+S_2).
\label{S1-S2}
\end{align}
where $S_1,S_2$ denote the sums corresponding to $i<j$, respectively $j<i$.

We first treat the sum $S_1$. When $i<j$, we can write
\begin{align*}
	f_{ij}^{(n)} (\cdot,z,y,x;t) = \int_{0<s<r<t} f_{i-1}(\cdot,z;s) \otimes g_{j-i-1}(\cdot,z,y;s,r) \otimes g_{n-j}(\cdot,y,x;r,t) dsdr.
\end{align*}
Besides, we have the following decomposition (we pair the first two terms and the last two terms in the first step, and then pair the first and the third term in the second step):
\begin{align*}
	&a_z \otimes b_{z,y} \otimes c_y - a_z \otimes b_{z,y+y'} \otimes c_{y+y'} - a_{z+z'} \otimes b_{z+z',y} \otimes c_y + a_{z+z'} \otimes b_{z+z',y+y'} \otimes c_{y+y'} \\
	=& a_z \otimes b_{z,y} \otimes (c_y -c_{y+y'}) + a_z \otimes (b_{z,y} - b_{z,y+y'}) \otimes c_{y+y'} \\
	&- a_{z+z'} \otimes b_{z+z',y} \otimes (c_y -c_{y+y'}) - a_{z+z'} \otimes (b_{z+z',y} - b_{z+z',y+y'}) \otimes c_{y+y'} \\
	=& (a_z - a_{z+z'}) \otimes b_{z,y} \otimes (c_y -c_{y+y'}) + a_{z+z'} \otimes (b_{z,y} - b_{z+z',y}) \otimes (c_y -c_{y+y'}) \\
	& + (a_z-a_{z+z'}) \otimes (b_{z,y} - b_{z,y+y'}) \otimes c_{y+y'} + a_{z+z'} \otimes (b_{z,y} - b_{z,y+y'} - b_{z+z',y} + b_{z+z',y+y'}) \otimes c_{y+y'}.
\end{align*}
Hence, for any $1\leq i<j\leq n$,
\begin{align*}
	&f_{ij}^{(n)} (\cdot,z,y,x;t) - f_{ij}^{(n)} (\cdot,z,y+y',x;t) - f_{ij}^{(n)} (\cdot,z+z',y,x;t) + f_{ij}^{(n)} (\cdot,z+z',y+y',x;t) \\
	& \qquad \qquad \qquad \qquad \qquad \qquad =\sum_{k=1}^4 \int_{0<s<r<t}A_{ij,k}^{(n)}(s,r)dsdr,
\end{align*}
where
\begin{align*}
A_{ij,1}^{(n)}(s,r)& :=\big( f_{i-1}(\cdot,z;s) - f_{i-1}(\cdot,z+z';s) \big) \otimes g_{j-i-1}(\cdot,z,y;s,r) \\
	& \otimes \big( g_{n-j}(\cdot,y,x;r,t) - g_{n-j}(\cdot,y+y',x;r,t) \big) \\
A_{ij,2}^{(n)}(s,r)& := f_{i-1}(\cdot,z+z';s) \otimes \big( g_{j-i-1}(\cdot,z,y;s,r) - g_{j-i-1}(\cdot,z+z',y;s,r) \big) \\
	& \otimes \big( g_{n-j}(\cdot,y,x;r,t) - g_{n-j}(\cdot,y+y',x;r,t) \big) \\
A_{ij,3}^{(n)}(s,r)& := \big( f_{i-1}(\cdot,z;s) - f_{i-1}(\cdot,z+z';s) \big) \otimes \big( g_{j-i-1}(\cdot,z,y;s,r) - g_{j-i-1}(\cdot,z,y+y';s,r) \big) \\
	& \otimes g_{n-j}(\cdot,y+y',x;r,t) \\
A_{ij,4}^{(n)}(s,r)& := f_{i-1}(\cdot,z+z';s) \otimes \big( g_{j-i-1}(\cdot,z,y;s,r) - g_{j-i-1}(\cdot,z,y+y';s,r) - g_{j-i-1}(\cdot,z+z',y;s,r) \\
	& + g_{j-i-1}(\cdot,z+z',y+y';s,r) \big) \otimes g_{n-j}(\cdot,y+y',x;r,t)
\end{align*}
Using inequality $n(n-1) \le 2^n$ and the inequality $(\sum_{k=1}^4 a_k)^4 \leq 4 \sum_{k=1}^4 a_k^2$
we have
\begin{align*}
S_1	& \le  \sum_{n\geq 2}\sum_{1\le i<j \le n} \big(2(p-1)\theta\big)^{n} \bigg\| I_{n-2} \Big( f_{ij}^{(n)} (\cdot,z,y,x;t) - f_{ij}^{(n)} (\cdot,z,y+y',x;t) - f_{ij}^{(n)} (\cdot,z+z',y,x;t)  \\
	& \qquad \qquad \qquad \qquad \qquad+ f_{ij}^{(n)} (\cdot,z+z',y+y',x;t) \Big) \bigg\|_2^2 \\
& \leq 16 (p-1)^2 \theta^2  \sum_{k=1}^4 \sum_{n\geq 2} \big(2(p-1)\theta\big)^{n-2}\sum_{1\le i<j \le n}  \Big\| I_{n-2} \Big( \int_{0<s<r<t} A_{ij,k}^{(n)}(s,r)drdr \Big) \Big\|_2^2.
\end{align*}

We now use the fact that for any $\theta>0$ and for any functions $h_n,h_n',h_n''$ for which the integrals below are well-defined, by Lemma \ref{Lemma-decomp-F}, we have:
\begin{align*}
& \sum_{n\geq 2}\theta^{n-2} \sum_{i,j=1,i<j}^n \left\| I_{n-2}\left(\int_{0<s<r<t} h_{j-1}(\cdot,s)\otimes h_{j-i-1}'(\cdot,s,r)\otimes h_{n-j}''(\cdot,r)dsdr \right)\right\|_2^2\\
& = \sum_{p \geq 0}\sum_{q\geq 0} \sum_{k\geq 0}\theta^{p+q+k}\left\| I_{p+q+k}\left(\int_{0<s<r<t} h_{p}(\cdot,s)\otimes h_{q}'(\cdot,s,r)\otimes h_{k}''(\cdot,r)dsdr \right)\right\|_2^2\\
&\leq \left(\int_{0<s<r<t} \left\| \sum_{p\geq 0}(3\theta)^{p/2} I_p(h_p(\cdot,s))\right\|_2
\left\| \sum_{q\geq 0}(3\theta)^{q/2} I_q(h_q'(\cdot,s,r))\right\|_2
\left\| \sum_{k\geq 0}(3\theta)^{k/2} I_k(h_k''(\cdot,r))\right\|_2
dsdr \right)^2.
\end{align*}

Using the chaos decompositions of $u_{\theta}(t,x)$ and $v_{\theta}^{(r,z)}(t,x)$, we infer that:
\[
S_1 \le   16 (p-1)^2 \theta^2 \sum_{k=1}^4 \left(\int_{0<s<r<t}
 I_{2,k}(z,z',y,y',x;s,r,t) dsdr  \right)^2.
 \]

Using the inequality $\sum_{i=1}^4 a_i^2 \le (\sum_{i=1}^4 a_i)^2$ for $a_i>0$, we obtain:
\begin{equation}
\label{S1}
S_1 \leq  \left( 4(p-1) \theta \int_{0<s<r<t}
I_{2}(z,z',y,y',x;s,r,t) dsdr  \right)^2.
\end{equation}

A similar formula holds for the sum $S_2$ (which corresponds to the case $j<i$), which is obtained by swapping $(y,y',i)$ and $(z,z',j)$:
\begin{equation}
\label{S2}
S_2 \leq \left( 4(p-1) \theta \int_{0<s<r<t} I_2(y,y',z,z',x_3;s,r,t) dsdr \right)^2.
\end{equation}

Relation \eqref{ineq-I2-QCLT-1} follows from \eqref{S1-S2}, \eqref{S1} and \eqref{S2}.
\end{proof}

\section{Increments of $u_{\theta}$ and $v_{\theta}$}
\label{section-increm}

In this section, study some integrals involving the increments of the solutions $u_{\theta}$ and $v_{\theta}$, which will be used in the proof of the QCLT. A key role is played by some stationarity properties of these processes.

The first result examines the increments of $u_{\theta}$.

\begin{lemma}
\label{Lem-int-u-u}
For any $\theta>0$, $p \geq 2$ and $T>0$,
\begin{align*}
	C'_{u,p,H,T,\theta} := \sup_{(t,x) \in [0,T] \times \bR} \int_{\bR}\|u_{\theta}(t,x)-u_{\theta}(t,x+h)\|_p^2 |h|^{2H-2}dh<\infty.
\end{align*}
\end{lemma}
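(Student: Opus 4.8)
The quantity to control is a weighted spatial integral of the squared $L^p(\Omega)$-norm of the increment $u_\theta(t,x)-u_\theta(t,x+h)$, with weight $|h|^{2H-2}$ (the spectral density weight from $\cP_0$, here in physical space). My first move is to reduce the $L^p$ bound to an $L^2$ bound via hypercontractivity: by \eqref{hyper} (or the moment-comparison Lemma referenced as \texttt{Lemma-hypercontractivity}),
\[
\|u_\theta(t,x)-u_\theta(t,x+h)\|_p^2 \le \frac{1}{p-1}\|u_{(p-1)\theta}(t,x)-u_{(p-1)\theta}(t,x+h)\|_2^2,
\]
so it suffices to bound the integral for the $L^2$-norm (with $\theta$ replaced by $(p-1)\theta$, which only changes the final constant). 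This reduces everything to a computation with the chaos expansion \eqref{series-conv-u}.

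**Chaos expansion and the spectral identity.** Using \eqref{series-conv-u} and orthogonality of chaoses,
\[
\|u_\theta(t,x)-u_\theta(t,x+h)\|_2^2 = \sum_{n\ge 1}\theta^n n!\,\big\|\widetilde f_n(\cdot,x;t)-\widetilde f_n(\cdot,x+h;t)\big\|_{\cP_0^{\otimes n}}^2.
\]
The key structural fact is translation: shifting $x$ by $h$ multiplies the Fourier transform $\cF\widetilde f_n(\cdot,x;t)(\pmb\xi)$ by $e^{-i(\xi_1+\dots+\xi_n)h}$. Hence in the spectral representation of the $\cP_0^{\otimes n}$-norm, the increment produces a factor $|e^{-i(\xi_1+\dots+\xi_n)h}-1|^2$. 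After integrating $\int_\bR |h|^{2H-2}|e^{-i\beta h}-1|^2\,dh = c_H'|\beta|^{1-2H}$ (the defining Fourier identity of the fractional weight, valid since $2H-2\in(-3/2,-1)$ and the integral converges), with $\beta=\xi_1+\dots+\xi_n$, the $h$-integral is eliminated and I am left with
\[
\int_\bR \|u_\theta(t,x)-u_\theta(t,x+h)\|_2^2\,|h|^{2H-2}\,dh \lesssim \sum_{n\ge1}\theta^n n!\,c_H^n\!\int_{\bR^n}\!\big|\cF\widetilde f_n(\cdot,x;t)(\pmb\xi)\big|^2 |\xi_1+\dots+\xi_n|^{1-2H}\prod_{j=1}^n|\xi_j|^{1-2H}\,d\pmb\xi.
\]

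**Estimating the series.** The extra factor $|\xi_1+\dots+\xi_n|^{1-2H}=|\eta_n|^{1-2H}$ (in the variables $\eta_j=\xi_1+\dots+\xi_j$ used in the proof of Theorem \ref{Thm-exist-u}) is exactly one more spectral weight than appears in the bound \eqref{bound-f}. I would repeat the argument leading to \eqref{bound-f}: apply Cauchy--Schwarz in the time simplex, pass to the $\eta$-variables, use $|a+b|^{1-2H}\le|a|^{1-2H}+|b|^{1-2H}$ together with the combinatorial identity \eqref{prod} (now for the product of $n+1$ factors, the extra one being $|\eta_n|^{1-2H}$), and evaluate each one-dimensional integral by \eqref{G-int}. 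The extra weight raises one exponent from $1-2H$ to $2(1-2H)$ on a single $\eta_n$-integral, which still converges since $2(1-2H)<1\Leftrightarrow H>1/4$; this is where the hypothesis $H\in(1/4,1/2)$ is used. The outcome is a bound of the form $C^n t^{(2H+2)n}/(n!)^{2H+2}$ times possibly a harmless polynomial factor in $n$ (absorbed into $C^n$), so that after multiplying by $\theta^n n!$ the series is summable uniformly for $(t,x)\in[0,T]\times\bR$ by \eqref{est1}, and the supremum is finite.

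**Main obstacle.** The delicate point is verifying that the single additional spectral factor $|\eta_n|^{1-2H}$ does not destroy summability: one must track it through the simplex/time integrals and confirm that it only ever lands on integrals of the type \eqref{G-int} with exponent still strictly below $1$. The combinatorial bookkeeping of \eqref{prod} must be redone with the extra factor attached, and I expect the cleanest route is to absorb $|\eta_n|^{1-2H}$ into the last factor before applying \eqref{prod}, so that at most one exponent is doubled and the constant $c_{2(1-2H)}$ from \eqref{G-int} appears once per term. Everything else is the routine repetition of the Theorem \ref{Thm-exist-u} estimate, and the uniformity in $x$ is automatic because the $h$-integration removes all $x$-dependence from the spectral integrand.
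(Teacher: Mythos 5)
Your proposal is correct, and it reaches the conclusion by a genuinely different organization of the argument than the paper's proof. The paper keeps the $h$-integral to the very end: it splits $\int_{\bR}$ into $|h|>1$ (handled by the uniform moment bound of Theorem \ref{Thm-exist-u} and integrability of $|h|^{2H-2}$ at infinity) and $|h|\le 1$, where it derives a pointwise modulus of continuity $\|u_\theta(t,x)-u_\theta(t,x+h)\|_p\le C(|h|^{1/2}+|h|^{H})$ by keeping the factor $|1-e^{-i\eta_n h}|^2$ inside the spectral integral and bounding $\int_{\bR}|1-e^{-i\eta h}|^2(1+|\eta|^{1-2H})|\eta|^{-2}d\eta\lesssim |h|+|h|^{2H}$; the condition $H>1/4$ then enters through $\int_{|h|\le 1}|h|^{4H-2}dh<\infty$. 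You instead integrate out $h$ first via the exact identity $\int_{\bR}|1-e^{-i\beta h}|^2|h|^{2H-2}dh=c|\beta|^{1-2H}$, converting the weight into one extra spectral factor $|\eta_n|^{1-2H}$ and reducing the lemma to a variant of the estimate \eqref{bound-f} in which the exponent on $\eta_n$ is at most $2(1-2H)<1$ — the same place $H>1/4$ is already used in Theorem \ref{Thm-exist-u}. Both routes rest on the same ingredients (chaos expansion, hypercontractivity, the change of variables $\eta_j=\xi_1+\cdots+\xi_j$, inequality \eqref{prod}, and \eqref{G-int}); yours avoids the case split and the loss of the factor $\sin^2((t-t_n)|\eta_n|)$, while the paper's yields as a byproduct an explicit spatial H\"older-type modulus for $u_\theta$, which your computation does not produce. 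Two cosmetic points: the prefactor in your hypercontractivity reduction should be $1$ rather than $\tfrac{1}{p-1}$ (Lemma \ref{Lemma-hypercontractivity}(a) gives $\|u_\theta(t,x+h)-u_\theta(t,x)\|_p\le\|u_{(p-1)\theta}(t,x+h)-u_{(p-1)\theta}(t,x)\|_2$ with no constant), which is immaterial since you only need finiteness; and the passage from $\widetilde f_n$ to $f_n$ under the augmented (still symmetric) weight requires the same Jensen/Cauchy--Schwarz step as \eqref{rough-bound}, which goes through because the extra factor $|\xi_1+\cdots+\xi_n|^{1-2H}$ is a symmetric function of $\pmb{\xi}$.
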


\begin{proof}
By Theorem \ref{Thm-exist-u}, since $H < 1/2$, we have:
\begin{align*}
	\sup_{(t,x) \in [0,T]\times \bR} \int_{|h|>1}\|u_{\theta}(t,x)-u_{\theta}(t,x+h)\|_p^2 |h|^{2H-2}dh
	\leq 2C_{\theta,p,T,u} \int_{|h|>1}|h|^{2H-2}dh<\infty.
\end{align*}
It remains to treat the integral over the set $|h|\leq 1$.

We use the chaos expansion $u_{\theta}(t,x)-u_{\theta}(t,x+h) = \sum_{n\geq 1} \theta^{n/2} I_n(f_{n,h}(\cdot,x;t))$ where $f_{n,h}(\cdot,x;t)=f_{n}(\cdot,x;t)-f_{n}(\cdot,x+h;t)$. By hypercontractivity property \eqref{hyper}, 
\begin{align}
\label{bound-dif-u}
	\|u(t,x)-u(t,x+h)\|_p
	\leq \sum_{n\geq 1} (p-1)^{n/2} \theta^{n/2} \|I_n(f_{n,h}(\cdot,x;t))\|_2
	= \sum_{n\geq 1} (p-1)^{n/2} \theta^{n/2} [J_{n,h}(t)]^{1/2}
\end{align}
where $J_{n,h}(t) = n! \|\widetilde{f}_{n,h}(\cdot,x;t) \|_{\cP_0^{\otimes n}}^2$. Note that
\[
f_{n,h}(x_1,\ldots,x_n,x;t)=\int_{T_n(t)}f_{n,h}(t_1,x_1,\ldots,t_n,x_n,t,x)
d{\bf t},
\]
where $f_{n,h}(\cdot,t,x)=f_{n}(\cdot,t,x)-f_{n}(\cdot,t,x+h)$ and $f_n(\cdot,t,x)$ is given by \eqref{def-fn-time-dep}.
Using the rough bound \eqref{rough-bound}, 
followed by the Cauchy-Schwarz inequality, we have
\begin{align*}
	&J_{n,h}(t)
	\le n! \|f_{n,h}(\cdot,x;t) \|_{\cP_0^{\otimes n}}^2 \\
	=& c_H^n n! \int_{\bR^n} \left| \int_{T_n(t)} \cF f_{n,h}(t_1,\cdot,\ldots,t_n,\cdot,t,x)(\xi_1,\ldots,\xi_n) d\pmb{t} \right|^2 \prod_{j=1}^n |\xi_j|^{1-2H} d\pmb{\xi} \\
	\le& c_H^n t^n \int_{T_n(t)} \int_{\bR^n} \left| \cF f_{n,h}(t_1,\cdot,\ldots,t_n,\cdot,t,x)(\xi_1,\ldots,\xi_n) \right|^2 \prod_{j=1}^n |\xi_j|^{1-2H} d\pmb{\xi} d\pmb{t} \\
	\le& c_H^n t^n \int_{T_n(t)} \int_{\bR^n} |1-e^{-i(\xi_1+\ldots+\xi_n)h}|^2 \prod_{j=1}^n |\cF G_{t_{j+1}-t_j}(\xi_1+\ldots+\xi_j)|^2 \prod_{j=1}^n |\xi_j|^{1-2H} d\pmb{\xi} d\pmb{t} \\
	=& c_H^n t^n \int_{T_n(t)} \int_{\bR^n} |1-e^{-i\eta_n h}|^2 \prod_{j=1}^n |\cF G_{t_{j+1}-t_j}(\eta_j)|^2 \prod_{j=1}^n |\eta_j-\eta_{j-1}|^{1-2H} d\pmb{\eta} d\pmb{t},
\end{align*}
where we change the variable $\xi_1 + \ldots + \xi_j = \eta_j$ with the convention $t_{n+1}=t$ and $\eta_0=0$.

The product above is estimated as usually, using inequality \eqref{prod}. Then, using identity \eqref{G-int}, we obtain:
\begin{align*}
 J_{n,h}(t) &\leq c_H^n t^n \sum_{\alpha \in D_n} \int_{T_n(t)} \int_{\bR^n} |1-e^{-i\eta_n h}|^2 \prod_{j=1}^{n}\int_{\bR}|\cF G_{t_{j+1}-t_j}(\eta_j)|  \prod_{j=1}^n|\eta_j|^{\alpha_j}d\pmb{\eta} d\pmb{t}\\
& \leq c^n t^n \sum_{\alpha \in D_n} \int_0^t
\left( \int_{\bR}|1-e^{-i\eta_n h}|^2  |\cF G_{t-t_n}(\eta_n)|^2 |\eta_n|^{\alpha_n}\eta_n\right) \\
& \quad \quad \quad \left(\int_{T_{n-1}(t_n)}
\prod_{j=1}^{n-1}(t_{j+1}-t_j)^{1-\alpha_j}dt_1 \ldots dt_{n-1}\right) dt_n\\
& \leq c_{t,H} \frac{c^n t^n }{(n!)^{2H+1}} \int_0^t \int_{\bR}
|1-e^{-i\eta_n h}|^2  (1+|\eta_n|^{1-2H}) \frac{\sin^2((t-t_n)|\eta_n|)}{|\eta_n|^2} t_n^{(2H+1)n-2}d\eta_n dt_n,
\end{align*}
where $c_{t,H}=t^{1-2H}+1$ and $c>0$ is a constant depending on $H$
(which may be different from line to line).
Using the bound
$\sin^2((t-t_n)|\eta_n|) \leq 1$, we obtain:
\begin{align*}
J_{n,h}(t) & \leq c_{t,H} \frac{c^n t^n}{(n!)^{2H+1}} t^{(2H+1)n-1}
\int_{\bR}
|1-e^{-i\eta h}|^2 (1+|\eta|^{1-2H}) \frac{1}{|\eta|^2} d\eta \\
& \leq c_{t,H}\frac{c^{n} }{(n!)^{2H+1}} t^{(2H+2)n-1}  (|h|+|h|^{2H}).
\end{align*}
The estimate above is now inserted in \eqref{bound-dif-u}. We obtain:
\begin{align*}
\|u(t,x)-u(t,x+h)\|_p & \leq c_{t,H}^{1/2} \sum_{n\geq 1}(p-1)^{n/2}\theta^{n/2} \frac{c^{n/2}}{(n!)^{H+1/2}} t^{(H+1)n-1/2} (|h|^{1/2}+|h|^{H}) \\
& =: C_{t,p,\theta,H} (|h|^{1/2}+|h|^{H}),
\end{align*}
where $C_{t,p,\theta,H}>0$ is a constant which depends in $t,p,\theta,H$ and is increasing in $t$. Hence,
\begin{align*}
\int_{|h|\leq 1}\|u(t,x)-u(t,x+h)\|_p^2 |h|^{2H-2}dh
\leq  C_{t,p,\theta,H}^2 \int_{|h|\leq 1} (|h|+|h|^{2H}) |h|^{2H-2}dh.
\end{align*}
The last integral is finite due to the condition $H > 1/4$.
\end{proof}

The following lemma gives some translation invariance properties of $u_{\theta}$ and $v_{\theta}$.

\begin{lemma}
\label{translation-lemma}
For any $\theta>0$, $0\leq r \leq t$, $x, z, h \in \bR$, we have:
\begin{align*}
& u_{\theta}(t,x)-u_{\theta}(t,x+h) \stackrel{d}{=}
u_{\theta}(t,0)-u_{\theta}(t,h) \\
& v_{\theta}^{(r,z)}(t,x)-v_{\theta}^{(r,z+h)}(t,x) \stackrel{d}{=}
v_{\theta}^{(r,0)}(t,x-z)-v_{\theta}^{(r,h)}(t,x-z)\\
& v_{\theta}^{(r,z)}(t,x)-v_{\theta}^{(r,z)}(t,x+h) \stackrel{d}{=}
v_{\theta}^{(r,0)}(t,x-z)-v_{\theta}^{(r,0)}(t,x+h-z)
\end{align*}
Moreover, for the rectangular difference, we have:
\begin{align}
\label{eq-transition-vv}
& v^{(r,z)}_{\theta}(t,x) - v^{(r,z+h)}_{\theta}(t,x)-v^{(r,z)}_{\theta}(t,x+k) + v^{(r,z+h)}_{\theta}(t,x+k) \stackrel{d}{=} \nonumber \\
& \quad  v^{(r,0)}_{\theta}(t,x-z) - v^{(r,h)}_{\theta}(t,x-z)- v^{(r,0)}_{\theta}(t,x+k-z)  + v^{(r,h)}_{\theta}(t,x+k-z).
\end{align}
\end{lemma}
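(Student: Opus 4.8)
The plan is to derive all four identities from a single structural mechanism: the noise $W$ is invariant in law under spatial translations, and the chaos kernels of $u_{\theta}$ and $v_{\theta}$ transform covariantly under such translations. First I would record the translation invariance of the noise. For $a\in\bR$, define the shift $(\tau_a\varphi)(y)=\varphi(y-a)$ on $\cD(\bR)$. Since $\cF(\tau_a\varphi)(\xi)=e^{-ia\xi}\cF\varphi(\xi)$, the phase cancels in the spectral covariance, so $\langle \tau_a\varphi,\tau_a\psi\rangle_{\cP_0}=\langle\varphi,\psi\rangle_{\cP_0}$. Hence $\tau_a$ extends to a unitary operator on the completion $\cP_0$ (with inverse $\tau_{-a}$), and its tensor powers $\tau_a^{\otimes n}$ are unitary on $\cP_0^{\otimes n}$. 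Consequently $\varphi\mapsto W(\tau_a\varphi)$ is again an isonormal process with the same covariance as $W$, so it has the same law, and the multiple integral with respect to the shifted process satisfies $I_n^{W(\tau_a\cdot)}(f)=I_n(\tau_a^{\otimes n}f)$ (checked first on symmetric tensors $\varphi^{\otimes n}$ through the Hermite-polynomial representation and $\|\tau_a\varphi\|_{\cP_0}=\|\varphi\|_{\cP_0}$, then extended by density). This yields the covariance property
\[
\{I_n(\tau_a^{\otimes n}f_n)\}_{n\geq 1}\stackrel{d}{=}\{I_n(f_n)\}_{n\geq 1}\quad\text{jointly in }n,
\]
valid for every sequence of kernels $(f_n)$.

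Next I would note that the kernels are translation covariant in their spatial arguments. Since $f_n(x_1,\ldots,x_n,x;t)$ from \eqref{def-fk}--\eqref{def-fn-time-dep} and $g_n(x_1,\ldots,x_n,z,x;r,t)$ from \eqref{def-gk} are products of factors $G_{\cdot}(\text{differences of spatial variables})$, they are invariant under a simultaneous shift of all spatial variables. Translating this invariance into the shift operator gives the identities
\[
\tau_a^{\otimes n}f_n(\cdot,x;t)=f_n(\cdot,x+a;t),\qquad \tau_a^{\otimes n}g_n(\cdot,z,x;r,t)=g_n(\cdot,z+a,x+a;r,t).
\]

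Each asserted identity then follows by choosing the shift that normalizes the relevant lower index. For the first identity, apply the shift $a=x$: in the chaos expansion \eqref{series-conv-u} of $u_{\theta}(t,x)-u_{\theta}(t,x+h)$, write $f_n(\cdot,x;t)=\tau_x^{\otimes n}f_n(\cdot,0;t)$ and $f_n(\cdot,x+h;t)=\tau_x^{\otimes n}f_n(\cdot,h;t)$; the covariance property (with the zeroth chaos contributing $1-1=0$ on each side) gives the stated equality with $u_{\theta}(t,0)-u_{\theta}(t,h)$. For the three $v$-identities, including the rectangular one, I would apply the shift $a=z$ in the expansion \eqref{chaos-v}, using $g_n(\cdot,z+c,x+d;r,t)=\tau_z^{\otimes n}g_n(\cdot,c,x+d-z;r,t)$ for the four combinations of $c\in\{0,h\}$ and shifts $d\in\{0,k\}$. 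Here the deterministic zeroth-chaos terms $G_{t-r}(\cdot)$ coincide exactly on both sides (the same simultaneous-shift invariance applies to the single factor $G_{t-r}$), while the random parts agree in law by the covariance property; since adding equal constants preserves equality in distribution, the full differences agree in law, which is precisely \eqref{eq-transition-vv} and its lower-order analogues.

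The main obstacle is the rigorous justification of the covariance property, namely that a unitary of $\cP_0$ lifts to a measure-preserving transformation of Wiener space under which $n$th multiple integrals transform by $\tau_a^{\otimes n}$. This is the standard second-quantization argument, but one must be careful that $\tau_a$ is genuinely well defined on the \emph{abstract} completion $\cP_0$, whose elements need not be functions; this is handled by extending $\tau_a$ from the dense subspace $\cD(\bR)$ by isometry, as above. Once this is in place, the remaining work is purely bookkeeping with the chaos expansions, and no new estimates are needed.
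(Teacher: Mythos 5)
Your proposal is correct and follows essentially the same route as the paper: the paper's proof is a one-line appeal to the chaos expansions together with the spatial homogeneity of $W$ (i.e.\ $W\stackrel{d}{=}W^{(x)}$ with $W^{(x)}(\varphi)=W(\varphi(\cdot-x))$), which is exactly the unitarity-of-$\tau_a$ and kernel-covariance argument you spell out in detail. Your verification that $\tau_a^{\otimes n}f_n(\cdot,x;t)=f_n(\cdot,x+a;t)$ and $\tau_a^{\otimes n}g_n(\cdot,z,x;r,t)=g_n(\cdot,z+a,x+a;r,t)$, and your care about extending $\tau_a$ to the abstract completion $\cP_0$, are accurate fillings-in of what the paper leaves implicit.
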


\begin{proof}
The lemma follows using the chaos expansions of these differences and the fact that the noise $W$ is spatially homogeneous, i.e.
$W \stackrel{d}{=}W^{(x)}$ for any $x \in \bR$, where
$W^{(x)}(\varphi)=W(\varphi(\cdot-x))$.
\end{proof}

\begin{remark}
\label{trans-rem}
{\rm For any $r\in [0,t], h \in \bR$, the following integrals do not depend on $x,z\in \bR$:
\begin{align}
\label{xz-integrals}
& \int_{\bR}\left\| v_{\theta}^{(r,z)}(t,x') - v_{\theta}^{(r,z+h)}(t,x') \right\|_p^{q} dx'=
\int_{\bR}\left\| v_{\theta}^{(r,z')}(t,x) - v_{\theta}^{(r,z'+h)}(t,x) \right\|_p^{q} dz'= I_{r,t,h}^{(\theta,p,q)}, \\
\nonumber
& \int_{\bR}\left\| v_{\theta}^{(r,z)}(t,x')-v_{\theta}^{(r,z)}(t,x'+h)  \right\|_p^{q} dx'=
\int_{\bR}\left\| v_{\theta}^{(r,z')}(t,x)-v_{\theta}^{(r,z')}(t,x+h)  \right\|_p^{q} dz'= J_{r,t,h}^{(\theta,p,q)},
\end{align}
where
\begin{align*}
I_{r,t,h}^{(\theta,p,q)} =\int_{\bR}\left\| v_{\theta}^{(r,0)}(t,x) - v_{\theta}^{(r,h)}(t,x) \right\|_p^{q} dx, \quad
J_{r,t,h}^{(\theta,p,q)}=\int_{\bR}\left\| v_{\theta}^{(r,0)}(t,x) - v_{\theta}^{(r,0)}(t,x+h) \right\|_p^{q} dx.
\end{align*}
To see this, we apply Lemma \ref{translation-lemma}, followed by the changes of variables $x''=x'-z$ (for the integrals on the left), respectively $z''=x-z'$ (for the integrals on the right).
Moreover, the same argument shows that the two integrals below also do not depend on $x,z \in \bR$:
\begin{align*}
& \int_{\bR} \left\| v^{(r,z)}_{\theta}(t,x') - v^{(r,z+h)}_{\theta}(t,x')-v^{(r,z)}_{\theta}(t,x'+k) + v^{(r,z+h)}_{\theta}(t,x'+k) \right\|_p^q dx'  \\
& = \int_{\bR} \left\| v^{(r,z')}_{\theta}(t,x) - v^{(r,z'+h)}_{\theta}(t,x)-v^{(r,z')}_{\theta}(t,x+k) + v^{(r,z'+h)}_{\theta}(t,x+k) \right\|_p^q dz'.
\end{align*}
}
\end{remark}

The next result examines the increments of $v_{\theta}$. Its proof uses the connection between $v_{\theta}$ and $V_{\theta}$, and the fact that similar inequalities hold for $V_{\theta}$.

\begin{lemma}
\label{v-lem1}
For any $\theta>0$, $p\geq 2$ and $t>0$,
\begin{align*}
a) &	\quad	\sup_{r \in [0,t]}\sup_{z \in \bR}  \int_{\bR^2}\left\| v_{\theta}^{(r,z)}(t,x) - v_{\theta}^{(r,z+h)}(t,x) \right\|_p^2 |h|^{2H-2} dx  dh \leq C_{v,p,H,t,\theta}, \\
b) & \quad \sup_{r \in [0,t]} \sup_{x \in \bR} \int_{\bR^2}  \left\| v_{\theta}^{(r,z)}(t,x) - v_{\theta}^{(r,z+h)}(t,x) \right\|_p^2 |h|^{2H-2} dz   dh \leq C_{v,p,H,t,\theta}, \\
c) &	\quad	\sup_{r \in [0,t]}\sup_{z \in \bR}  \int_{\bR^2}\left\| v_{\theta}^{(r,z)}(t,x) - v_{\theta}^{(r,z)}(t,x+h) \right\|_p^2 |h|^{2H-2} dx  dh \leq C_{v,p,H,t,\theta},\\
d) &	\quad	\sup_{r \in [0,t]}\sup_{x \in \bR}  \int_{\bR^2}\left\| v_{\theta}^{(r,z)}(t,x) - v_{\theta}^{(r,z)}(t,x+h) \right\|_p^2 |h|^{2H-2} dz  dh \leq C_{v,p,H,t,\theta},\\
e) & \quad \sup_{r \in [0,t]} \sup_{z\in \bR} \int_{\bR^3}\big\|v_{\theta}^{(r,z)}(t,x)-
v_{\theta}^{(r,z+h)}(t,x)-v_{\theta}^{(r,z)}(t,x+k)+
v_{\theta}^{(r,z+h)}(t,x+k)\big\|_p^2 \\
& \qquad \quad \quad \quad \qquad |h|^{2H-2}
|k|^{2H-2}dxdh dk  \leq C_{v,p,H,t,\theta},\\
f) & \quad \sup_{r \in [0,t]} \sup_{x\in \bR} \int_{\bR^3}\big\|v_{\theta}^{(r,z)}(t,x)-
v_{\theta}^{(r,z+h)}(t,x)-v_{\theta}^{(r,z)}(t,x+k)+
v_{\theta}^{(r,z+h)}(t,x+k)\big\|_p^2 \\
& \qquad \quad \quad \quad \qquad |h|^{2H-2}
|k|^{2H-2}dzdh dk  \leq C_{v,p,H,t,\theta},
\end{align*}
where $C_{v,p,H,t,\theta}>0$ is a constant that depends on $(p,H,t,\theta)$ and is increasing in $t$.
\end{lemma}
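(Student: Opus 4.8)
The plan is to deduce all six bounds from the corresponding estimates for the white-in-time model $V_\theta$, which are established in Appendix \ref{section-appA} and are far more tractable because of the extra time integration. The passage from $v_\theta$ to $V_\theta$ is organized in three steps: a translation-invariance step collapsing the six statements into three, a hypercontractivity step reducing the $L^p$-norm to the $L^2$-norm, and a kernel-comparison step replacing $v_\theta$ by $V_\theta$.

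I would begin by invoking the translation-invariance identities of Remark \ref{trans-rem}. By the two equalities in \eqref{xz-integrals} with $q=2$, for fixed $r$ and $h$ the inner integral $\int_{\bR}\|v_\theta^{(r,z)}(t,x)-v_\theta^{(r,z+h)}(t,x)\|_p^2\,dx$ does not depend on $z$ and coincides with $\int_{\bR}\|v_\theta^{(r,z')}(t,x)-v_\theta^{(r,z'+h)}(t,x)\|_p^2\,dz'$, which does not depend on $x$; both equal $I_{r,t,h}^{(\theta,p,2)}$. Hence statements a) and b) are literally the same quantity $\int_{\bR}I_{r,t,h}^{(\theta,p,2)}|h|^{2H-2}\,dh$. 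The same remark identifies c) with d) through $J_{r,t,h}^{(\theta,p,2)}$ and, via its final display, e) with f). It therefore suffices to bound the three quantities obtained from a single fixed base point.

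Next I would apply the moment comparison of Lemma \ref{Lemma-hypercontractivity}. Each of the three relevant increments --- the increment in $z$, the increment in $x$, and the rectangular increment --- is a chaos series $\sum_{n\ge0}\theta^{n/2}I_n(\kappa_n)$ with $\theta$-independent kernels: the $\kappa_n$ for $n\ge1$ are the corresponding differences (resp.\ rectangular differences) of the functions $g_n$, while the $n=0$ term is the analogous difference of $G_{t-r}$. Therefore, for each fixed configuration of base points, $\|\,\cdot\,\|_p^2\le\|\,\cdot\,\|_2^2$ upon replacing $\theta$ by $(p-1)\theta$, and we are left with second-moment estimates. To pass to $V_\theta$, I would repeat the Cauchy--Schwarz-in-time computation from the proof of Theorem \ref{Thm-exist-v} applied to each difference kernel: for $n\ge1$ one obtains $\|\widetilde\kappa_n\|_{\cP_0^{\otimes n}}^2\le (t-r)^n\|\widetilde{\kappa_n^{\mathrm{st}}}\|_{\cH_0^{\otimes n}}^2$, where $\kappa_n^{\mathrm{st}}$ is the space--time kernel of the matching increment of $V_\theta$, while the $n=0$ terms coincide. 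Summing and using $t-r\le t$ together with monotonicity of the second moment in the noise strength yields the pointwise bound $\|\Delta v_{(p-1)\theta}(t,x)\|_2^2\le\|\Delta V_{(p-1)\theta t}(t,x)\|_2^2$. Integrating this against $|h|^{2H-2}\,dx\,dh$ (resp.\ $|h|^{2H-2}|k|^{2H-2}\,dx\,dh\,dk$), noting that the compact support of $G_{t-r}$ confines the $x$-integral to a bounded set, and invoking the matching estimates for the increments of $V_\theta$ from Appendix \ref{section-appA}, completes the proof.

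The genuine difficulty is not in this lemma but in the $V_\theta$-estimates it relies upon. The weight $|h|^{2H-2}$ has exponent in $(-3/2,-1)$ and is thus non-integrable near the origin, so the bounds can hold only because the $L^2$-norm of each increment decays like $|h|^{H}$ as $h\to0$ --- precisely the borderline rate that is integrable against $|h|^{2H-2}$ exactly when $H>1/4$. Producing this decay is the purpose of the analysis of the Volterra equations satisfied by the increments of $V_\theta$ carried out in Appendix \ref{section-appA}; by comparison, the translation-invariance, hypercontractivity, and kernel-comparison steps above are routine.
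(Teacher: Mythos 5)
Your overall strategy (translation invariance to collapse a)/b), c)/d), e)/f) into three statements; L\^e-type hypercontractivity to reduce to $p=2$ with $\theta\mapsto(p-1)\theta$; Cauchy--Schwarz in time on the chaos kernels to dominate the increments of $v_{(p-1)\theta}$ by those of $V_{(p-1)\theta t}$) is exactly the paper's. However, there is a genuine gap at the final step for parts c) and d). You conclude by ``invoking the matching estimates for the increments of $V_\theta$ from Appendix \ref{section-appA}'', but that appendix only supplies bounds for the increment of $V_\theta$ in the \emph{initial-point} variable $z$ (relation \eqref{V-int1}) and for the rectangular increment (relation \eqref{V-int2}). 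It supplies no estimate for the increment in the \emph{evaluation-point} variable $x$. This is not an omission you can wave away: the Volterra machinery of Theorem \ref{theoremF} works for $V_\theta^{(r,z)}-V_\theta^{(r,z+h)}$ precisely because that difference satisfies the same Volterra equation with a differenced free term, whereas $V_\theta^{(r,z)}(t,\cdot)-V_\theta^{(r,z)}(t,\cdot+h)$ satisfies no equation of that form, so no ``matching estimate'' exists to invoke. The paper flags this explicitly and resolves it with an extra idea your proposal lacks: the kernel identity $\|\widetilde g_n(\cdot,r,z,t,x)-\widetilde g_n(\cdot,r,z,t,x+h)\|_{\cH^{\otimes n}}=\|\widetilde g_n(\cdot,0,x,t-r,z)-\widetilde g_n(\cdot,0,x+h,t-r,z)\|_{\cH^{\otimes n}}$, which exchanges the roles of $(r,z)$ and $(t,x)$ and converts the $x$-increment of $V_{\theta t}$ into a $z$-increment of $V_{\theta t}$ over the interval $[0,t-r]$, after which \eqref{V-int1} (plus the translation invariance of $V_\theta$) applies.

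Two smaller points. First, your remark that ``the compact support of $G_{t-r}$ confines the $x$-integral to a bounded set'' is not needed and not how the integrals are controlled: the bounds \eqref{V-int1}--\eqref{V-int2} are already integrals over all of $\bR$ in $x$, and finiteness comes from the $|h|^{H}$-type decay you correctly identify, not from compact support. Second, for part e) the paper first observes that $v_\theta^{(r,z)}-v_\theta^{(r,z+h)}$ itself solves a Volterra equation (so hypercontractivity applies to the rectangular increment as a whole) before comparing with the rectangular increment of $V_{\theta t}$ and using \eqref{V-int2}; your proposal reaches the same comparison, so that part is fine modulo the level of detail.
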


\begin{proof} Due to Remark \ref{trans-rem} and \eqref{eq-transition-vv}, we only have to prove a), c) and e).

a) We use the following fact: for any $x,z \in \bR$,
\begin{align}
\label{ineq-compare-norm}
\left\| v_{\theta}^{(r,z)}(t,x) - v_{\theta}^{(r,z+h)}(t,x) \right\|_p^2
		\le \left\| V_{(p-1)\theta t}^{(r,z)}(t,x) - V_{(p-1)\theta t}^{(r,z+h)}(t,x) \right\|_2^2.
\end{align}
This can be shown as in the proof of Theorem \ref{Thm-exist-v}: we first prove it for $p=2$ (replacing $g_n(\cdot,z,x;r,t)$ by $g_n(\cdot,z,x;r,t) -g_n(\cdot,z+h,x;r,t)$), and then we use the fact that $\|v_{\theta}^{(r,z)}(t,x) - v_{\theta}^{(r,z+h)}(t,x)\|_p \leq \|v_{(p-1)\theta}^{(r,z)}(t,x) - v_{(p-1)\theta}^{(r,z+h)}(t,x)\|_2$, due to L\^e's hypercontractivity principle (Theorem B.1 of \cite{BCC}), noting that $v_{\theta}^{(r,z)}(t,x) - v_{\theta}^{(r,z+h)}(t,x)$ satisfies an equation of the form given in Appendix B.1 of \cite{BCC}.

Then, the inequality in part a) follows from relation \eqref{V-int1} (Example \ref{TheoremF-ex}), with the constant
$C_{v,p,H,t,\theta}=C_{V,2,H,t,(p-1)\theta t}$, where $C_{V,p,H,t,\theta}$ is given by \eqref{TheoremF-const}.

c) For any $x,z \in \bR$,
\begin{align*}
\left\| v_{\theta}^{(r,z)}(t,x) - v_{\theta}^{(r,z)}(t,x+h) \right\|_p^2 & \leq
\left\| v_{(p-1)\theta}^{(r,z)}(t,x) - v_{(p-1)\theta}^{(r,z)}(t,x+h) \right\|_2^2 \\
& \leq \left\| V_{(p-1)\theta t}^{(r,z)}(t,x) - V_{(p-1)\theta t}^{(r,z)}(t,x+h) \right\|_2^2,
\end{align*}
where the first inequality is due to Lemma \ref{Lemma-hypercontractivity} and the second inequality is proved similarly to \eqref{ineq-compare-norm}.
Therefore, it suffices to show that the integral
\[
\int_{\bR^2} \left\| V_{\theta t}^{(r,z)}(t,x) - V_{\theta t}^{(r,z)}(t,x+h) \right\|_2^2 |h|^{2H-2}dx dh
\]
is uniformly bounded, for all $r \in [0,t]$ and $z \in \bR$.
To treat this integral, we cannot use the method of Example \ref{TheoremF-ex}, since $V_{\theta}^{(r,z)}(t,x) - V_{\theta}^{(r,z)}(t,x+h)$ does not satisfy an integral equation similar to the one given in Theorem \ref{theoremF}. So we need to proceed differently. The idea is to move the increment from the $x$ variable to the $z$ variable, and then use the bound given by \eqref{V-int1} for the resulting integral.
From the chaos expansion, we have:
\begin{align*}
\left\|V_{\theta t}^{(r,z)}(t,x) - V_{\theta t}^{(r,z)}(t,x+h)\right\|_2^2&=\big|G_{t-r}(x-z)-G_{t-r}(x+h-z) \big|^2+\\
& \quad \sum_{n\geq 1}(\theta t)^n n! \, \|
\widetilde{g}_n(\cdot,r,z,t,x)- \widetilde{g}_n(\cdot,r,z,t,x+h)\|_{\cH^{\otimes n}}^2.
\end{align*}
By direct calculation, it can be shown that
\[
\|
\widetilde{g}_n(\cdot,r,z,t,x)- \widetilde{g}_n(\cdot,r,z,t,x+h)\|_{\cH^{\otimes n}}^2=
\|
\widetilde{g}_n(\cdot,0,x,t-r,z)- \widetilde{g}_n(\cdot,0,x  +h,t-r,z)\|_{\cH^{\otimes n}}^2,
\]
and hence
\[
\left\|V_{\theta t}^{(r,z)}(t,x) - V_{\theta t}^{(r,z)}(t,x+h)\right\|_2^2=
\left\|V_{\theta t}^{(0,x)}(t-r,z) - V_{\theta t}^{(0,x+h)}(t-r,z)\right\|_2^2.
\]
We conclude that for any $z,x\in \bR$,
\begin{align*}
& \int_{\bR^2} \left\| V_{\theta t}^{(r,z)}(t,x') - V_{\theta t}^{(r,z)}(t,x'+h) \right\|_2^2 |h|^{2H-2}dx' dh\\
& \quad =\int_{\bR^2} \left\| V_{\theta t}^{(0,x')}(t-r,z) - V_{\theta t}^{(0,x'+h)}(t-r,z) \right\|_2^2 |h|^{2H-2}dx' dh \\
& \quad  =\int_{\bR^2} \left\| V_{\theta t}^{(0,x)}(t-r,z') - V_{\theta t}^{(0,x+h)}(t-r,z') \right\|_2^2 |h|^{2H-2}dz' dh,
\end{align*}
where the last equality is proved exactly as  \eqref{xz-integrals}, using some translation invariance properties of $V_{\theta}$ (similar to those given in Lemma \ref{translation-lemma} for $v_{\theta}$). Finally, due to relation \eqref{V-int1}, the last integral above is bounded by the constant $C_{V,2,H,t, \theta t}$.

e) For fixed $r>0$, $z\in \bR$ and $h\in\bR$, the process $U_{\theta}^{(r,z,h)}(t,x):=v_{\theta}^{(r,z)}(t,x)-
v_{\theta}^{(r,z+h)}(t,x)$ satisfies the following equation: for any $t \geq r$ and $x \in \bR$,
\[
U_{\theta}^{(r,z,h)}(t,x)=G_{t-r}(x-z)-G_{t-r}(x-z-h)+\int_r^t \int_{\bR} G_{t-s}(x-y)U_{\theta}^{(r,z,h)}(s,y)W(\delta y)ds.
\]
Therefore, by applying Lemma \ref{Lemma-hypercontractivity}, we infer that
\[
\|U_{\theta}^{(r,z,h)}(t,x)-U_{\theta}^{(r,z,h)}(t,x+k)\|_p \leq
\|U_{(p-1)\theta}^{(r,z,h)}(t,x)-U_{(p-1)\theta}^{(r,z,h)}(t,x+k)\|_2,
\]
which reduces the problem to the case $p=2$.
Next, we use the fact that for any $x,z \in \bR$,
\begin{align*}
& \left\| v_{\theta}^{(r,z)}(t,x) - v_{\theta}^{(r,z+h)}(t,x) - v_{\theta}^{(r,z)}(t,x+k) + v_{\theta}^{(r,z+h)}(t,x+k) \right\|_2^2 \leq \\
& \quad \quad \quad  \left\| V_{\theta t}^{(r,z)}(t,x) - V_{\theta t}^{(r,z+h)}(t,x) - V_{\theta t}^{(r,z)}(t,x+k) + V_{\theta t}^{(r,z+h)}(t,x+k) \right\|_2^2,
\end{align*}
which is proved similarly to \eqref{ineq-compare-norm}.
The conclusion follows from relation \eqref{V-int2}.
\end{proof}

The next result is derived using Lemma \ref{v-lem1}.

\begin{lemma}
	\label{Lem-int-v-v'}
	For any $\theta>0$, $p\geq 2$ and $t>0$,
	\begin{align*}
a) &	\quad	\sup_{r \in [0,t]} \int_{\bR} \sup_{z \in \bR} \left(  \int_{\bR}\left\| v_{\theta}^{(r,z)}(t,x) - v_{\theta}^{(r,z+h)}(t,x) \right\|_p dx \right)^2 |h|^{2H-2}  dh \leq C_{v,p,H,t,\theta}', \\
b) & \quad \sup_{r \in [0,t]}  \int_{\bR} \sup_{x \in \bR} \left( \int_{\bR}\left\| v_{\theta}^{(r,z)}(t,x) - v_{\theta}^{(r,z+h)}(t,x) \right\|_p dz \right)^2 |h|^{2H-2}  dh \leq C_{v,p,H,t,\theta}', \\
c) & \quad \sup_{r \in [0,t]}  \int_{\bR} \sup_{z \in \bR} \left( \int_{\bR}\left\| v_{\theta}^{(r,z)}(t,x) - v_{\theta}^{(r,z)}(t,x+h) \right\|_p dx \right)^2 |h|^{2H-2}  dh \leq C_{v,p,H,t,\theta}', \\
d) & \quad \sup_{r \in [0,t]} \int_{\bR} \sup_{x \in \bR}  \left( \int_{\bR}\left\| v_{\theta}^{(r,z)}(t,x) - v_{\theta}^{(r,z)}(t,x+h) \right\|_p dz \right)^2 |h|^{2H-2}  dh \leq C_{v,p,H,t,\theta}',\\
e)	& \quad \sup_{r \in [0,t]}   \int_{\bR^2} \sup_{z \in \bR} \left( \int_{\bR}\left\| v_{\theta}^{(r,z)}(t,x) - v_{\theta}^{(r,z+h)}(t,x) - v_{\theta}^{(r,z)}(t,x+k) + v_{\theta}^{(r,z+h)}(t,x+k) \right\|_p dx \right)^2 \\
	& \qquad\qquad\qquad \times |h|^{2H-2} |k|^{2H-2} dhdk \leq C_{v,p,H,t,\theta}'\\
f)	& \quad \sup_{r \in [0,t]}   \int_{\bR^2} \sup_{x \in \bR}\left( \int_{\bR}\left\| v_{\theta}^{(r,z)}(t,x) - v_{\theta}^{(r,z+h)}(t,x) - v_{\theta}^{(r,z)}(t,x+k) + v_{\theta}^{(r,z+h)}(t,x+k) \right\|_p dz \right)^2 \\
	& \qquad\qquad\qquad \times |h|^{2H-2} |k|^{2H-2} dhdk \leq C_{v,p,H,t,\theta}',
	\end{align*}
where $C_{v,p,H,t,\theta}'>0$ is a constant that depends on $(p,H,t,\theta)$ and is increasing in $t$.
\end{lemma}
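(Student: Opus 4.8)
The plan is to reduce each of the six estimates to the corresponding one in Lemma \ref{v-lem1} by combining two ingredients: the translation-invariance recorded in Remark \ref{trans-rem}, which will make the inner supremum disappear, and a Cauchy--Schwarz argument exploiting the compact support of $v_{\theta}^{(r,z)}(t,\cdot)$ to pass from the first power of the $L^p(\Omega)$-norm to its square. Because of Remark \ref{trans-rem} and the rectangular identity \eqref{eq-transition-vv}, it suffices to treat a), c) and e); the remaining cases follow by the same computation with the roles of the $x$- and $z$-variables exchanged.

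I would begin with a) and observe, via \eqref{xz-integrals} (taken with $q=1$), that the inner integral
\[
\int_{\bR}\left\| v_{\theta}^{(r,z)}(t,x) - v_{\theta}^{(r,z+h)}(t,x) \right\|_p dx = I_{r,t,h}^{(\theta,p,1)}
\]
does not depend on $z$. Hence the supremum over $z$ is vacuous and can simply be dropped. This step is the whole point of invoking translation invariance: in general one only has $\int \sup_z \ge \sup_z \int$, so keeping $\sup_z$ inside the $dh$-integral would otherwise make the estimate strictly harder than Lemma \ref{v-lem1}.

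Next I would use the compact support property. Identity \eqref{eq-identity-v} gives $v_{\theta}^{(r,z)}(t,x)=0$ whenever $|x-z|\ge t-r$, so for fixed $h$ the map $x \mapsto v_{\theta}^{(r,0)}(t,x)-v_{\theta}^{(r,h)}(t,x)$ is supported in $\{|x|<t-r\}\cup\{|x-h|<t-r\}$, a set of Lebesgue measure at most $4(t-r)\le 4t$. Cauchy--Schwarz then yields
\[
\left( I_{r,t,h}^{(\theta,p,1)} \right)^2 = \left( \int_{\bR}\left\| v_{\theta}^{(r,0)}(t,x) - v_{\theta}^{(r,h)}(t,x) \right\|_p dx \right)^2 \le 4t\, I_{r,t,h}^{(\theta,p,2)}.
\]
Multiplying by $|h|^{2H-2}$, integrating in $h$, and using Remark \ref{trans-rem} once more to place the base point at an arbitrary $z$, the right-hand side becomes $4t\int_{\bR^2}\| v_{\theta}^{(r,z)}(t,x)-v_{\theta}^{(r,z+h)}(t,x)\|_p^2 |h|^{2H-2}dx\,dh$, which is bounded by $4t\,C_{v,p,H,t,\theta}$ uniformly in $r\in[0,t]$ by Lemma \ref{v-lem1}a. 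This gives a), and c) follows identically from Lemma \ref{v-lem1}c, using instead the integrals $J_{r,t,h}^{(\theta,p,q)}$ of Remark \ref{trans-rem}.

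For e) I would run the same scheme on the rectangular difference. Remark \ref{trans-rem} states that its $\int_{\bR}\|\,\cdot\,\|_p\,dx$ is again independent of $z$, so the inner $\sup_z$ drops out; the support in $x$ is now contained in a union of four intervals of length $2(t-r)$, hence has measure at most $8(t-r)\le 8t$, and Cauchy--Schwarz replaces the first power by the square at the cost of the factor $8t$. Integrating against $|h|^{2H-2}|k|^{2H-2}\,dh\,dk$ and applying Lemma \ref{v-lem1}e produces the bound, with $C_{v,p,H,t,\theta}'$ a fixed multiple of $t\,C_{v,p,H,t,\theta}$, hence increasing in $t$ as required. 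The only genuinely delicate point is the interplay of the two reductions: translation invariance is what legitimizes discarding the inner supremum, and the finite-propagation (compact support) property is what lets Cauchy--Schwarz convert the first-power integrals into the squared integrals already controlled by Lemma \ref{v-lem1}; everything else is routine.
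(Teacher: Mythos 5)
Your proof is correct, and the key step is carried out by a genuinely different (and in fact lighter) mechanism than the paper's. Both arguments reduce to a), c), e) via Remark \ref{trans-rem}, use translation invariance to discard the inner supremum, and ultimately land on Lemma \ref{v-lem1}; the difference is how the first power of the norm is upgraded to the square. The paper inserts the factor $2G_{t-r}(x-z)$ through the identity \eqref{eq-identity-v}, splits the difference by the triangle inequality into terms carrying the increment either on $v_\theta$ or on $G$, applies Cauchy--Schwarz with the weight $G_{t-r}^2$ to the former, and estimates the latter with the explicit $G$-increment integrals \eqref{J1-G}--\eqref{J3-G}; this forces a further split of the $dh$-integral into $|h|\le 1$ and $|h|>1$, and for e) a four-term algebraic decomposition. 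You instead observe that the whole integrand is supported in a union of at most two (resp.\ four) intervals of total measure at most $4(t-r)$ (resp.\ $8(t-r)$), so a single Cauchy--Schwarz on that set gives $\bigl(\int_{\bR}\|\cdot\|_p\,dx\bigr)^2 \le 8t\int_{\bR}\|\cdot\|_p^2\,dx$ directly, after which Lemma \ref{v-lem1} (whose $dh$-integral already runs over all of $\bR$) finishes the proof with a constant that is a fixed multiple of $t\,C_{v,p,H,t,\theta}$, hence increasing in $t$. Your route avoids the $|h|$-splitting, the mixed $G$/$v$ decompositions, and the auxiliary $G$-increment computations entirely; what the paper's version buys in exchange is essentially nothing here, since the compact-support observation you use is exactly the content of \eqref{eq-identity-v} that the paper exploits in a more roundabout way.
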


\begin{proof}
By Remark \ref{trans-rem}, it suffices to prove a), c) and e).

a) We denote by $I$ the integral appearing in the this inequality. Then $I=I_1+I_2$, where $I_1$ and $I_2$ are the integrals on $|h|>1$, respectively $|h|\leq 1$. By triangle inequality and Lemma \ref{sup-v-lem}, we have
	\[
	\int_{\bR} \left\| v_{\theta}^{(r,z)}(t,x) - v_{\theta}^{(r,z+h)}(t,x) \right\|_p dx \leq \int_{\bR}\left\| v_{\theta}^{(r,z)}(t,x)\right\|_p dx+\int_{\bR} \left\| v_{\theta}^{(r,z+h)}(t,x) \right\|_p dx \leq 4t C_{\theta,p,t,v},
	\]
	and hence $I_1 \leq (4t^2 C_{\theta,p,t,v})^2 \int_{|h|>1}|h|^{2H-2}dh<\infty$.
	
	It remains to treat $I_2$. For this, we insert artificially the $G$ function, using identity \eqref{eq-identity-v}.
Then, by triangle inequality,
\begin{align*}
\left\| v_{\theta}^{(r,z)}(t,x) - v_{\theta}^{(r,z+h)}(t,x) \right\|_p
& \leq 2G_{t-r}(x-z) \left\| v_{\theta}^{(r,z)}(t,x) - v_{\theta}^{(r,z+h)}(t,x) \right\|_p  \\
& + 2\big|G_{t-r}(x-z) - G_{t-r}(x-z-h)\big| \left\| v_{\theta}^{(r,z+h)}(t,x) \right\|_p.
\end{align*}

Hence $I_2 \leq  8 \big(I_{2,1}+I_{2,2}\big)$, where
\begin{align*}
I_{2,1} & = \int_{|h|\leq 1}\left(  \int_{\bR}
G_{t-r}(x-z) \left\| v_{\theta}^{(r,z)}(t,x) - v_{\theta}^{(r,z+h)}(t,x) \right\|_p dx   \right)^2 |h|^{2H-2} dh\\
I_{2,2}&=  \int_{|h|\leq 1}\left(  \int_{\bR}
\big|G_{t-r}(x-z) - G_{t-r}(x-z-h)\big| \left\| v_{\theta}^{(r,z+h)}(t,x) \right\|_p dx   \right)^2 |h|^{2H-2} dh.
\end{align*}
	
We treat $I_{2,1}$. The the presence of $G$ allows us to apply the Cauchy-Schwarz inequality to bring the square inside the integral. Using the fact that $\int_{\bR}G_{t-r}^2(x-z)dx=(t-r)/2$, we obtain:
\begin{align*}
		& \left(  \int_{\bR}
		G_{t-r}(x-z) \left\| v_{\theta}^{(r,z)}(t,x) - v_{\theta}^{(r,z+h)}(t,x) \right\|_p dx   \right)^2 \leq t \int_{\bR} \left\| v_{\theta}^{(r,z)}(t,x) - v_{\theta}^{(r,z+h)}(t,x) \right\|_p^2 dx.
\end{align*}
The desired bound for $I_{2,1}$ follows now from Lemma \ref{v-lem1}.a).
	
For $I_{2,2}$, we use Theorem \ref{Thm-exist-v}, and $\int_{\bR}  \big|G_{t-r}(x-z) - G_{t-r}(x-z-h)\big|  dx \leq |h|$. Hence,
\begin{align*}
		I_{2,2}	& \leq  C_{\theta,p,t,v}^2 \int_{|h|\leq 1} \left( \int_{\bR} \big|G_{t-r}(x-z) - G_{t-r}(x-z-h)\big|  dx  \right)^2  |h|^{2H-2}dh \\
		& \leq C_{\theta,p,t,v}^2 \int_{|h|\leq 1} |h|^{2H} dh<\infty.
\end{align*}

c) We use the same argument as in part a), using Lemma \ref{v-lem1}.c).

e) We denote by $I'$ the integral appearing in this inequality. Inserting artificially the term $G_{t-r}(x-z)$ (using identity \eqref{eq-identity-v}), we obtain:
\begin{align*}
& v_{\theta}^{(r,z)}(t,x) - v_{\theta}^{(r,z+h)}(t,x) - v_{\theta}^{(r,z)}(t,x+k) + v_{\theta}^{(r,z+h)}(t,x+k)= \\
& \quad \quad \quad 2G_{t-r}(x-z) v_{\theta}^{(r,z)}(t,x) - 2G_{t-r}(x-z-h) v_{\theta}^{(r,z+h)}(t,x)- \\
& \quad \quad \quad  2G_{t-r}(x+k-z) v_{\theta}^{(r,z)}(t,x+k) + 2G_{t-r}(x+k-z-h) v_{\theta}^{(r,z+h)}(t,x+k).
\end{align*}
We now use the following identity:
\begin{align*}
& a_1b_1 - a_2b_2 - a_3b_3 + a_4b_4 \\
& \quad =a_1(b_1-b_2) + (a_1-a_2)b_2 - a_3(b_3-b_4) - (a_3-a_4)b_4 \\
& \quad =a_1(b_1-b_2-b_3+b_4) + (a_1-a_3)(b_3-b_4) + (a_1-a_2-a_3+a_4)b_2 + (a_3-a_4)(b_2-b_4).
\end{align*}
Using this identity together with triangle inequality, we have
\begin{align*}
& \left\| v_{\theta}^{(r,z)}(t,x) - v_{\theta}^{(r,z+h)}(t,x) - v_{\theta}^{(r,z)}(t,x+k) + v_{\theta}^{(r,z+h)}(t,x+k)\right\|_p  \\
& \quad \leq 2G_{t-r}(x-z) \left\| v_{\theta}^{(r,z)}(t,x) - v_{\theta}^{(r,z+h)}(t,x) - v_{\theta}^{(r,z)}(t,x+k) + v_{\theta}^{(r,z+h)}(t,x+k) \right\|_p \\
& \quad + |2G_{t-r}(x-z) - 2G_{t-r}(x+k-z)| \left\| v_{\theta}^{(r,z)}(t,x+k) - v_{\theta}^{(r,z+h)}(t,x+k) \right\|_p \\
& \quad + |2G_{t-r}(x-z) - 2G_{t-r}(x-z-h) - 2G_{t-r}(x+k-z) + 2G_{t-r}(x+k-z-h)| \left\| v_\theta^{(r,z+h)}(t,x) \right\|_p \\
& \quad + |2G_{t-r}(x+k-z) - 2G_{t-r}(x+k-z-h)| \left\| v_\theta^{(r,z+h)}(t,x) - v_\theta^{(r,z+h)}(t,x+k) \right\|_p \\
& := \sum_{j=1}^4 F_j(x,h,k). 
\end{align*}
Using the inequality $(\sum_{j=1}^4 e_j)^2 \le 4 \sum_{j=1}^4 e_j^2$, we have
\begin{align*}
I' \leq 4 \sum_{j=1}^4 \int_{\bR^2} \left( \int_{\bR} F_j(x,h,k) dx \right)^2 |h|^{2H-2} |k|^{2H-2} dhdk =:4 \sum_{j=1}^4 I_j'.
\end{align*}

Since $F_j$ is a product of two terms (which involve $G$, respectively $v_{\theta}$), we bound $\left( \int_{\bR} F_j(x,h,k) dx \right)^2$ using the Cauchy-Schwarz inequality. For $I_1'$, we use Lemma \ref{v-lem1}.e) and $\int_{\bR} G_{t-r}^2(x-r)dx=(t-r)/2$.
For $I_2'$, we use Lemma \ref{v-lem1}.a), and
\begin{equation}
\label{J1-G}
\int_{\bR^2}|G_{t-r}(x-z)-G_{t-r}(x+k-z)|^2 |k|^{2H-2} dxdk=c_H (t-r)^{2H},
\end{equation}
where $c_H>0$ is a constant depending on $H$. For $I_3'$, we use Lemma \ref{sup-v-lem} and
\begin{align}
\nonumber
& \int_{\bR^3} |G_{t-r}(x-z) - G_{t-r}(x-z-h) - G_{t-r}(x+k-z) + G_{t-r}(x+k-z-h)|^2 \\
\label{J3-G}
& \quad \quad \quad |h|^{2H-2}|k|^{2H-2} dhdk dx= c_H'(t-r)^{4H-1},
\end{align}
where $c_H'>0$ is a constant that depends on $H$.
For $I_4'$, we use Lemma \ref{v-lem1}.c) and \eqref{J1-G}.
\end{proof}

\begin{remark}
{\rm
As stated in Remark \ref{trans-rem}, the integral $dx$ which appears in part a) of Lemma \ref{Lem-int-v-v'} does not depend on $z$, so instead of taking the supremum over $z \in \bR$, we could have evaluated this integral at one particular point $z_0$, for instance $z_0=0$. We chose to present the result in this way, since this will be the form that we will use in Section \ref{sec:QCLT} below, for the proof of the QCLT. The same comment applies to parts b)-f).
}
\end{remark}

\section{Proofs of the main results}
\label{section-proofs}

In this section, we include the proofs of Theorems \ref{limit-cov}, \ref{QCLT} and \ref{FCLT}.

\subsection{Limiting covariance}

In this section, we give the proof of Theorem \ref{limit-cov}.
Let
\begin{equation}
\label{def-rho}
	\rho_{t,s,\theta}(x-y)
	:=\bE \left[ \big(u_{\theta}(t,x)-1\big) \big(u_{\theta}(s,y)-1\big) \right]
	= \sum_{n\geq 1}\frac{1}{n!}\theta^n \alpha_n(x-y;t,s),
\end{equation}
where
\begin{align}
\nonumber
	& \alpha_n(x-y;t,s)
	=(n!)^2 \langle \widetilde f_n(\cdot,x;t), \widetilde f_n(\cdot,y;s) \rangle_{\cP_0^{\otimes n}}
	=(n!)^2 \langle f_n(\cdot,x;t), \tilde f_n(\cdot,y;s) \rangle_{\cP_0^{\otimes n}} \nonumber \\
	=& n! \sum_{\sigma \in S_n} c_H^n \int_{\bR^n}
	\cF f_n(\cdot,x;t)(\xi_1,\ldots,\xi_n) \overline{\cF f_n(\cdot,y;s)(\xi_{\sigma(1)},\ldots,\xi_{\sigma(n)})}
	\prod_{j=1}^{n}|\xi_j|^{1-2H} d\pmb{\xi} \nonumber \\
	=& n! \sum_{\sigma \in S_n} c_H^n \int_{\bR^n} e^{-i \sum_{j=1}^n \xi_j (x-y)} \prod_{j=1}^n |\xi_j|^{1-2H} d\xi_1 \ldots d\xi_n
	\int_{T_n(t)} d\pmb{t} \int_{T_n(s)} d\pmb{s} \nonumber \\
\label{eq-inner}
	& \times \prod_{j=1}^n \cF G_{t_{j+1}-t_j} (\xi_1+\ldots+\xi_j) \prod_{j=1}^n \cF G_{s_{j+1}-s_j} (\xi_{\sigma(1)}+\ldots+\xi_{\sigma(j)}).
\end{align}
Here we use the convention $t_{n+1}=t$ and $s_{n+1}=s$. This shows that $\alpha_n(x-y;t,s)$ and $\rho_{t,s}(x-y)$ depend on $x$ and $y$ only through the difference $x-y$. In particular, $\{u(t,x);x \in \bR\}$ is a (wide-sense) stationary process with covariance function $\rho_{t,s,\theta}$.

By Fubini theorem,
\begin{align}
\nonumber
\frac{1}{R}\bE[F_{R,\theta}(t) F_{R,\theta}(s)]&=\frac{1}{R}\int_{-R}^R \int_{-R}^R \rho_{t,s,\theta}(x-y)dxdy=
\sum_{n\geq 1}\theta^n \frac{1}{Rn!} \int_{-R}^R \int_{-R}^R
\alpha_n(x-y;t,s)dxdy\\
\label{series-Q}
&=: \sum_{n\geq 1}\theta^n Q_{n,R}(t,s),
\end{align}
The application of Fubini theorem is justified by \eqref{bound-f}, using that fact that:
\[
\alpha_n(x-y;t,s) \leq (n!)^2
\| f_n(\cdot,x;t)\|_{\cP_0^{\otimes n}}\|\tilde f_n(\cdot,y;s) \|_{\cP_0^{\otimes n}} \leq c^n \frac{t^{(2H+2)n}}{(n!)^{2H}}.
\]

We now show that the first term in series \eqref{series-Q} does not contribute to the limit.

\begin{lemma}
For any $t,s>0$,
\[
Q_{1,R}(t,s)=\frac{1}{R}\int_{-R}^{R} \int_{-R}^{R}
\alpha_1(x-y;t,s)dxdy \to 0 \quad \mbox{as $R \to \infty$}.
\]
\end{lemma}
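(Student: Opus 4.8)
The plan is to compute $\alpha_1$ explicitly from its Fourier representation, integrate over the square $[-R,R]^2$ to produce a Fej\'er-type kernel, and then track the precise power of $R$ that emerges. The key qualitative point is that the first-chaos integral grows like $R^{2H}$, which after the $1/R$ normalization decays because $H<1/2$; this is exactly what distinguishes the first chaos (negligible) from the higher chaoses (which survive the normalization).

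First I would specialize formula \eqref{eq-inner} to $n=1$. Since $S_1$ is trivial, $T_1(t)=\{0<t_1<t\}$, and $\cF G_{t-t_1}(\xi)=\sin((t-t_1)|\xi|)/|\xi|$, a direct computation of the time integral gives
\[
\int_0^t \cF G_{t-r}(\xi)\,dr = \frac{1-\cos(t|\xi|)}{\xi^2}=:\phi_t(\xi),
\]
so that
\[
\alpha_1(x-y;t,s)=c_H\int_{\bR}e^{-i\xi(x-y)}|\xi|^{1-2H}\phi_t(\xi)\phi_s(\xi)\,d\xi.
\]
Next I would integrate over $x,y\in[-R,R]$. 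Using $\int_{-R}^R e^{-i\xi x}\,dx=2\sin(R\xi)/\xi$, one has $\int_{-R}^R\int_{-R}^R e^{-i\xi(x-y)}\,dx\,dy=4\sin^2(R\xi)/\xi^2$, and Fubini's theorem (justified since for fixed $R$ the integrand is absolutely integrable, because $\phi_t(\xi)\le\min\{t^2/2,\,2\xi^{-2}\}$ and $1-2H>0$) yields
\[
Q_{1,R}(t,s)=\frac{4c_H}{R}\int_{\bR}|\xi|^{1-2H}\phi_t(\xi)\phi_s(\xi)\frac{\sin^2(R\xi)}{\xi^2}\,d\xi.
\]

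The main task is then to show that the integral is $O(R^{2H})$. The delicate point is that the factor $|\xi|^{1-2H}\phi_t\phi_s/\xi^2$ behaves like $|\xi|^{-1-2H}$ near the origin and is therefore \emph{not} integrable there; it is precisely the vanishing of the Fej\'er factor $\sin^2(R\xi)$ (of order $R^2\xi^2$) at the origin that restores integrability and fixes the growth rate. I would split $\bR$ into the regions $\{|\xi|\le 1/R\}$, $\{1/R<|\xi|\le 1\}$ and $\{|\xi|>1\}$, and use $\sin^2(R\xi)\le\min\{R^2\xi^2,\,1\}$ together with $\phi_t(\xi)\phi_s(\xi)\le\min\{t^2s^2/4,\,4\xi^{-4}\}$. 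On the first region, $\sin^2(R\xi)\le R^2\xi^2$ and $\phi_t\phi_s\le t^2s^2/4$ give $R^2\int_{|\xi|\le 1/R}|\xi|^{1-2H}\,d\xi=O(R^{2H})$; on the second, $\sin^2(R\xi)\le 1$ with $\phi_t\phi_s$ bounded gives $\int_{1/R}^1\xi^{-1-2H}\,d\xi=O(R^{2H})$; on the third, the decay $\phi_t\phi_s\le 4\xi^{-4}$ makes the integral a finite constant, hence $O(R^{2H})$.

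Combining the three bounds, I would conclude that $Q_{1,R}(t,s)\le C_{t,s,H}\,R^{2H-1}$, which tends to $0$ as $R\to\infty$ because $H<1/2$. I expect the bookkeeping over the three regions to be routine; the conceptual crux is the mismatch between the $R^{2H}$ growth of the first-chaos integral and the $R$ normalization, which is traceable directly to the spatial roughness exponent $H<1/2$.
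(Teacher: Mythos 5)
Your proof is correct, and it follows the same overall strategy as the paper (Fourier representation of $\alpha_1$, the Fej\'er factor $4\sin^2(R\xi)/\xi^2$ from the double spatial integral, and a splitting of the frequency domain), but the execution differs in two useful ways. First, you integrate out the time variable in closed form, obtaining $\phi_t(\xi)=(1-\cos(t|\xi|))/\xi^2$, whereas the paper keeps the $dt_1\,ds_1$ integrals and bounds the inner $\xi$-integral uniformly in $(t_1,s_1)$. Second, your splitting at $|\xi|=1/R$ and $|\xi|=1$, combined with $\sin^2(R\xi)\le\min\{R^2\xi^2,1\}$ and $\phi_t\phi_s\le\min\{t^2s^2/4,\,4\xi^{-4}\}$, yields the sharp rate $Q_{1,R}(t,s)=O(R^{2H-1})$; the paper instead splits at a fixed $\varepsilon$ and uses the interpolation $|\sin x|\le|x|^{\theta}$ with $\theta\in(H,\tfrac12)$, getting only $O(R^{2\theta-1})$ for $\theta$ arbitrarily close to but strictly larger than $H$. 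Both bounds suffice for the lemma; yours is slightly sharper and, to my eye, cleaner. Your Fubini justifications (via $\phi_t(\xi)\le\min\{t^2/2,\,2\xi^{-2}\}$ and $1-2H>0$) are adequate and play the role of the paper's inequality \eqref{Fub-just}.
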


\begin{proof} By Fubini theorem,
\begin{align*}
\alpha_1(x-y;t,s)&=c_H \int_{\bR} \left( \int_0^t e^{-i\xi x}\frac{\sin((t-t_1)|\xi|)}{|\xi|}dt_1\right)
\left( \int_0^s e^{i\xi y}\frac{\sin((s-s_1)|\xi|)}{|\xi|}ds_1\right)\\
&=c_H \int_0^t \int_0^s \int_{\bR}e^{-i \xi(x-y)}
\frac{\sin((t-t_1)|\xi|)}{|\xi|}
\frac{\sin((s-s_1)|\xi|)}{|\xi|} d\xi ds_1 dt_1.
\end{align*}
The application of this theorem is justified using the inequality:
\begin{equation}
\label{Fub-just}
\frac{|\sin((t-t_1)|\xi|)|}{|\xi|}
\frac{|\sin((s-s_1)|\xi|)|}{|\xi|} \leq (t-t_1)(s-s_1)1_{\{ |\xi|\leq 1\}}+\frac{1}{|\xi|^2}1_{\{ |\xi|> 1\}}.
\end{equation}
Another application of Fubini theorem, justified also by \eqref{Fub-just}, shows that:
\begin{align*}
& \int_{-R}^{R}
\int_{-R}^{R} \alpha_1(x-y;t,s)dxdy \\
& \quad = c_H \int_0^t \int_0^s \int_{\bR} \frac{4\sin^2(R|\xi|)}{|\xi|^2}
\frac{\sin((t-t_1)|\xi|)}{|\xi|}
\frac{\sin((s-s_1)|\xi|)}{|\xi|} |\xi|^{1-2H} d\xi ds_1 dt_1,
\end{align*}
where we used the fact that:
\[
\left(\int_{-R}^{R}
\int_{-R}^{R} e^{-i \xi(x-y)} dxdy  \right)=
\left|\int_{-R}^{R} e^{-i \xi x} dx\right|^2 =\frac{4\sin^2(R|\xi|)}{|\xi|^2} :=4 \pi R \ell_{R}(\xi),
\]

We denote
\[
I_1(t_1,s_1):=\int_{\bR} \frac{\sin^2(R|\xi|)}{|\xi|^2}
\frac{\sin((t-t_1)|\xi|)}{|\xi|}
\frac{\sin((s-s_1)|\xi|)}{|\xi|} |\xi|^{1-2H} d\xi=I_1(t_1,s_1)+I_2(t_1,s_1),
\]
where $I_1(t_1,s_1)$ and $I_2(t_1,s_1)$ correspond to the integration regions $\{|\xi|\leq \e\}$, respectively $\{|\xi|>\e\}$. To estimate $I_2(t_1,s_1)$, we bound all three $\sin$ functions by $1$, obtaining
\[
|I_2(t_1,s_1)| \leq \int_{|\xi|>\e} |\xi|^{-3-2H}d\xi=:C_{\e,H}.
\]
To bound $I_1(t_1,s_1)$, we choose $\e>0$ such that $(t+s)\e<\frac{\pi}{2}$, and hence $\sin((t-t_1)|\xi|)\geq 0$ and
$\sin((s-s_1)|\xi|)\geq 0$ for any $t_1 \in [0,t],s_1 \in[0,s]$ and $\xi$ with $|\xi|\leq \e$. Using the inequality $|\sin(x)|\leq |x|$ for the last two $\sin$ functions, followed by the inequality $|\sin x|\leq |x|^{\theta}$ with $\theta \in [0,1]$ for the last $\sin$ function, we obtain that:
\begin{align*}
I_1(t_1,s_1) &\leq (t-t_1)(s-s_1)\int_{|\xi|\leq \e}\frac{\sin^2(R|\xi|)}{|\xi|^2}|\xi|^{1-2H} d\xi\\
& \leq (t-t_1)(s-s_1)R^{2\theta} \int_{|\xi|\leq \e}|\xi|^{2\theta-2H-1} d\xi=:(t-t_1)(s-s_1)R^{2\theta} C_{\e,\theta,H},
\end{align*}
provided that $\theta>H$. Summarizing, for any $t_1 \in [0,t]$, $s_1 \in [0,s]$ and $\theta \in (H,1]$,
\[
|I(t_1,s_1)|\leq (t-t_1)(s-s_1)R^{2\theta} C_{\e,\theta,H} +C_{\e,H}.
\]
Therefore,
\[
\left|\int_{-R}^{R}
\int_{-R}^{R} \alpha_1(x-y;t,s)dxdy\right| \leq 4c_H \int_0^t \int_0^s |I(t_1,s_1)| ds_1 dt_1 \leq C (t^2 s^2 R^{2\theta}+ts).
\]
Then $|Q_{1,R}(t,s)| \leq C (t^2 s^2 R^{2\theta-1}+tsR^{-1}) \to 0$ as $R \to \infty$, if we choose $\theta \in (H,\frac{1}{2})$.
\end{proof}

Next, we study the terms corresponding to $n\geq 2$. We use the fact that the function
\[
\ell_R(\xi)=\frac{\sin^2(R|\xi|)}{\pi R |\xi|^2}, \quad \xi \in \bR
\]
is an approximation of the identity, when $R \to \infty$ (see Lemma 2.1 of \cite{NZ20}).

\begin{lemma}
For any $t>0$ and $s>0$,
\[
K_{\theta}(t,s):=\lim_{R \to \infty}\sum_{n\geq 2}\theta^n Q_{n,R}(t,s) \quad
\mbox{exists and is finite}.
\]
\end{lemma}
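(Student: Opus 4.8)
The plan is to pass to the Fourier side, recognize the Fej\'er-type kernel $\ell_R$ as an approximation of the identity, establish convergence of each term $\theta^n Q_{n,R}(t,s)$ separately, and then interchange the limit with the summation over $n$ by means of a uniform-in-$R$ summable bound.

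First I would substitute the Fourier representation \eqref{eq-inner} of $\alpha_n$ into the definition of $Q_{n,R}$ and carry out the $dx\,dy$ integration exactly as in the case $n=1$, using
\[
\int_{-R}^{R}\int_{-R}^{R} e^{-i(\xi_1+\ldots+\xi_n)(x-y)}\,dx\,dy=\frac{4\sin^2\big(R|\xi_1+\ldots+\xi_n|\big)}{|\xi_1+\ldots+\xi_n|^2}=4\pi R\,\ell_R(\xi_1+\ldots+\xi_n).
\]
Dividing by $Rn!$ cancels both the factor $R$ and the prefactor $n!$ in \eqref{eq-inner}, leaving
\[
Q_{n,R}(t,s)=4\pi\sum_{\sigma\in S_n}c_H^n\int_{\bR^n}\ell_R\Big(\textstyle\sum_{j=1}^n\xi_j\Big)\,\cF f_n(\cdot,0;t)(\pmb\xi)\,\cF f_n(\cdot,0;s)(\xi_{\sigma(1)},\ldots,\xi_{\sigma(n)})\prod_{j=1}^n|\xi_j|^{1-2H}\,d\pmb\xi,
\]
where $\cF f_n(\cdot,0;t)(\pmb\xi)=\int_{T_n(t)}\prod_{j=1}^n\cF G_{t_{j+1}-t_j}(\xi_1+\ldots+\xi_j)\,d\pmb t$ is real-valued (with the convention $t_{n+1}=t$).

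For fixed $n$ I would then let $R\to\infty$. Writing $\zeta=\sum_{j=1}^n\xi_j$ and integrating out the remaining $n-1$ frequencies turns each $\sigma$-integral into $\int_{\bR}\ell_R(\zeta)\Phi_{n,\sigma}(\zeta;t,s)\,d\zeta$. Since $\ell_R$ is an approximation of the identity (Lemma 2.1 of \cite{NZ20}) and $\Phi_{n,\sigma}$ is continuous and bounded at $\zeta=0$ (continuity of $\cF G$ together with the dominating estimate constructed below), this integral converges to $\Phi_{n,\sigma}(0;t,s)$. Thus $Q_{n,R}(t,s)\to Q_n(t,s)$, where $Q_n(t,s)$ is the same expression restricted to the hyperplane $\{\sum_j\xi_j=0\}$, on which the outermost kernels become $\cF G_{t-t_n}(0)=t-t_n$ and $\cF G_{s-s_n}(0)=s-s_n$.

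The crux, and the main obstacle, is a bound on $\sup_{R\ge1}|Q_{n,R}(t,s)|$ that remains summable after multiplication by $\theta^n$. I would apply the Cauchy--Schwarz inequality to each $\sigma$-term, with the nonnegative symmetric weight $\ell_R(\sum_j\xi_j)\prod_j|\xi_j|^{1-2H}$, to separate the $t$- and $s$-factors. Because this weight is invariant under permutations of $\pmb\xi$, the permutation $\sigma$ occurring in the $s$-factor can be undone by relabeling the integration variables, so every $\sigma$-term obeys the identical bound and the sum over $S_n$ contributes only a factor $n!$:
\[
\big|Q_{n,R}(t,s)\big|\le 4\pi c_H^n\,n!\,\big(A_n^R(t)\big)^{1/2}\big(A_n^R(s)\big)^{1/2},\qquad A_n^R(t):=\int_{\bR^n}\ell_R\Big(\textstyle\sum_j\xi_j\Big)\big|\cF f_n(\cdot,0;t)(\pmb\xi)\big|^2\prod_j|\xi_j|^{1-2H}\,d\pmb\xi.
\]
To estimate $A_n^R(t)$ uniformly in $R\ge1$ I would repeat the computation of \eqref{bound-f}: pull out $t^n/n!$ by Cauchy--Schwarz on the simplex, dominate $|\eta_1|^{1-2H}\prod_{j\ge2}|\eta_j-\eta_{j-1}|^{1-2H}$ via \eqref{prod} in the variables $\eta_j=\xi_1+\ldots+\xi_j$ (so that $\eta_n=\sum_j\xi_j$ is exactly the argument of $\ell_R$), and integrate the frequencies one at a time. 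The first $n-1$ integrations are evaluated by \eqref{G-int}, while the last one carries the extra factor $\ell_R(\eta_n)$, which is harmless since $\ell_R\ge0$, $\int_{\bR}\ell_R=1$, and
\[
\sup_{R\ge1}\int_{\bR}\ell_R(\eta)\,|\eta|^{\beta}\,d\eta<\infty\qquad\text{for every }\beta\in[0,1);
\]
the relevant exponents $\alpha_n=(1-2H)a_n$ satisfy $\alpha_n\le 2(1-2H)<1$ precisely because $H>\tfrac{1}{4}$ (which is also the condition making \eqref{G-int} finite). This yields $A_n^R(t)\le C^n t^{\gamma_n}/(n!)^{2H+2}$ uniformly in $R$, so that $\theta^n|Q_{n,R}(t,s)|\le (C\theta)^n(ts)^{\gamma}/(n!)^{2H+1}$, which is summable by \eqref{est1}. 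Dominated convergence for series then gives $\sum_{n\ge2}\theta^nQ_{n,R}(t,s)\to\sum_{n\ge2}\theta^nQ_n(t,s)=:K_\theta(t,s)$, finite. I expect the delicate bookkeeping to be verifying that spending one frequency integration on $\ell_R$ still leaves enough factorial decay to absorb the $n!$ produced by the permutation sum, which is exactly the place where the roughness condition $H>\tfrac{1}{4}$ is essential.
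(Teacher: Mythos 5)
Your proposal is correct and follows essentially the same route as the paper: pass to the Fourier side, use that $\ell_R$ is an approximation of the identity for the termwise limit, and control the permutation sum by a symmetrization bound (your Cauchy--Schwarz with the symmetric weight is interchangeable with the paper's $ab\le\tfrac12(a^2+b^2)$ device), with the last frequency integration absorbing $\ell_R$ via $\sup_{R\ge 1}R^{-\alpha_n}\le 1$ and the factorial decay $(n!)^{-(2H+1)}$ beating the $n!$ from $S_n$. The only (cosmetic) differences are that you integrate out $\pmb{t},\pmb{s}$ before applying dominated convergence over $n$, whereas the paper dominates at the level of the joint $(\pmb{t},\pmb{s},n)$ integral, and your final exponent $\gamma$ should really be $n$-dependent.
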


\begin{proof}
We use expression \eqref{eq-inner} for $\alpha_n(x-y;t,s)$, and we integrate $dxdy$ on $[-R,R]^2$. Using Fubini's theorem, we obtain the following expression for $Q_{n,R}(t,s)$:
\begin{align}
\nonumber
Q_{n,R}(t,s) &= \frac{1}{R}\frac{1}{n!}\int_{-R}^{R}
\int_{-R}^{R} \alpha_n(x-y;t,s) dxdy \\
\nonumber
&=4\pi c_H^n \sum_{\sigma \in S_n}  \int_{T_n(t)}  \int_{T_n(s)}  \int_{\bR^n} \ell_R(\xi_1+\ldots+\xi_n) \prod_{j=1}^n |\xi_j|^{1-2H}  \\
\nonumber
& \times \dfrac{\sin((t-t_n)|\xi_1+\ldots+\xi_n|)}{|\xi_1+\ldots+\xi_n|} \dfrac{\sin((s-s_n)|\xi_1+\ldots+\xi_n|)}{|\xi_1+\ldots+\xi_n|}\\
\nonumber
& \times \prod_{j=1}^{n-1} \dfrac{\sin((t_{j+1}-t_j)|\xi_1+\ldots+\xi_j|)}{|\xi_1+\ldots+\xi_j|} \prod_{j=1}^{n-1} \dfrac{\sin((s_{j+1}-s_j)|\xi_{\sigma(1)}+\ldots+\xi_{\sigma(j)}|)}{|\xi_{\sigma(1)}+\ldots+\xi_{\sigma(j)}|} d\pmb{\xi}d\pmb{s} d\pmb{t}\\
\label{eq-Q}
& :=4\pi c_H^n \sum_{\sigma \in S_n}  \int_{T_n(t)}\int_{T_n(s)}  Q_{n,R}(\pmb{t},\pmb{s},\sigma)
d\pmb{s} d\pmb{t}
\end{align}

\noindent To evaluate $Q_{n,R}(\pmb{t},\pmb{s},\sigma)$,
we use the change the variables $\xi_n \to \eta_n = \xi_1+\ldots+\xi_n$. For any fixed permutation $\sigma$, we denote $j_{0} = \sigma^{-1}(n)$, so that $\sigma(j_{0}) = n$. Then for all $j<j_0$, the sum $\xi_{\sigma(1)}+\ldots+\xi_{\sigma(j)}$ does not involve the variable $\xi_n$, and remains the same when we perform the change of variables. For $j \ge j_0$, the sum
$\xi_{\sigma(1)}+\ldots+\xi_{\sigma(j)}$ contains $\xi_n$ and will be replaced after the change of variables by
\[
\sum_{k=1}^{j_0-1}\xi_{\sigma(k)}+\big(\eta_n-\sum_{j=1}^{n-1}\xi_j\big)+
\sum_{k=j_0+1}^{j}\xi_{\sigma(k)}
= \eta_n - \xi_{\sigma(j+1)}-\ldots-\xi_{\sigma(n)},
\]
where $\sum_{\emptyset}=0$.
Using the convention $\prod_{\emptyset}=1$, we obtain the expression:
\begin{align*}
& Q_{n,R}(\pmb{t},\pmb{s},\sigma) =\int_{\bR^n} \ell_R(\eta_n)
    \dfrac{\sin((t-t_n)|\eta_n|)}{|\eta_n|} \dfrac{\sin((s-s_n)|\eta_n|)}{|\eta_n|} \\
& \quad  \times \prod_{j=1}^{n-1} \dfrac{\sin((t_{j+1}-t_j)|\xi_1+\ldots+\xi_j|)}{|\xi_1+\ldots+\xi_j|} \prod_{j<\sigma^{-1}(n)} \dfrac{\sin((s_{j+1}-s_j)|\xi_{\sigma(1)}+\ldots+\xi_{\sigma(j)}|)}{|\xi_{\sigma(1)}+\ldots+\xi_{\sigma(j)}|} \\
&  \quad  \times \prod_{j\ge \sigma^{-1}(n)} \dfrac{\sin((s_{j+1}-s_j)|\eta_n - \xi_{\sigma(j+1)} - \ldots - \xi_{\sigma(n)}|)}{|\eta_n - \xi_{\sigma(j+1)} - \ldots - \xi_{\sigma(n)}|} \\
&  \quad \times \prod_{j=1}^{n-1} |\xi_j|^{1-2H} |\eta_n - \xi_1 - \ldots - \xi_{n-1}|^{1-2H} d\xi_1\ldots d\xi_{n-1} d\eta_n  :=
\left( \ell_R * g_{{\bf t},{\bf s},\sigma}^{(n)}\right)(0),
\end{align*}
where the function $g_{{\pmb t},\pmb{s},\sigma}^{(n)}$ is given by:
\begin{align*}
g_{\pmb{t},\pmb{s},\sigma}^{(n)}(x) &:= \dfrac{\sin((t-t_n)|x|)}{|x|} \dfrac{\sin((s-s_n)|x|)}{|x|} \\
    & \times \int_{\bR^{n-1}}  \prod_{j=1}^{n-1} \dfrac{\sin((t_{j+1}-t_j)|\xi_1+\ldots+\xi_j|)}{|\xi_1+\ldots+\xi_j|} \prod_{j<\sigma^{-1}(n)} \dfrac{\sin((s_{j+1}-s_j)|\xi_{\sigma(1)}+\ldots+\xi_{\sigma(j)}|)}{|\xi_{\sigma(1)}+\ldots+\xi_{\sigma(j)}|} \\
    & \times \prod_{j\ge \sigma^{-1}(n)} \dfrac{\sin((s_{j+1}-s_j)|x - \xi_{\sigma(j+1)} - \ldots - \xi_{\sigma(n)}|)}{|x - \xi_{\sigma(j+1)} - \ldots - \xi_{\sigma(n)}|} \\
    & \times \prod_{j=1}^{n-1} |\xi_j|^{1-2H} |x - \xi_1 - \ldots - \xi_{n-1}|^{1-2H} d\xi_1\ldots d\xi_{n-1}.
\end{align*}
To define $g_{{\bf t},{\bf s},\sigma} (0)$, we use the convention $\frac{\sin x}{x}\big|_{x=0}=1$.

Since $\ell_R$ is an approximation of the identity as $R \to \infty$, by the Dominated Convergence Theorem, we obtain that
\begin{align}
\nonumber
\sum_{n\geq 2}\theta^n Q_{n,R}(t,s)&= 4\pi \sum_{n\geq 2}\theta^n c_H^n \int_{T_n(t)}  \int_{T_n(s)} \sum_{\sigma \in S_n} \left( \ell_R * g_{{\bf t},{\bf s},\sigma} \right)(0) d{\bf s}  d{\bf t}\\
\label{def-K}
   & \to 4\pi \sum_{n\geq 2}\theta^n c_H^n \int_{T_n(t)} \int_{T_n(s)} \sum_{\sigma \in S_n} g_{{\bf t},{\bf s},\sigma} (0) d{\bf s} d{\bf t}  =:K_{\theta}(t,s), \quad \mbox{as $R \to \infty$}.
\end{align}

It remains to justify the application of the Dominated Convergence Theorem. For this, we will prove that there exists a function $H_n(\pmb{t},\pmb{s})$ such that
\[
\left|\sum_{\sigma \in S_n} \left( \ell_R * g_{{\bf t},{\bf s},\sigma} \right)(0)\right| \leq H_n(\pmb{t},\pmb{s})
\]
for any $n\geq 2$, $R\geq 1$, $\pmb{t}\in T_n(t)$, $\pmb{s}\in T_n(s)$, and
\begin{equation}
\label{bound-H}
\sum_{n\geq 2}\theta^n c_H^n \int_{T_n(t)} \int_{T_n(s)}H_n(\pmb{t},\pmb{s}) d \pmb{t} d\pmb{s}<\infty.
\end{equation}
In particular, this will imply that the limit $K_{\theta}(t,s)$ is finite.

We will use the following inequality: for any functions $h_1,h_2$ on $\bR^n$ and for any symmetric measure $\mu_n$ on $\bR^n$,
\begin{align*}
& \left|\sum_{\sigma \in S_n} \int_{\bR^n}h_1(\xi_1,\ldots,\xi_n)
h_2(\xi_{\sigma(1)},\ldots,\xi_{\sigma(n)})
\mu_n(d\xi_1,\ldots,d\xi_n) \right| \leq \\
& \quad \frac{n!}{2} \left\{
\int_{\bR^n}|h_1(\xi_1,\ldots,\xi_n)|^2 \mu_n(d\xi_1,\ldots,d\xi_n)+\int_{\bR^n}|h_2(\xi_1,\ldots,\xi_n)|^2 \mu_n(d\xi_1,\ldots,d\xi_n) \right\},
\end{align*}
which can be proved applying the inequality $ab \leq \frac{1}{2}(a^2+b^2)$. It follows that
\begin{align*}
\left|\sum_{\sigma \in S_n} \left( \ell_R * g_{{\bf t},{\bf s},\sigma} \right)(0) \right|
& \leq \frac{n!}{2}\left\{\int_{\bR^n}
\ell_{R}(\xi_1+\ldots+\xi_n)
\prod_{j=1}^{n} \frac{\sin^2((t_{j+1}-t_j)|\xi_1+\ldots+\xi_j|)}{|\xi_1+\ldots+\xi_j|^2}
\prod_{j=1}^{n}|\xi_j|^{1-2H}d\pmb{\xi}\right.\\
& \quad \left. +\int_{\bR^n}
\ell_{R}(\xi_1+\ldots+\xi_n)
\prod_{j=1}^{n} \frac{\sin^2((s_{j+1}-s_j)|\xi_1+\ldots+\xi_j|)}{|\xi_1+\ldots+\xi_j|^2}
\prod_{j=1}^{n}|\xi_j|^{1-2H} d\pmb{\xi} \right\}\\
&=:\frac{n!}{2}\Big(\cI_{n,R}({\pmb t})+\cI_{n,R}({\pmb s})\Big).
\end{align*}
We only evaluate the first integral, the second one being similar. We use the change of variables $\eta_j=\xi_1+\ldots+\xi_j$ for $j=1,\ldots,n$ (with $\eta_0=0$), followed by inequality \eqref{prod}:
\begin{align*}
\cI_{n,R}({\pmb t})&\leq \sum_{\alpha \in D_n} \prod_{j=1}^{n-1}\left( \int_{\bR} \frac{\sin^2((t_{j+1}-t_j)|\eta_j|)}{|\eta_j|^2}|\eta_j|^{\alpha_j}
d\eta_j\right)
\left( \int_{\bR} \frac{\sin^2((t-t_n)|\eta_n|)}{|\eta_n|^2}
\ell_R(\eta_n) |\eta_n|^{\alpha_n}
d\eta_n\right)\\
&\leq c^{n-1}\sum_{\alpha \in D_n} \prod_{j=1}^{n-1}(t_{j+1}-t_j)^{1-\alpha_j}(t-t_n)^2 \frac{1}{\pi R}\int_{\bR}
\frac{\sin^2(R|\eta_n|)}{|\eta_n|^2} |\eta_n|^{\alpha_n}
d\eta_n\\
&=c^n \sum_{\alpha \in D_n} \prod_{j=1}^{n-1}(t_{j+1}-t_j)^{1-\alpha_j} (t-t_n)^2 R^{-\alpha_n} \leq c^n \sum_{\alpha \in D_n} t^{1+\alpha_n} \prod_{j=1}^{n}(t_{j+1}-t_j)^{1-\alpha_j}
\end{align*}
Therefore, assuming that $s\leq t$, we obtain:
\begin{align*}
\left|\sum_{\sigma \in S_n} \left( \ell_R * g_{{\bf t},{\bf s},\sigma} \right)(0) \right|
& \leq n!
 c^n \sum_{\alpha \in D_n}t^{1+\alpha_n}\Big(\prod_{j=1}^{n}(t_{j+1}-t_j)^{1-\alpha_j}
 +
 \prod_{j=1}^{n}(s_{j+1}-s_j)^{1-\alpha_j}\Big)=:H_n(\pmb{t},\pmb{s}).
\end{align*}
To see that \eqref{bound-H} holds, we note that
\begin{align*}
 \int_{T_n(t)}\int_{T_n(s)}H_n(\pmb{t},\pmb{s}) d\pmb{s}
d\pmb{t}
& \leq 2c^n  \sum_{\alpha \in D_n}t^{1+\alpha_n}\int_{T_n(t)}
 \prod_{j=1}^{n}(t_{j+1}-t_j)^{1-\alpha_j}d\pmb{t}\\
& \leq 2c^n \sum_{\alpha \in D_n}t^{1+\alpha_n} \frac{t^{(2H+1)n}}{\Gamma((2H+1)n+1)}.
\end{align*}

\end{proof}

Finally, we prove that when $s=t$, the limit $K_{\theta}(t,t)$ is non-zero. For this, we use the following lemma.

\begin{lemma}
\label{sym-lem}
If $h$ is a measurable function on $(\bR^d)^n$ and $\mu_n$ is a symmetric measure on $(\bR^d)^n$, then
\begin{align}
\label{sym-sum}
& \sum_{\sigma \in S_n} \int_{(\bR^d)^n} h(\xi_1,\ldots,\xi_n) h(\xi_{\sigma(1)},\ldots,\xi_{\sigma(n)}) \mu_n(d\xi_1,\ldots,d\xi_n) \\
\nonumber
& \quad \quad \quad =\frac{1}{n!} \int_{(\bR^d)^n} |\widetilde{h}(\xi_1,\ldots,\xi_n)|^2 \mu_n(d\eta_1,\ldots,d\eta_n).
\end{align}
\end{lemma}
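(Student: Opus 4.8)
The plan is to collapse the symmetrized double structure to a single orbit sum by exploiting the permutation-invariance of $\mu_n$, and then to recognize the result as the $\mu_n$-norm of the symmetrization $\widetilde h$. Throughout I write $h^{\sigma}(\xi_1,\ldots,\xi_n):=h(\xi_{\sigma(1)},\ldots,\xi_{\sigma(n)})$, and I read the left-hand integrand as $h\,\overline{h^{\sigma}}$, which is what matches both the modulus $|\widetilde h|^2$ on the right and the conjugated Fourier factor appearing in \eqref{eq-inner} (when $h$ is real-valued the conjugate is immaterial).

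First I would record the only analytic input, namely that symmetry of $\mu_n$ means $\int F\,d\mu_n=\int (F\circ T_\pi)\,d\mu_n$ for every $\pi\in S_n$, where $T_\pi$ is the coordinate permutation $\xi\mapsto(\xi_{\pi(1)},\ldots,\xi_{\pi(n)})$. The elementary but crucial bookkeeping point is that substitution is a left action: $h^{\sigma}\circ T_\pi=h^{\pi\sigma}$. Applying the invariance to $F=h^{\sigma}\,\overline{h^{\tau}}$ with $\pi=\sigma^{-1}$ then gives
\[
\int_{(\bR^d)^n} h^{\sigma}\,\overline{h^{\tau}}\,d\mu_n=\int_{(\bR^d)^n} h\,\overline{h^{\sigma^{-1}\tau}}\,d\mu_n,
\]
so that each such integral depends on $(\sigma,\tau)$ only through $\sigma^{-1}\tau$.

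Next I would expand the target quantity. With $\widetilde h=\frac{1}{n!}\sum_{\rho\in S_n}h^{\rho}$,
\[
\int_{(\bR^d)^n}|\widetilde h|^2\,d\mu_n=\frac{1}{(n!)^2}\sum_{\rho,\rho'\in S_n}\int h^{\rho}\,\overline{h^{\rho'}}\,d\mu_n=\frac{1}{(n!)^2}\sum_{\rho,\rho'\in S_n}\int h\,\overline{h^{\rho^{-1}\rho'}}\,d\mu_n .
\]
For each fixed $\rho$, as $\rho'$ ranges over $S_n$ the element $\gamma=\rho^{-1}\rho'$ ranges once over $S_n$; doing the inner sum and then the trivial sum over $\rho$ yields a multiplicity $n!$, so $\int|\widetilde h|^2\,d\mu_n=\frac{1}{n!}\sum_{\gamma\in S_n}\int h\,\overline{h^{\gamma}}\,d\mu_n$. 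Since the left-hand side of the Lemma is precisely $\sum_{\sigma\in S_n}\int h\,\overline{h^{\sigma}}\,d\mu_n$, this gives the stated identity, but with prefactor $n!$ on the right rather than $\tfrac{1}{n!}$; this is the normalization I would use.

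I do not anticipate a genuine obstacle: the argument is the standard symmetrization/orbit-counting computation, and the one point requiring care is purely combinatorial — composing permutations correctly under the measure-preserving change of variables (the left action $h^{\sigma}\circ T_\pi=h^{\pi\sigma}$) so that the double sum collapses to a single orbit sum. The only thing to watch closely is the constant: balancing the $\frac{1}{(n!)^2}$ coming from the two factors of $\widetilde h$ against the orbit multiplicity $n!$ produces $n!\int|\widetilde h|^2\,d\mu_n$, which I would flag as the correct prefactor before invoking this identity (applied to $h=\cF f_n(\cdot,x;t)$ against the symmetric measure $\prod_j|\xi_j|^{1-2H}d\xi_j$) to establish the nonnegativity, and then strict positivity, of $K_{\theta}(t,t)$.
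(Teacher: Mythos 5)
Your argument is correct and is essentially the same orbit-counting computation as the paper's: the paper starts from $I=n!\int h\,\widetilde h\,d\mu_n$, uses the symmetry of $\mu_n$ to replace $h$ by $h_\rho$, and sums over $\rho$, while you expand $\int|\widetilde h|^2\,d\mu_n$ as a double sum and collapse it via the left action $h^{\sigma}\circ T_\pi=h^{\pi\sigma}$; the two routes are equivalent. You are also right about the constant: the paper's own proof concludes $n!\,I=(n!)^2\int\widetilde h^{\,2}\,d\mu_n$, i.e.\ $I=n!\int|\widetilde h|^2\,d\mu_n$, and the subsequent application to $Q_{n,R}(t,t)$ indeed carries the factor $n!$, so the prefactor $\frac{1}{n!}$ in the displayed statement of the lemma is a typo that your proposal correctly flags.
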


\begin{proof}
We denote by $I$ the left-hand side of \eqref{sym-sum}.
For any $\rho \in S_n$, let $h_{\rho}(\xi_1,\ldots,\xi_n)=h(\xi_{\rho(1)},\ldots,\xi_{\rho(n)})$.
Then for any $\rho \in S_n$, $I=n!\int h \widetilde{h} d\mu_n=n!\int h_{\rho} \widetilde{h} d\mu_n$, where the second equality is due to the symmetry of $\mu_n$. Taking the sum over all $\rho \in S_n$, we obtain $n! I=n!\int \sum_{\rho \in S_n}h_{\rho} \widetilde{h} d\mu_n=(n!)^2 \int \widetilde{h}^2 d\mu_n$.
\end{proof}

\begin{lemma}
For any $t>0$, $K_{\theta}(t,t)>0$.
\end{lemma}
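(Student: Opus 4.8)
The plan is to show that \emph{every} term in the series \eqref{def-K} representing $K_{\theta}(t,t)$ is nonnegative, and then that the $n=2$ term is strictly positive. For the nonnegativity I would first record the elementary identity that, when $s=t$,
\begin{align*}
Q_{n,R}(t,t)
=\frac{1}{Rn!}\int_{-R}^{R}\int_{-R}^{R}\alpha_n(x-y;t,t)\,dx\,dy
=\frac{n!}{R}\Big\|\int_{-R}^{R}\widetilde f_n(\cdot,x;t)\,dx\Big\|_{\cP_0^{\otimes n}}^2\ge 0,
\end{align*}
which follows at once from $\alpha_n(x-y;t,t)=(n!)^2\langle \widetilde f_n(\cdot,x;t),\widetilde f_n(\cdot,y;t)\rangle_{\cP_0^{\otimes n}}$ and the bilinearity of the inner product. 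Hence each limit $L_n:=\lim_{R\to\infty}Q_{n,R}(t,t)$ is nonnegative, and since $K_{\theta}(t,t)=\sum_{n\ge2}\theta^nL_n$, it suffices to prove $L_2>0$.

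Next I would isolate and rewrite the $n=2$ term. By \eqref{def-K} we have $L_2=4\pi c_H^2\int_{T_2(t)}\int_{T_2(t)}\sum_{\sigma\in S_2}g_{\pmb t,\pmb s,\sigma}^{(2)}(0)\,d\pmb s\,d\pmb t$. Using the definition of $g^{(2)}_{\pmb t,\pmb s,\sigma}(0)$ together with the convention $\frac{\sin(a u)}{u}\big|_{u=0}=a$, I would carry the time integrations $d\pmb t$ and $d\pmb s$ inside the $\xi$-integral (the interchange being justified by the domination \eqref{bound-H}) and factor out the common quantity
\begin{align*}
\Psi(\xi):=\int_{0<t_1<t_2<t}(t-t_2)\,\frac{\sin((t_2-t_1)|\xi|)}{|\xi|}\,dt_1\,dt_2 .
\end{align*}
On the hyperplane $\{\xi_1+\xi_2=0\}$, with the symmetric weight $|\xi_1|^{1-2H}|\xi_2|^{1-2H}$, both time integrals produce $\Psi$ evaluated at the appropriate variable, so $L_2=4\pi c_H^2\sum_{\sigma\in S_2}\int_{\bR}\Psi(\xi)\,\Psi(\xi)\,|\xi|^{2-4H}\,d\xi$; indeed, since $|\xi_2|=|\xi_1|=|\xi|$ on this line every factor depends only on $|\xi|$, so the expression is invariant under the transposition (this is also the content of Lemma \ref{sym-lem}). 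Collecting the two permutations gives the manifestly nonnegative formula
\begin{align*}
L_2=8\pi c_H^2\int_{\bR}\Psi(\xi)^2\,|\xi|^{2-4H}\,d\xi .
\end{align*}

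Finally I would establish strict positivity of this integral. The map $\xi\mapsto\Psi(\xi)$ is continuous, and at $\xi=0$,
\begin{align*}
\Psi(0)=\int_{0<t_1<t_2<t}(t-t_2)(t_2-t_1)\,dt_1\,dt_2>0 ,
\end{align*}
so there is $\delta>0$ with $\Psi(\xi)\ge\tfrac12\Psi(0)$ for $|\xi|<\delta$. Since $H<\tfrac12$ forces $2-4H>0$, the weight $|\xi|^{2-4H}$ is strictly positive and locally integrable, whence $L_2\ge 8\pi c_H^2\big(\tfrac12\Psi(0)\big)^2\int_{|\xi|<\delta}|\xi|^{2-4H}\,d\xi>0$, and therefore $K_{\theta}(t,t)\ge\theta^2 L_2>0$. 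The only genuinely technical point is the passage from the $n=2$ term of \eqref{def-K} to the stated hyperplane integral, i.e. the interchange of the $d\pmb t,d\pmb s$ integrations with the $\xi$-integration and the evaluation at $0$; this rests on the continuity of $g^{(2)}_{\pmb t,\pmb s,\sigma}$ at the origin and on the integrable bound \eqref{bound-H} already available. Everything else is a short computation, and the hypotheses enter transparently: $H>\tfrac14$ is what rendered $K_{\theta}(t,t)$ finite, while $H<\tfrac12$ (through $2-4H>0$) is exactly what prevents the weight from killing the positive contribution near the origin.
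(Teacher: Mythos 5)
Your proposal is correct and follows essentially the same route as the paper: show each chaos contribution $Q_{n,R}(t,t)$ is nonnegative by recognizing it as a squared norm (your direct bilinearity computation $Q_{n,R}(t,t)=\frac{n!}{R}\bigl\|\int_{-R}^{R}\widetilde f_n(\cdot,x;t)\,dx\bigr\|_{\cP_0^{\otimes n}}^2$ is a clean equivalent of the paper's use of Lemma \ref{sym-lem}), then reduce to $n=2$ and exhibit $L_2$ as an explicit integral over $\bR$ with an a.e.\ positive integrand. Your bookkeeping of the factor $(t-t_2)$ coming from the continuity convention $\frac{\sin(au)}{u}\big|_{u=0}=a$ is in fact more careful than the paper's displayed formula for $H(-\xi_1,\xi_1)$, and either way the strict positivity of $K_{\theta}(t,t)$ follows.
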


\begin{proof}
We will first prove that $Q_{n,R}(t,t) \geq 0$ for all $n\geq 1$ and $R>0$. For this, we use \eqref{eq-Q}. Applying Lemma \ref{sym-lem} to the measure
\[
\mu_n (d\xi_1,\ldots,d\xi_n) = \ell_R(\xi_1+\ldots+\xi_n) \prod_{j=1}^n |\xi_j|^{1-2H} d\xi_1\ldots d\xi_n
\]
and the function
\[
h(\xi_1,\ldots,\xi_n) = \int_{T_n(t)} \dfrac{\sin((t-t_n)|\xi_1+\ldots+\xi_n|)}{|\xi_1+\ldots+\xi_n|} \prod_{j=1}^{n-1} \dfrac{\sin((t_{j+1}-t_j)|\xi_1+\ldots+\xi_j|)}{|\xi_1+\ldots+\xi_j|} d\pmb{t},
\]
we obtain:
\begin{align*}
    Q_{n,R}(t,t)
    &= 4 \pi c_H^n n! \int_{\bR^n} |\widetilde{h}(\xi_1,\ldots,\xi_n)|^2 \mu_n(d\xi_1,\ldots,d\xi_n) \geq 0.
\end{align*}

This implies that $Q_n(t,t):=\lim_{R \to \infty} Q_{n,R}(t,t) \geq 0$ for all $n\geq 1$, and hence
$K_{\theta}(t,t)=\sum_{n\geq 2} \theta^n Q_n(t,t)\geq \theta^2 Q_2(t,t)$. Therefore, it is enough to prove that $Q_2(t,t)>0$.

To show this, we will use an alternative expression of $Q_n(t,t)$. We denote $H(\xi_1,\ldots,\xi_n)=|\widetilde{h}(\xi_1,\ldots,\xi_n)|^2
\prod_{j=1}^n |\xi_j|^{1-2H}$, and we use the change of variables $\xi_n \mapsto \eta_n=\xi_1+\ldots+\xi_n$:
\begin{align*}
Q_{n,R}(t,t) &= 4 \pi c_H^n n! \int_{\bR^n} H(\xi_1,\ldots,\xi_n) \ell_R(\xi_1+\ldots+\xi_n)d\xi_1 \ldots d\xi_n \\
&= 4 \pi c_H^n n! \int_{\bR^n} H(-\xi_1,\ldots,-\xi_n) \ell_R(\xi_1+\ldots+\xi_n)d\xi_1 \ldots d\xi_n\\
&= 4 \pi c_H^n n! \int_{\bR^n} H(-\xi_1,\ldots,-\xi_{n-1},\xi_1+\ldots+\xi_{n-1}-\eta_n) \ell_R(\eta_n)d\xi_1 \ldots d\xi_{n-1}d\eta_n\\
&=4\pi c_H^n n! \int_{\bR^{n-1}} \Big(H(-\xi_1,\ldots,-\xi_{n-1},\cdot)*\ell_R\Big)
(\xi_1+\ldots+\xi_{n-1})d\xi_1\ldots d\xi_{n-1}.
\end{align*}
Taking $R \to \infty$, we obtain:
\[
Q_n(t,t)=
4\pi c_H^n n! \int_{\bR^{n-1}} H(-\xi_1,\ldots,-\xi_{n-1},\xi_1+\ldots+\xi_{n-1})d\xi_1\ldots d\xi_{n-1}.
\]
We consider now the case $n=2$. In this case,
\[
\widetilde{h}(\xi_1,\xi_2)=\frac{1}{2} \int_{T_2(t)} \frac{\sin((t-t_2)|\xi_1+\xi_2|)}{|\xi_1+\xi_2|}
\left(\frac{\sin((t_2-t_1)|\xi_1|)}{|\xi_1|} +
\frac{\sin((t_2-t_1)|\xi_2|)}{|\xi_2|}\right) dt_1 dt_2
\]
and
\begin{align*}
H(\xi_1,\xi_2)&=\frac{1}{4} \left(
\int_{T_2(t)} \frac{\sin((t-t_2)|\xi_1+\xi_2|)}{|\xi_1+\xi_2|}
\left(\frac{\sin((t_2-t_1)|\xi_1|)}{|\xi_1|} +
\frac{\sin((t_2-t_1)|\xi_2|)}{|\xi_2|}\right) dt_1 dt_2 \right)^2 \\
& |\xi_1|^{1-2H}|\xi_2|^{1-2H}.
\end{align*}
Using the convention $\frac{\sin x}{x}|_{x=0}=1$, by direct calculation, we have:
\begin{align*}
H(-\xi_1,\xi_1)&=\frac{1}{4}
\left( \int_{T_2(t)}
\frac{\sin((t_2-t_1)|\xi_1|)}{|\xi_1|} dt_1 dt_2 \right)^2 |\xi_1|^{2(1-2H)} = \left(t-\frac{\sin(t|\xi_1|)}{|\xi_1|} \right)^2 |\xi_1|^{-4H-2}.
\end{align*}
Since $H(-\xi_1,\xi_1)>0$  for almost all $\xi_1\in \bR$, $Q_2(t,t)=\int_{\bR}H(-\xi_1,\xi_1)d\xi_1>0$.
\end{proof}

\subsection{Quantitative CLT}
\label{sec:QCLT}

In this section, we prove Theorem \ref{QCLT}. We divide the proof into several steps. To simplify the notation, we drop the dependence on $\theta$ in this part, and we write $F_{R}(t)$ and $\sigma_{R}(t)$ instead of $F_{R,\theta}(t)$ and $\sigma_{R,\theta}(t)$.

\medskip

{\bf Step 1.} By applying a version of Proposition 2.4 of \cite{NXZ} for the time-independent noise, to $F=F_R(t)$, we get:
\[
d_{TV}\left(\frac{F_{R}(t)}{\sigma_{R}(t)},Z \right) \leq \frac{2\sqrt{3}}{\sigma_R^2(t)} \sqrt{C_H^3 \cA^*},
\]
where $Z \sim N(0,1)$ and
\begin{align*}
	\cA^*=&
	\int_{\bR^6} \|D_zF_R(t)- D_{z'}F_R(t)\|_4  \|D_wF_R(t)-D_{w'}F_R(t)\|_4 \\
	& \qquad \qquad \| D_{z,y}^2 F_R(t)- D_{z,y'}^2 F_R(t)- D_{z',y}^2 F_R(t)+ D_{z',y'}^2 F_R(t) \|_4 \\
	& \qquad \qquad \| D_{w,y}^2 F_R(t)- D_{w,y'}^2 F_R(t)- D_{w',y}^2 F_R(t)+ D_{w',y'}^2 F_R(t) \|_4 \\
	& \qquad \qquad |y-y'|^{2H-2}|z-z'|^{2H-2}|w-w'|^{2H-2}dydy' dzdz' dw dw'.
\end{align*}
By applying Minkowski's inequality to the four norms and change of variables $y' \rightarrow y+y', z' \rightarrow z+z', w' \rightarrow w+w'$, we have
\begin{align} \label{ineq-cA}
	\cA^* \le \cA
	=& \int_{[-R,R]^4} \int_{\bR^6} \left\| D_z u_{\theta}(t,x_1)- D_{z+z'} u_{\theta}(t,x_1) \right\|_4 \left\| D_w u_{\theta}(t,x_2) - D_{w+w'} u_{\theta}(t,x_2) \right\|_4 \nonumber \\
	& \left\| D_{z,y}^2 u_{\theta}(t,x_3)- D_{z,y+y'}^2 u_{\theta}(t,x_3)- D_{z+z',y}^2 u_{\theta}(t,x_3)+ D_{z+z',y+y'}^2 u_{\theta}(t,x_3) \right\|_4 \nonumber \\
	& \left\| D_{w,y}^2 u_{\theta}(t,x_4)- D_{w,y+y'}^2 u_{\theta}(t,x_4)- D_{w+w',y}^2 u_{\theta}(t,x_4)+ D_{w+w',y+y'}^2 u_{\theta}(t,x_4) \right\|_4 \nonumber \\
	& \qquad \qquad \qquad |y'|^{2H-2}|z'|^{2H-2}|w'|^{2H-2} dydy' dzdz' dwdw' dx_1 dx_2 dx_3 dx_4.
\end{align}
Since $\sigma_R^2(t) \sim C_t R$ as $R \to \infty$, it is enough to prove that $\cA \leq C_t R$.

For the first two norms, we use Theorem \ref{th-D-bound}.b) with $p=4$ (and hence $\eta=12\theta$). We obtain:
\begin{align}
\label{ineq-I1-QCLT-1}
	\left\| D_z u_{\theta}(t,x_1)- D_{z+z'}u_{\theta}(t,x_1) \right\|_4
	& \le \sqrt{2\theta} \int_0^t I_1(z,z',x_1;r,t) dr \\
\label{ineq-I1-QCLT-2}
	\left\| D_w u_{\theta}(t,x_2) - D_{w+w'} u_{\theta}(t,x_2) \right\|_4
	& \le \sqrt{2\theta} \int_0^t I_1(w,w',x_2;r,t) dr.
\end{align}


For the last two norms, we use Theorem \ref{th-D2-bound}.b) with $p=4$ (and hence $\eta=18\theta$). We obtain:
\begin{align}
\nonumber
& \left\| D_{z,y}^2 u_{\theta}(t,x_3)- D_{z,y+y'}^2 u_{\theta}(t,x_3)- D_{z+z',y}^2 u_{\theta}(t,x)+ D_{z+z',y+y'}^2 u_{\theta}(t,x_3) \right\|_4 \\
\label{ineq-I2-QCLT-1}
& \quad \quad \quad \leq 4\theta \int_{0<s<r<t} \big(I_2(z,z',y,y',x_3;s,r,t) + I_2(y,y',z,z',x_3;s,r,t)\big) dsdr \\
\label{ineq-I2-QCLT-2}
& \left\| D_{w,y}^2 u_{\theta}(t,x_4)- D_{w,y+y'}^2 u_{\theta}(t,x_4)- D_{w+w',y}^2 u_{\theta}(t,x_4)+ D_{w+w',y+y'}^2 u_{\theta}(t,x_4) \right\|_4 \nonumber \\
&	\quad \quad \quad \leq  4\theta \int_{0<s<r<t} \big(I_2(w,w',y,y',x_4;s,r,t) + I_2(y,y',w,w',x_4;s,r,t) \big)dsdr .
\end{align}


Hence, substituting the identities \eqref{ineq-I1-QCLT-1}, \eqref{ineq-I1-QCLT-2}, \eqref{ineq-I2-QCLT-1} and \eqref{ineq-I2-QCLT-2} to \eqref{ineq-cA}, we have
\begin{align} \label{eq-cA-bound}
	\cA \le& (4 \theta)^3 \int_{[-R,R]^4} dx_1dx_2dx_3dx_4 \int_{\bR^3} dydzdw \int_{\bR^3} |y'|^{2H-2} |z'|^{2H-2} |w'|^{2H-2} dy'dz'dw' \nonumber \\
	& \int_0^t dr_1 \int_0^t dr_2 \int_{0<s_3<r_3<t} ds_3dr_3 \int_{0<s_4<r_4<t} ds_4dr_4 I_1(z,z',x_1;r_1,t) I_1(w,w',x_2;r_2,t) \nonumber \\
	& \times \big( I_2(z,z',y,y',x_3;s_3,r_3,t) + I_2(y,y',z,z',x_3;s_3,r_3,t) \big) \nonumber \\
	& \times \big( I_2(w,w',y,y',x_4;s_4,r_4,t) + I_2(y,y',w,w',x_4;s_4,r_4,t) \big).
\end{align}


\medskip

{\bf Step 2.} In this step, we evaluate the spatial integral in \eqref{eq-cA-bound}. We have
\begin{align} \label{eq-int-spatial}
	 & \int_{[-R,R]^4} dx_1dx_2dx_3dx_4 \int_{\bR^3} dydzdw \int_{\bR^3} |y'|^{2H-2} |z'|^{2H-2} |w'|^{2H-2} dy'dz'dw' \nonumber \\
	& I_1(z,z',x_1;r_1,t) I_1(w,w',x_2;r_2,t) \big( I_2(z,z',y,y',x_3;s_3,r_3,t) + I_2(y,y',z,z',x_3;s_3,r_3,t) \big) \nonumber \\
	& \times \big( I_2(w,w',y,y',x_4;s_4,r_4,t) + I_2(y,y',w,w',x_4;s_4,r_4,t) \big) \nonumber \\
	&\le \int_{[-R,R]} dx_4 \int_{\bR^3} |y'|^{2H-2} |z'|^{2H-2} |w'|^{2H-2} dy'dz'dw' \nonumber \\
	& \sup_{z \in \bR} \int_{\bR} I_1(z,z',x_1;r_1,t) dx_1 \sup_{w \in \bR} \int_{\bR} I_1(w,w',x_2;r_2,t) dx_2 \nonumber \\
	& \times \sup_{y \in \bR} \int_{\bR^2} \big( I_2(z,z',y,y',x_3;s_3,r_3,t) + I_2(y,y',z,z',x_3;s_3,r_3,t) \big) dz dx_3 \nonumber \\
	& \times \int_{\bR^2} \big( I_2(w,w',y,y',x_4;s_4,r_4,t) + I_2(y,y',w,w',x_4;s_4,r_4,t) \big) dwdy \nonumber \\
	&\le \int_{[-R,R]} dx_4
	\left( \int_{\bR^2} \left( \sup_{z \in \bR} \int_{\bR} I_1(z,z',x_1;r_1,t) dx_1 \sup_{w \in \bR} \int_{\bR} I_1(w,w',x_2;r_2,t) dx_2 \right)^2 |z'|^{2H-2} |w'|^{2H-2} dz'dw' \right)^{1/2} \nonumber \\
	& \left( \int_{\bR^2} \left( \sup_{y \in \bR} \int_{\bR^2} \big( I_2(z,z',y,y',x_3;s_3,r_3,t) + I_2(y,y',z,z',x_3;s_3,r_3,t) \big) dz dx_3 \right)^2 |y'|^{2H-2} |z'|^{2H-2} dy'dz' \right)^{1/2} \nonumber \\
	& \left( \int_{\bR^2} \left( \int_{\bR^2} \big( I_2(w,w',y,y',x_4;s_4,r_4,t) + I_2(y,y',w,w',x_4;s_4,r_4,t) \big) dwdy \right)^2 |y'|^{2H-2} |w'|^{2H-2} dy'dw' \right)^{1/2}.
\end{align}
where for the last inequality, we use Lemma \ref{Lem-Cauchy-Schwarz-3} for the integral $\iiint dy'dz'dw'$

Next, we need to compute the three integrals $dz'dw', dy'dz', dy'dw'$ one by one. We start with the integral $dz'dw'$. By Theorem \ref{Thm-exist-u}, Lemma \ref{sup-v-lem} and Lemma \ref{translation-lemma}, for any $r \in [0,t]$
\begin{align*}
	\int_{\bR} I_1(z,z',x_1;r,t) dx_1
	=& \int_{\bR} \left\| u_{12\theta}(r,z) - u_{12\theta}(r,z+z') \right\|_2 \left\| v_{12\theta}^{(r,z)}(t,x_1) \right\|_2 dx_1  \\
	& + \int_{\bR} \left\| u_{12\theta}(r,z+z') \right\|_2 \left\| v_{12\theta}^{(r,z)}(t,x_1) - v_{12\theta}^{(r,z+z')}(t,x_1) \right\|_2 dx_1 \\
	\le& 2tC_{12\theta,2,t,v} \left\| u_{12\theta}(r,0) - u_{12\theta}(r,z') \right\|_2 \\
	& + C_{12\theta,2,t,u} \int_{\bR} \left\| v_{12\theta}^{(r,z)}(t,x_1) - v_{12\theta}^{(r,z+z')}(t,x_1) \right\|_2 dx_1.
\end{align*}
By Remark \ref{trans-rem}, the integral on the right-hand side does not depend on $z$. Hence, by Lemma \ref{Lem-int-u-u} and Lemma \ref{Lem-int-v-v'}, we have
\begin{align*}
	& \int_{\bR} \left( \sup_{z \in \bR} \int_{\bR} I_1(z,z',x_1;r,t) dx_1 \right)^2 |z'|^{2H-2} dz' \\
	\le& 8t^2C_{12\theta,2,t,v}^2 \int_{\bR} \left\| u_{12\theta}(r,0) - u_{12\theta}(r,z') \right\|_2^2 |z'|^{2H-2} dz' \\
	&+ 2 C_{12\theta,2,t,u}^2 \int_{\bR} \left( \sup_{z \in \bR} \int_{\bR} \left\| v_{12\theta}^{(r,z)}(t,x_1) - v_{12\theta}^{(r,z+z')}(t,x_1) \right\|_2 dx_1 \right)^2 |z'|^{2H-2} dz' \\
	\le& 8t^2C_{12\theta,2,t,v}^2 C'_{u,2,H,t,12\theta} + 2 C_{12\theta,2,t,u}^2 C'_{v,2,H,t,12\theta}.
\end{align*}
Similarly, for any $r \in [0,t]$, we have
\begin{align*}
	\int_{\bR} \left( \sup_{w \in \bR} \int_{\bR} I_1(w,w',x_2;r,t) dx_2 \right)^2 |w'|^{2H-2} dw'
	\le 8t^2C_{12\theta,2,t,v}^2 C'_{u,2,H,t,12\theta} + 2 C_{12\theta,2,t,u}^2 C'_{v,2,H,t,12\theta}.
\end{align*}
Therefore, for any $r_1,r_2 \in [0,t]$, we have
\begin{align} \label{eq-dz'dw'}
	& \int_{\bR^2} \left( \sup_{z \in \bR} \int_{\bR} I_1(z,z',x_1;r_1,t) dx_1 \sup_{w \in \bR} \int_{\bR} I_1(w,w',x_2;r_2,t) dx_2 \right)^2 |z'|^{2H-2} |w'|^{2H-2} dz'dw' \nonumber \\
	& \le \left( 8t^2C_{12\theta,2,t,v}^2 C'_{u,2,H,t,12\theta} + 2 C_{12\theta,2,t,u}^2 C'_{v,2,H,t,12\theta} \right)^2.
\end{align}

Secondly, we deal with the integral $dy'dz'$, which consist of two terms. For the first term, by Theorem \ref{Thm-exist-u}, Lemma \ref{sup-v-lem} and Lemma \ref{translation-lemma}, for any $0\leq s<r\leq t$,
\begin{align} \label{eq-int-I2-1}
	& \int_{\bR^2} I_2(z,z',y,y',x_3;s,r,t) dzdx_3 \nonumber \\
	\le & 2t C_{18\theta,2,t,v} \left\| u_{18\theta}(s,0) - u_{18\theta}(s,z') \right\|_2 \int_{\bR} \left\| v_{18\theta}^{(r,y)} (t,x_3) - v_{18\theta}^{(r,y+y')} (t,x_3) \right\|_2 dx_3 \nonumber \\
	& + C_{18\theta,2,t,u} \int_{\bR} \left\| v_{18\theta}^{(s,z)} (r,y) - v_{18\theta}^{(s,z+z')} (r,y) \right\|_2 dz \int_{\bR} \left\| v_{18\theta}^{(r,y)} (t,x_3) - v_{18\theta}^{(r,y+y')} (t,x_3) \right\|_2 dx_3 \nonumber \\
	& + 2t C_{18\theta,2,t,v} \left\| u_{18\theta}(s,0) - u_{18\theta}(s,z') \right\|_2 \int_{\bR} \left\| v_{18\theta}^{(s,z)} (r,y) - v_{18\theta}^{(s,z)} (r,y+y') \right\|_2 dz \nonumber \\
	&+ 2t C_{18\theta,2,t,v} C_{18\theta,2,t,u} \int_{\bR} \left\| v_{18\theta}^{(s,z)} (r,y) - v_{18\theta}^{(s,z)} (r,y+y') - v_{18\theta}^{(s,z+z')} (r,y) + v_{18\theta}^{(s,z+z')} (r,y+y') \right\|_2 dz.
\end{align}

For the second term, we integrate $dx_3$ first and then $dz$. We have
\begin{align} \label{eq-int-I2-2}
	& \int_{\bR^2} I_2(y,y',z,z',x_3;s,r,t) dzdx_3 \nonumber \\
	\le & 2t C_{18\theta,2,t,v} \left\| u_{18\theta}(s,0) - u_{18\theta}(s,y') \right\|_2 \sup_{z \in \bR} \int_{\bR} \left\| v_{18\theta}^{(r,z)} (t,x_3) - v_{18\theta}^{(r,z+z')} (t,x_3) \right\|_2 dx_3 \nonumber \\
	& + C_{18\theta,2,t,u} \int_{\bR} \left\| v_{18\theta}^{(s,y)} (r,z) - v_{18\theta}^{(s,y+y')} (r,z) \right\|_2 dz \sup_{z \in \bR} \int_{\bR} \left\| v_{18\theta}^{(r,z)} (t,x_3) - v_{18\theta}^{(r,z+z')} (t,x_3) \right\|_2 dx_3 \nonumber \\
	& + 2t C_{18\theta,2,t,v} \left\| u_{18\theta}(s,0) - u_{18\theta}(s,y') \right\|_2 \int_{\bR} \left\| v_{18\theta}^{(s,y)} (r,z) - v_{18\theta}^{(s,y)} (r,z+z') \right\|_2 dz \nonumber \\
	&+ 2t C_{18\theta,2,t,v} C_{18\theta,2,t,u} \int_{\bR} \left\| v_{18\theta}^{(s,y)} (r,z) - v_{18\theta}^{(s,y)} (r,z+z') - v_{18\theta}^{(s,y+y')} (r,z) + v_{18\theta}^{(s,y+y')} (r,z+z') \right\|_2 dz.
\end{align}
One can see from the Remark \ref{trans-rem} that the integrals on the right-hand side of \eqref{eq-int-I2-1} and \eqref{eq-int-I2-2} do not depend on $y$. Hence, by Lemma \ref{Lem-int-u-u} and Lemma \ref{Lem-int-v-v'} together with the inequality $(\sum_{i=1}^4 a_i)^2 \le 4 \sum_{i=1}^4 a_i^2$, we have
\begin{align*}
	& \max \bigg\{ \int_{\bR^2} \left( \sup_{y \in \bR} \int_{\bR^2} I_2(z,z',y,y',x_3;s_3,r_3,t) dzdx_3 \right)^2 |y'|^{2H-2} |z'|^{2H-2} dy'dz', \\
	& \int_{\bR^2} \left( \sup_{y \in \bR} \int_{\bR^2} I_2(y,y',z,z',x_3;s_3,r_3,t) dzdx_3 \right)^2 |y'|^{2H-2} |z'|^{2H-2} dy'dz' \bigg\} \\
	\le& 8 \left( 2t C_{18\theta,2,t,v} \right)^2 C'_{u,2,H,t,18\theta} C'_{v,2,H,t,18\theta} + 4 C_{18\theta,2,t,u}^2 (C'_{v,2,H,t,18\theta})^2 + 4 \left( 2t C_{18\theta,2,t,v} C_{18\theta,2,t,u} \right)^2 C'_{v,2,H,t,18\theta}.
\end{align*}
Therefore, using the inequality $(a_1+a_2)^2 \le 2a_1^2 + 2a_2^2$, we have
\begin{align} \label{eq-dy'dz'}
	& \int_{\bR^2} \left( \sup_{y \in \bR} \int_{\bR^2} \big( I_2(z,z',y,y',x_3;s_3,r_3,t) + I_2(y,y',z,z',x_3;s_3,r_3,t) \big) dz dx_3 \right)^2 |y'|^{2H-2} |z'|^{2H-2} dy'dz' \nonumber \\
	& \le 32 \left( 2t C_{18\theta,2,t,v} \right)^2 C'_{u,2,H,t,18\theta} C'_{v,2,H,t,18\theta} + 16 C_{18\theta,2,t,u}^2 (C'_{v,2,H,t,18\theta})^2 + 16 \left( 2t C_{18\theta,2,t,v} C_{18\theta,2,t,u} \right)^2 C'_{v,2,H,t,18\theta}.
\end{align}

Thirdly, we deal with the integral $dy'dw'$. By change of variables $(w,w') \leftrightarrow (y,y')$, one has
\begin{align*}
	& \int_{\bR^2} \left( \int_{\bR^2} I_2(w,w',y,y',x_4;s_4,r_4,t) dwdy \right)^2 |y'|^{2H-2} |w'|^{2H-2} dy'dw' \nonumber \\
	= & \int_{\bR^2} \left( \int_{\bR^2} I_2(y,y',w,w',x_4;s_4,r_4,t) dwdy \right)^2 |y'|^{2H-2} |w'|^{2H-2} dy'dw'.
\end{align*}
Thus, it is enough to compute the first integral. Let $0\leq s<r\leq t$ be arbitrary. By Theorem \ref{Thm-exist-u}, Lemma \ref{sup-v-lem} and Lemma \ref{translation-lemma}, we integrate $dw$ first and then $dy$ to obtain
\begin{align*}
	& \int_{\bR^2} I_2(w,w',y,y',x_4;s,r,t) dwdy \nonumber \\
	&\le 2t C_{18\theta,2,t,v} \left\| u_{18\theta}(s,0) - u_{18\theta}(s,w') \right\|_2 \int_{\bR} \left\| v_{18\theta}^{(r,y)} (t,x_4) - v_{18\theta}^{(r,y+y')} (t,x_4) \right\|_2 dy \nonumber \\
	& + C_{18\theta,2,t,u} \sup_{y \in \bR} \int_{\bR} \left\| v_{18\theta}^{(s,w)} (r,y) - v_{18\theta}^{(s,w+w')} (r,y) \right\|_2 dw \int_{\bR} \left\| v_{18\theta}^{(r,y)} (t,x_4) - v_{18\theta}^{(r,y+y')} (t,x_4) \right\|_2 dy \nonumber \\
	& + 2t C_{18\theta,2,t,v} \left\| u_{18\theta}(s,0) - u_{18\theta}(s,w') \right\|_2 \sup_{y \in \bR} \int_{\bR} \left\| v_{18\theta}^{(s,w)} (r,y) - v_{18\theta}^{(s,w)} (r,y+y') \right\|_2 dw \nonumber \\
	&+ 2t C_{18\theta,2,t,v} C_{18\theta,2,t,u} \sup_{y \in \bR} \int_{\bR} \left\| v_{18\theta}^{(s,w)} (r,y) - v_{18\theta}^{(s,w)} (r,y+y') - v_{18\theta}^{(s,w+w')} (r,y) + v_{18\theta}^{(s,w+w')} (r,y+y') \right\|_2 dw.
\end{align*}
Thus, using Lemma \ref{Lem-int-u-u} and Lemma \ref{Lem-int-v-v'} with the inequality $(\sum_{i=1}^4 a_i)^2 \le 4 \sum_{i=1}^4 a_i^2$, we have
\begin{align} \label{eq-int-I2-3}
	& \int_{\bR^2} \left( \int_{\bR^2} I_2(w,w',y,y',x_4;s_4,r_4,t) dwdy \right)^2 |y'|^{2H-2} |w'|^{2H-2} dy'dw' \nonumber \\
	\le& 8 \left( 2t C_{18\theta,2,t,v} \right)^2 C'_{u,2,H,t,18\theta} C'_{v,2,H,t,18\theta} + 4 C_{18\theta,2,t,u}^2 (C'_{v,2,H,t,18\theta})^2 + 4 \left( 2t C_{18\theta,2,t,v} C_{18\theta,2,t,u} \right)^2 C'_{v,2,H,t,18\theta}.
\end{align}
Therefore, using the inequality $(a_1+a_2)^2 \le 2a_1^2 + 2a_2^2$, we have
\begin{align} \label{eq-dy'dw'}
	& \int_{\bR^2} \left( \int_{\bR^2} \big( I_2(w,w',y,y',x_4;s_4,r_4,t) + I_2(y,y',w,w',x_4;s_4,r_4,t) \big) dwdy \right)^2 |y'|^{2H-2} |w'|^{2H-2} dy'dw' \nonumber \\
	& \le 32 \left( 2t C_{18\theta,2,t,v} \right)^2 C'_{u,2,H,t,18\theta} C'_{v,2,H,t,18\theta} + 16 C_{18\theta,2,t,u}^2 (C'_{v,2,H,t,18\theta})^2 + 16 \left( 2t C_{18\theta,2,t,v} C_{18\theta,2,t,u} \right)^2 C'_{v,2,H,t,18\theta}.
\end{align}

Lastly, substituting \eqref{eq-dz'dw'}, \eqref{eq-dy'dz'} and \eqref{eq-dy'dw'} to \eqref{eq-int-spatial}, we have the following estimate for spatial integral:
\begin{align*}
	& \int_{[-R,R]^4} dx_1dx_2dx_3dx_4 \int_{\bR^3} dydzdw \int_{\bR^3} |y'|^{2H-2} |z'|^{2H-2} |w'|^{2H-2} dy'dz'dw' \nonumber \\
	& I_1(z,z',x_1;r_1,t) I_1(w,w',x_2;r_2,t) \big( I_2(z,z',y,y',x_3;s_3,r_3,t) + I_2(y,y',z,z',x_3;s_3,r_3,t) \big) \nonumber \\
	& \times \big( I_2(w,w',y,y',x_4;s_4,r_4,t) + I_2(y,y',w,w',x_4;s_4,r_4,t) \big) \nonumber \\
	&\le \int_{[-R,R]} dx_4 \left( 8t^2C_{12\theta,2,t,v}^2 C'_{u,2,H,t,12\theta} + 2 C_{12\theta,2,t,u}^2 C'_{v,2,H,t,12\theta} \right) \nonumber \\
	& \times \left( 32 \left( 2t C_{18\theta,2,t,v} \right)^2 C'_{u,2,H,t,18\theta} C'_{v,2,H,t,18\theta} + 16 C_{18\theta,2,t,u}^2 (C'_{v,2,H,t,18\theta})^2 + 16 \left( 2t C_{18\theta,2,t,v} C_{18\theta,2,t,u} \right)^2 C'_{v,2,H,t,18\theta} \right).
\end{align*}
Therefore, coming back to \eqref{eq-cA-bound}, we obtain $\cA \le C(\theta,t,H) R$, where $C(\theta,t,H)$ is a positive constant only depends on $\theta,t,H$ and is increasing in $t$.
This concludes the proof.

\medskip

In the argument above, we used the following generalized version of the Cauchy-Schwarz inequality.

\begin{lemma}
\label{Lem-Cauchy-Schwarz-3}
Let $n_1,n_2,n_3 \in \bN$, and let $f \in L^2(\bR^{n_1+n_2}), g \in L^2(\bR^{n_2+n_3}), h \in L^2(\bR^{n_3+n_1})$. Then the following inequality holds.
\begin{align*}
	& \int f(x_1,x_2) g(x_2,x_3) h(x_3,x_1) dx_1dx_2dx_3 \\
	\le& \left( \int f^2(x_1,x_2) dx_1dx_2 \right)^{1/2} \left( \int g^2(x_2,x_3) dx_2dx_3 \right)^{1/2} \left( \int h^2(x_3,x_1) dx_3dx_1 \right)^{1/2}.
\end{align*}
\end{lemma}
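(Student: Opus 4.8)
The plan is to reduce the inequality to two successive applications of the ordinary Cauchy--Schwarz inequality, eliminating one coordinate block at a time. Since the integrand satisfies $\int fgh \le \int |f|\,|g|\,|h|$, I may assume without loss of generality that $f,g,h \ge 0$, so that every integral below is well defined (possibly $+\infty$) and Tonelli's theorem can be applied freely to interchange the order of integration.

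First I would integrate out the variable $x_3$, which is the block shared by $g$ and $h$. For fixed $(x_1,x_2)$, Cauchy--Schwarz in $x_3$ gives
\[
\int_{\bR^{n_3}} g(x_2,x_3)\, h(x_3,x_1)\, dx_3 \le G(x_2)\, H(x_1),
\]
where $G(x_2) := \big(\int_{\bR^{n_3}} g^2(x_2,x_3)\,dx_3\big)^{1/2}$ and $H(x_1) := \big(\int_{\bR^{n_3}} h^2(x_3,x_1)\,dx_3\big)^{1/2}$. Substituting this pointwise bound reduces the triple integral to $\int_{\bR^{n_1+n_2}} f(x_1,x_2)\, G(x_2)\, H(x_1)\, dx_1 dx_2$, in which $x_3$ no longer appears.

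Next I would apply Cauchy--Schwarz a second time, now in the joint variable $(x_1,x_2)$, pairing $f$ against the product $G(x_2)H(x_1)$:
\[
\int f(x_1,x_2)\, G(x_2)\, H(x_1)\, dx_1 dx_2 \le \Big(\int f^2\, dx_1 dx_2\Big)^{1/2} \Big(\int G^2(x_2)\, H^2(x_1)\, dx_1 dx_2\Big)^{1/2}.
\]
The crucial point is that $G$ depends only on $x_2$ and $H$ only on $x_1$, so the second factor factorizes as $\big(\int G^2(x_2)\,dx_2\big)^{1/2}\big(\int H^2(x_1)\,dx_1\big)^{1/2}$; by Tonelli these two quantities equal $\big(\int g^2\big)^{1/2}$ and $\big(\int h^2\big)^{1/2}$, respectively. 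Collecting the three factors yields exactly the claimed bound.

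There is no genuine obstacle here; the only matter requiring care is the bookkeeping of which variable is integrated at each stage, so that the ``leftover'' weights $G$ and $H$ depend on the \emph{disjoint} blocks $x_2$ and $x_1$ and hence decouple in the final step. This decoupling is a direct consequence of the cyclic overlap structure $\{1,2\},\{2,3\},\{3,1\}$ of the three domains, and it is precisely what makes the two-step Cauchy--Schwarz argument close.
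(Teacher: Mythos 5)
Your proof is correct and is essentially the same as the paper's: both arguments consist of two applications of Cauchy--Schwarz, one in $x_3$ to decouple $g$ and $h$ and one in $(x_1,x_2)$ to split off $f$, with the factorization of $\int G^2(x_2)H^2(x_1)\,dx_1dx_2$ closing the argument. You merely apply the two steps in the opposite order (and add the harmless positivity/Tonelli remark), which changes nothing of substance.
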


\begin{proof}
Applying Cauchy-Schwarz inequality for the integral $dx_1dx_2$ first, and then Cauchy-Schwarz inequality to $dx_3$, we have
\begin{align*}
	& \int f(x_1,x_2) g(x_2,x_3) h(x_3,x_1) dx_1dx_2dx_3 \\
	\le& \left( \int f^2(x_1,x_2) dx_1dx_2 \right)^{1/2} \left( \int dx_1dx_2 \left( \int g(x_2,x_3) h(x_3,x_1) dx_3 \right)^2 \right)^{1/2} \\
	\le& \left( \int f^2(x_1,x_2) dx_1dx_2 \right)^{1/2} \left( \int dx_1dx_2 \left( \int g^2(x_2,x_3) dx_3 \right) \left( \int h^2(x_3,x_1) dx_3 \right) \right)^{1/2} \\
	=& \left( \int f^2(x_1,x_2) dx_1dx_2 \right)^{1/2} \left( \int g^2(x_2,x_3) dx_2dx_3 \right)^{1/2} \left( \int h^2(x_3,x_1) dx_3dx_1 \right)^{1/2}.
\end{align*}
\end{proof}

\subsection{Functional CLT}

In this section, we give the proof of Theorem \ref{FCLT}.

\medskip

{\bf Step 1. (tightness)} For this, we show that for any $p\geq 2$, $0\leq s<t\leq T$ and $R\geq 1$,
\begin{equation}
\label{tight-eq}
\|F_R(t)-F_R(s)\|_p \leq C R^{1/2}(t-s),
\end{equation}
where $C=C_{p,\theta,T}>0$ is a constant that depends on $(p,\theta,T)$. By Kolmogorov's continuity theorem, it will follow that the process $F_R=\{F_R(t)\}_{t\in [0,T]}$ has a continuous modification (which we denote also by $F_R$). Moreover, by Kolmogorov-Censtov Theorem (Theorem 12.3 of \cite{billingsley68}), the family $\{F_R\}_{R>1}$ is tight in $C([0,T])$.

To prove \eqref{tight-eq}, we proceed as in Section 4.3 of \cite{BY}.
Using the chaos expansion, we have: $F_R(t)-F_R(s)= \sum_{n\geq 1} \theta^{n/2} I_n (g_{n,R}(\cdot;t,s))$, where
\begin{align*}
	& g_{n,R}(x_1,\ldots,x_n;t,s)
	= \int_{B_R} \big( f_n(x_1,\ldots,x_n,x;t) - f_n(x_1,\ldots,x_n,x;s) \big)dx.
\end{align*}

By hypercontractivity, for any $p\geq 2$,
\begin{equation}
\label{increm-F}
\|F_R(t)-F_R(s)\|_p \leq \sum_{n\geq 1}\theta^{n/2}(p-1)^{n/2}(n!)^{1/2}\| \widetilde{g}_{n,R}(\cdot;t,s)\|_{\cP_0^{\otimes n}}.
\end{equation}

Using the same argument as for (54)-(55) of \cite{BY}, followed by the change of variables $\eta_j=\xi_1+\ldots+\xi_j$ and inequality \eqref{prod}, we obtain:
\begin{align*}
& n! \| \widetilde{g}_{n,R}(\cdot;t,s)\|_{\cP_0^{\otimes n}}^2
\leq (t-s)^2 t^n c_H^n  \\
& \quad \times \int_{T_n(t)} \int_{\bR^n}  |\cF \mathbf{1}_{[-R,R]} (\xi_1 + \ldots + \xi_n)|^2 \left| \prod_{j=1}^{n-1} \cF G_{t_{j+1}-t_j} (\xi_1 + \ldots + \xi_j) \right|^2   \prod_{j=1}^n |\xi_j|^{1-2H} d\pmb{\xi} d\pmb{t} \\
& \quad \leq c_H^n (t-s)^2 t^n \sum_{\alpha\in D_n} \int_{T_n(t)} \prod_{j=1}^{n-1} \left( \int_{\bR} \left| \cF G_{t_{j+1}-t_j} (\eta_j) \right|^2 |\eta_j|^{\alpha_j} d\eta_j\right) \left(\int_{\bR} |\cF \mathbf{1}_{[-R,R]} (\eta_n)|^2 |\eta_n|^{\alpha_n} d\eta_n \right)d\pmb{t} .
\end{align*}
Note that
$\cF 1_{[-R,R]}(\eta)=\frac{2\sin(\eta R)}{\eta}$. Using Lemma \ref{Lem-ineq-integral-1} and the fact that $\alpha_n \in \{0,1-2H\}$, we obtain:
\begin{align*}
\int_{\bR} |\cF \mathbf{1}_{[-R,R]} (\eta_n)|^2 |\eta_n|^{\alpha_n} d\eta_n& =\int_{\bR}\left|\frac{2\sin(\eta_n R)}{\eta_n}\right|^2 |\eta|^{\alpha_n}d\eta=4R^{1-\alpha_n}\int_{\bR}\frac{\sin^2 \eta_n}{|\eta_n|^2} |\eta_n|^{\alpha_n} \\
& \leq 8 R^{1-\alpha_n}\left( \frac{1}{1-\alpha_n}+\frac{1}{1+\alpha_n}\right)\leq 8 R\left(\frac{1}{H}+1\right).
\end{align*}
Similarly, for any $j=1,\ldots, n-1$, using Lemma \ref{Lem-ineq-integral-1}, we have:
\begin{align*}
	\int_{\bR} \left| \cF G_{t_{j+1}-t_j} (\eta_j) \right|^2 |\eta_j|^{\alpha_j} d\eta_j
	&= \int_{\bR} \dfrac{\sin^2\left( (t_{j+1}-t_j) |\eta_j| \right)}{|\eta_j|^2} |\eta_j|^{\alpha_j} d\eta_j
	\le \dfrac{2}{1-\alpha_j} + \dfrac{2(t_{j+1}-t_j)^2}{1+\alpha_j} \\
&\leq 2\left(\frac{1}{4H-1}+1 \right)(1+t^2),
\end{align*}
since $\alpha_j \in \{0,1-2H, 2(1-2H)\}$ and $H \in (\frac{1}{4},\frac{1}{2})$. Hence,
\begin{equation}
\label{bound-g}
	n! \| \widetilde{g}_{n,R}(\cdot;t,s)\|_{\cP_0^{\otimes n}}^2
	\le  (t-s)^2 t^{2n} C_{H}^n (1+t^2)^{n-1} \dfrac{1}{n!}R,
\end{equation}
where $C_{H}>0$ is a constant that depends on $H$.
Relation \eqref{tight-eq} follows from \eqref{increm-F}, \eqref{bound-g} and Stirling's formula.

\medskip
{\bf Step 2. (finite-dimensional convergence)}
Let $Q_R(t)=R^{-1/2}F_R(t)$. We have to show that for any $m\geq 1$, $0\leq t_1<\ldots<t_m \leq T$,
\[
\big(Q_R(t_1),\ldots, Q_R(t_m)\big) \stackrel{d}{\to}
\big(\cG_1(t_1),\ldots,\cG_m(t_m)\big)
\]
Using the same argument as in the proof of Theorem 1.3.(iii) (Step 2) of \cite{BY}, it is enough to prove that for any $i,j=1,\ldots,m$,
\[
{\rm Var}\Big(\langle DF_R(t_i),-DL^{-1} F_R(t_j)\rangle_{\cP_0} \Big)\leq CR.
\]

To estimate this variance, we use Proposition B.1 of \cite{BY2022}. It remains to show that
\begin{align*}
& \int_{[-R,R]^4} \int_{\bR^6} \left\| D_z u_{\theta}(t_j,x_1)- D_{z+z'} u_{\theta}(t_j,x_1) \right\|_4 \left\| D_w u_{\theta}(t_j,x_2) - D_{w+w'} u_{\theta}(t_j,x_2) \right\|_4 \nonumber \\
	& \left\| D_{z,y}^2 u_{\theta}(t_i,x_3)- D_{z,y+y'}^2 u_{\theta}(t_i,x_3)- D_{z+z',y}^2 u_{\theta}(t_i,x_3)+ D_{z+z',y+y'}^2 u_{\theta}(t_j,x_3) \right\|_4 \nonumber \\
	& \left\| D_{w,y}^2 u_{\theta}(t_i,x_4)- D_{w,y+y'}^2 u_{\theta}(t_i,x_4)- D_{w+w',y}^2 u_{\theta}(t_i,x_4)+ D_{w+w',y+y'}^2 u_{\theta}(t_i,x_4) \right\|_4 \nonumber \\
	& \qquad \qquad \qquad |y'|^{2H-2}|z'|^{2H-2}|w'|^{2H-2} dydy' dzdz' dwdw' dx_1 dx_2 dx_3 dx_4 \leq CR.
\end{align*}
This can be proved using the same argument as for $\cA^* \leq CR $ in Section \ref{sec:QCLT}. We omit the details.

\appendix

\section{Moment comparison using hypercontractivity}
\label{section-hyper}

In this section, we provide a moment comparion result for the solution of a general SPDE, in the spirit of Theorem B.1 of \cite{BCC}.

\medskip

Let $W=\{W(\varphi);\varphi \in \cH\}$ be an isonormal Gaussian process,
associated to a Hilbert space $\cH$. Assume that either one of the following conditions hold:\\
(i) $\cH$ consists  of functions (or distributions) on $\bR_{+} \times \bR^d$,
i.e. $W$ is {\em time-dependent}; and\\
(ii)$\cH$ consists of functions (or distributions) on $\bR^d$, i.e. $W$ is {\em time-independent}.\\

Let $\cal L$ be a second-order pseudo-differential operator of $\bR_{+} \times \bR^d$ and $u_{\theta}$ be the solution of the SPDE:
\begin{equation}
\label{eq-SDE-hyper}
	{\cal L}u(t,x)=\sqrt{\theta}u(t,x) \dot{W}, \quad t>0,x\in \bR^d
\end{equation}
with (deterministic) initial condition.
By definition, the {\em (mild Skorohod) solution} to \eqref{eq-SDE-hyper} is an adapted square-integrable process $u_{\theta}=\{u_{\theta}(t,x);t>0,x\in \bR^d\}$ which satisfies
\begin{align*}
	u_{\theta}(t,x)=w(t,x)+\sqrt{\theta}\int_0^t \int_{\bR^d}G(t-s,x-y)u_{\theta}(s,y)W(\delta s,\delta y),
\end{align*}
if the noise $W$ is time-dependent, respectively
\begin{align*}
	u_{\theta}(t,x)=w(t,x)+\sqrt{\theta}\int_0^t \int_{\bR^d}G(t-s,x-y)u_{\theta}(s,y)W(\delta y)ds,
\end{align*}
if the noise $W$ is time-independent, provided that these integrals are well-defined.
Here $W(\delta s,\delta y)$ (respectively $W(\delta y)$) denotes the Skorohod integral with respect to $W$,
$G$ is the fundamental solution of $\cal L$ on $\bR_{+} \times \bR^d$,
and $w$ is the solution of the deterministic equation ${\cal L}u=0$ on
$\bR_{+} \times \bR^d$, with the same initial condition as \eqref{eq-SDE-hyper}.

\begin{lemma}
\label{Lemma-hypercontractivity}
Suppose that for any $\theta>0$, equation \eqref{eq-SDE-hyper} has a unique solution $U_{\theta}$ and $\bE|U_{\theta}(t,x)|^p<\infty$ for any $t>0$, $x\in \bR^d$ and $p>1$.

(a) For any $\theta>0$, $q> p>1$, $t>0$, $x \in \bR^d$ and $h \in \bR^d$, we have:
	\begin{align*}
		\left\| U_{\frac{p-1}{q-1}\theta}(t,x+h) - U_{\frac{p-1}{q-1}\theta}(t,x) \right\|_q \le \left\| U_{\theta}(t,x+h) - U_{\theta}(t,x) \right\|_p.
	\end{align*}
In particular, for any $\theta>0$, $p>2$, $t>0$, $x \in \bR^d$ and $h \in \bR^d$, we have:
\begin{align*}
		\left\| U_{\theta}(t,x+h) - U_{\theta}(t,x) \right\|_p \le \left\| U_{(p-1)\theta}(t,x+h) - U_{(p-1)\theta}(t,x) \right\|_2.
	\end{align*}

(b) Assume that for any $\theta>0$, $t>0$ and $x \in \bR^d$, $U_{\theta}(t,x)$ is Malliavin differentiable of order $k$, its Malliavin derivative is a function in $\cH$, and $\bE|D_z U_{\theta}(t,x)|^p<\infty $ for any $z$ and $p>1$. Then, for any $q> p>1$, $t>0$, $x \in \bR^d$, $l\in \bN$, for any real numbers $\{a^{(j)}: 1 \le j \le l\}$ and for any $\{z_i^{(j)}:1 \le i \le k, 1 \le j \le l \}$ (chosen in $\bR_{+}\times \bR^d$ if the noise is time-dependent, or in $\bR^d$ if the noise is time-independent), we have:
	\begin{align*}
		\left\| \sum_{j=1}^l a^{(j)} D^k_{z_1^{(j)},\ldots,z_k^{(j)}} U_{\frac{p-1}{q-1}\theta}(t,x) \right\|_q
		\le \left( \dfrac{p-1}{q-1} \right)^{k/2} \left\| \sum_{j=1}^l a^{(j)} D^k_{z_1^{(j)},\ldots,z_k^{(j)}} U_{\theta}(t,x) \right\|_p.
	\end{align*}
In particular, for any $\theta>0$, $p>2$, $t>0$ and $x \in \bR^d$, we have:
\begin{align*}
		\left\| \sum_{j=1}^l a^{(j)} D^k_{z_1^{(j)},\ldots,z_k^{(j)}} U_{\theta}(t,x) \right\|_p
		\le \left( \dfrac{1}{p-1} \right)^{k/2} \left\| \sum_{j=1}^l a^{(j)} D^k_{z_1^{(j)},\ldots,z_k^{(j)}} U_{(p-1)\theta}(t,x) \right\|_2.
	\end{align*}
\end{lemma}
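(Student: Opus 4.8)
The plan is to exploit the scaling structure of the chaos expansion of $U_\theta$ together with the hypercontractivity property \eqref{OU-hyper} of the Ornstein--Uhlenbeck semigroup. The starting point is the observation that, by Picard iteration of the mild Skorohod equation \eqref{eq-SDE-hyper}, the solution has a chaos expansion of the form $U_\theta(t,x) = \sum_{n\ge 0} \theta^{n/2} I_n(\phi_n(\cdot,x;t))$ in which the kernels $\phi_n$ do \emph{not} depend on $\theta$: each iteration of the integral equation contributes exactly one factor of $\sqrt{\theta}$. Since $J_n(U_\theta(t,x)) = \theta^{n/2} I_n(\phi_n(\cdot,x;t))$, applying the OU semigroup yields the key identity
\[
T_{t_0}\big(U_\theta(t,x)\big) = \sum_{n\ge 0} e^{-n t_0}\,\theta^{n/2} I_n(\phi_n(\cdot,x;t)) = U_{e^{-2t_0}\theta}(t,x),
\]
valid for every $t_0 \ge 0$. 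Because $T_{t_0}$ is linear and acts as the identity on the zeroth chaos (where the $\theta$-independent deterministic term $w$ lives), the same identity holds verbatim for the spatial increment $U_\theta(t,x+h) - U_\theta(t,x)$.

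For part (a), I would choose $t_0 > 0$ so that $e^{2t_0} = (q-1)/(p-1)$, which is admissible precisely because $q > p$. With this choice the exponent produced by \eqref{OU-hyper} is $q(t_0) = e^{2t_0}(p-1) + 1 = q$, and simultaneously $e^{-2t_0}\theta = \frac{p-1}{q-1}\theta$. Hence, applying the scaling identity to the increment and then \eqref{OU-hyper},
\[
\big\| U_{\frac{p-1}{q-1}\theta}(t,x+h) - U_{\frac{p-1}{q-1}\theta}(t,x) \big\|_q = \big\| T_{t_0}\big(U_\theta(t,x+h)-U_\theta(t,x)\big) \big\|_q \le \big\| U_\theta(t,x+h)-U_\theta(t,x) \big\|_p,
\]
which is the claimed inequality. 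The ``in particular'' statement follows by specialising the target exponent to $q=p$ and the source exponent to $2$, and then replacing $\theta$ by $(p-1)\theta$.

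For part (b), the only new ingredient is the commutation relation \eqref{Prop-OU}, which I would apply pointwise in the derivative variables: differentiating the scaling identity $k$ times gives, for fixed $z_1,\ldots,z_k$,
\[
D^k_{z_1,\ldots,z_k} U_{e^{-2t_0}\theta}(t,x) = D^k_{z_1,\ldots,z_k}\big(T_{t_0} U_\theta(t,x)\big) = e^{-k t_0}\, T_{t_0}\big(D^k_{z_1,\ldots,z_k} U_\theta(t,x)\big).
\]
Taking the prescribed linear combination (using linearity of both $D^k$ and $T_{t_0}$), applying \eqref{OU-hyper} with the same $t_0$ as above, and using $e^{-k t_0} = (e^{-2t_0})^{k/2} = \big(\tfrac{p-1}{q-1}\big)^{k/2}$, produces exactly the factor $\big(\frac{p-1}{q-1}\big)^{k/2}$ and the bound in part (b). Again the special case is obtained by setting $q=p$, the source exponent to $2$, and rescaling $\theta \mapsto (p-1)\theta$.

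The main obstacle is the first step: rigorously justifying the scaling identity $T_{t_0}(U_\theta) = U_{e^{-2t_0}\theta}$ for the general equation \eqref{eq-SDE-hyper}. This requires verifying that the chaos kernels are genuinely independent of $\theta$ and that the expansion converges in $L^2(\Omega)$ (so that $T_{t_0}$ may be applied term by term), together with checking that the pointwise Malliavin derivatives $D^k_z U_\theta(t,x)$ are well-defined $L^p$ random variables for which \eqref{Prop-OU} holds after evaluation at fixed points. The finiteness-of-moments hypotheses in the statement are exactly what make these manipulations legitimate.
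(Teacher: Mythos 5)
Your proposal is correct and follows essentially the same route as the paper: the paper's proof rests on the identity $T_{\tau}(U_{\theta}(t,x)) = U_{e^{-2\tau}\theta}(t,x)$ (cited from the proof of Theorem B.1 of \cite{BCC}, where it is obtained exactly via the $\theta$-scaling of the chaos expansion you describe), followed by linearity of $T_{\tau}$, the hypercontractivity bound \eqref{OU-hyper} with $e^{2\tau}=(q-1)/(p-1)$, and the commutation relation \eqref{Prop-OU} for part (b). The only difference is that you sketch the derivation of the scaling identity rather than citing it, which is a presentational rather than mathematical divergence.
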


\begin{proof}
From the proof of Theorem B.1 of \cite{BCC}, for any $\tau>0$, $t>0$ and $x \in \bR^d$, we have:
\begin{align}
\label{eq-T}
	T_{\tau} (U_{\theta}(t,x)) = U_{e^{-2\tau}\theta}(t,x) \quad \mbox{a.s.}
\end{align}
where $(T_{t})_{t \ge 0}$ is the Ornstein-Uhlenbeck semigroup (see relation (B.6) of \cite{BCC}).

(a) Using \eqref{eq-T} and the fact that $T_{\tau}$ is a linear operator, we have:
\begin{align*}
	T_{\tau} (U_{\theta}(t,x+h) - U_{\theta}(t,x)) = U_{e^{-2\tau}\theta}(t,x+h) - U_{e^{-2\tau}\theta}(t,x).
\end{align*}
Hence, using the hypercontractivity property \eqref{OU-hyper} of the OU semigroup, we obtain:
\begin{align*}
	\left\| U_{e^{-2\tau}\theta}(t,x+h) - U_{e^{-2\tau}\theta}(t,x) \right\|_{q(\tau)}
	=& \left\| T_{\tau} (U_{\theta}(t,x+h) - U_{\theta}(t,x)) \right\|_{q(\tau)} \\
	\le& \left\| U_{\theta}(t,x+h) - U_{\theta}(t,x) \right\|_p,
\end{align*}
where $q(\tau) = e^{2\tau} (p-1)+1$. We choose $\tau$ such that $q(\tau) = q$, which means that $e^{2\tau} = \frac{q-1}{p-1}$.

(b) By \eqref{eq-T} and \ref{Prop-OU},
\begin{align*}
	D^k_{z_1^{(j)},\ldots,z_k^{(j)}} U_{e^{-2\tau}\theta}(t,x)
	= D^k_{z_1^{(j)},\ldots,z_k^{(j)}} \left( T_{\tau} (U_{\theta}(t,x)) \right)
	= e^{-k\tau} T_{\tau} (D^k_{z_1^{(j)},\ldots,z_k^{(j)}} U_{\theta}(t,x)).
\end{align*}
Since $T_{\tau}$ is a linear operator,
\begin{align*}
	\sum_{j=1}^l a^{(j)} D^k_{z_1^{(j)},\ldots,z_k^{(j)}} U_{e^{-2\tau}\theta}(t,x)
	= e^{-k\tau} T_{\tau} \left( \sum_{j=1}^l a^{(j)} D^k_{z_1^{(j)},\ldots,z_k^{(j)}} U_{\theta}(t,x) \right).
\end{align*}
Using the hypercontractivity property \eqref{OU-hyper} of the OU semigroup, we obtain:
\begin{align*}
	\left\| \sum_{j=1}^l a^{(j)} D^k_{z_1^{(j)},\ldots,z_k^{(j)}} U_{e^{-2\tau}\theta}(t,x) \right\|_{q(\tau)}
	=& e^{-k\tau} \left\| T_{\tau} \left( \sum_{j=1}^l a^{(j)} D^k_{z_1^{(j)},\ldots,z_k^{(j)}} U_{\theta}(t,x) \right) \right\|_{q(\tau)} \\
	\le& e^{-k\tau} \left\| \sum_{j=1}^l a^{(j)} D^k_{z_1^{(j)},\ldots,z_k^{(j)}} U_{\theta}(t,x) \right\|_p,
\end{align*}
where $q(\tau) = e^{2\tau} (p-1)+1$. We choose $\tau$ such that $q(\tau) = q$, which means that $e^{2\tau} = \frac{q-1}{p-1}$.
\end{proof}

\begin{remark}
{\rm Using the same argument as above, one can prove the following extension of Lemma \ref{Lemma-hypercontractivity}. Consider a family $\mathcal L^{(1)}, \ldots, \mathcal L^{(l)}$ of operators, and let $U_{\theta}^{(j)}$ be the unique solution of equation \eqref{eq-SDE-hyper} with operator $\mathcal L$ replaced by $\mathcal L^{(j)}$, and some initial data $w^{(j)}$. Then
\begin{align*}
		\left\|  \sum_{j=1}^l a_j U^{(j)}_{\frac{p-1}{q-1}\theta_j}(t,x_j) \right\|_q \le \left\| \sum_{j=1}^l a_j U^{(j)}_{\theta_j}(t,x_j)) \right\|_p,
	\end{align*}
\begin{align*}
		\left\| \sum_{j=1}^l a^{(j)} D^k_{z_1^{(j)},\ldots,z_k^{(j)}} U^{(j)}_{\frac{p-1}{q-1}\theta}(t,x_j) \right\|_q
		\le \left( \dfrac{p-1}{q-1} \right)^{k/2} \left\| \sum_{j=1}^l a^{(j)} D^k_{z_1^{(j)},\ldots,z_k^{(j)}} U^{(j)}_{\theta}(t,x_j) \right\|_p.
	\end{align*}
}
\end{remark}

\section{Stochastic Volterra equations with white noise in time}
\label{section-appA}

In this section, we study two models involving parametric families of stochastic Volterra equations driven by the Gaussian noise $\fX$ with covariance \eqref{cov-fX}, with $H \in (\frac{1}{4},\frac{1}{2})$. This study will allow us to develop some properties of the solution $V_{\theta}$ of the (hAm) model \eqref{eq-V} with noise $\fX$ and delta initial velocity,
which are needed in the sequel.

\medskip

The stochastic heat and wave equations with noise $\fX$ and affine function $\sigma(u)=au+b$ multiplying the noise
were studied in \cite{BJQ15}. Several facts from \cite{BJQ15} will be needed here. For instance, from Theorem 2.9 of \cite{BJQ15} and Minkowski's inequality, we have: for any $p\geq 2$,
\begin{equation}
\label{BDG}
\left\|\int_0^T \int_{\bR}S(t,x)\fX(dt,dx)\right\|_p^2 \leq C_{p,H} \int_0^T \int_{\bR^2} \|S(t,x)-S(t,y)\|_p^2 |x-y|^{2H-2}dxdydt,
\end{equation}
where $C_{p,H}>0$ is a constant which depends on $p$ and $H$.

Let $T>0$ be arbitrary. For each $t\in [0,T]$, let $\G_t$ be a deterministic non-negative function on $\bR$. Consider the following functions:
\begin{align*}
J_1(t)&:= \int_{\bR^2}|\G_t(x)-\G_t(x+h)|^2 |h|^{2H-2}dxdh\\
J_2(t)&:= \int_0^t J_3(s)J_4(t-s)ds \\
J_3(t)&:=\int_{\bR^3}|\big(\G_{t}(x+h)-\G_t(x)\big)-
\big(\G_{t}(x+h+k)-\G_t(x+k)\big)|^2 |h|^{2H-2}
|k|^{2H-2}dhdkdx \\
J_4(t)&:=\int_{\bR}\G_t^2(x)dx.
\end{align*}

We impose the following assumption:
\medskip

{\bf Assumption A.} $J_i(t)<\infty$ for any $t\in [0,T]$ and  $i=1,2,3,4$, and
\[
Q_i(T):=\int_0^T J_i(t)dt<\infty \quad \mbox{for $i=1,2$}.
\]

\medskip

Assumption A holds when $\G_t(x)=G_t(x)$ is the fundamental solution of the wave equation (given by \eqref{def-G}), or when $\G_t(x)=g_t(x):=(2\pi t)^{-1/2} \exp(-\frac{x^2}{2t})$ is the fundamental solution of the heat equation: by Lemmas 3.1 and 3.3 of \cite{BJQ15},
\begin{align*}
J_1(t)&=C \int_{\bR} |\cF \G_t(\xi)|^2 |\xi|^{1-2H}d\xi=
\begin{cases}
		C t^{2H} \quad \mbox{for wave equation} \\
		C t^{H-1} \quad \mbox{for heat equation}
	\end{cases}
\\
J_3(t)&=C \int_{\bR} |\cF \G_t(\xi)|^2 |\xi|^{2(1-2H)}d\xi=
\begin{cases}
		C t^{4H-1} \quad \mbox{for wave equation} \\
		C t^{2H-3/2} \quad \mbox{for heat equation}
	\end{cases}
\end{align*}
\[
J_4(t)=
\begin{cases}
		C t \quad \mbox{for wave equation} \\
		C t^{-1/2} \quad \mbox{for heat equation}
	\end{cases}
\quad \mbox{and} \quad
J_2(t)=
\begin{cases}
		C t^{4H+1} \quad \mbox{for wave equation} \\
		C t^{2H-1} \quad \mbox{for heat equation}
	\end{cases}
\]
Here $C$ is a constant that depends on $H$ and may be different in each of its appearances.

\medskip

Let $p\geq 2$ be arbitrary. For any $r \in [0,T]$, let $\cX_r$ be the space of predictable processes $\{X(t,x);t \in [r,T],x\in \bR\}$ such that $\|X\|_{\cX_r}:=\|X\|_{\cX_{r,1}}+\|X\|_{\cX_{r,2}}<\infty$, where
\begin{align*}
\|X\|_{\cX_{r,1}}&=\sup_{(t,x)\in [r,T] \times \bR}\|X(t,x)\|_p\\
\|X\|_{\cX_{r,2}}&=\sup_{(t,x)\in [r,T] \times \bR} \left(
\int_r^t \int_{\bR^d}\G_{t-s}^2(x-y)\|X(s,y)-X(s,y+h)\|_{p}^2 |h|^{2H-2} dh dy ds \right)^{1/2}.
\end{align*}
\begin{theorem}
\label{gen-th}
Suppose Assumption A holds.
Let $\Lambda$ be an arbitrary set.

a) For any $r \in [0,T]$ and $\alpha \in \Lambda$, the family of stochastic Volterra equations:
\begin{equation}
\label{gen-eq}
X(t,x)=X_0^{(r,\alpha)}(t,x)+\int_r^t \int_{\bR}\G_{t-s}(x-y)X(s,y)\fX(ds,dy),
\end{equation}
with $t \in [r,T]$ and $x\in \bR$, has a unique solution $X^{(r,\alpha)}$ in $\cX_r$, provided that $X_0^{(r,\alpha)}\in \cX_r$.

b) If in addition,
\begin{equation}
\label{cond-X0}
\sup_{r \in [0,T]} \sup_{\alpha \in \Lambda} \|X_0^{(r,\alpha)}\|_{\cX_r}<\infty,
\end{equation}
then
\begin{equation}
\label{X-ra}
\sup_{r \in [0,T]} \sup_{\alpha \in \Lambda} \|X^{(r,\alpha)}\|_{\cX_r} <\infty.
\end{equation}
\end{theorem}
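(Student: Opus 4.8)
The plan is to treat \eqref{gen-eq} as a fixed-point problem in the Banach space $\cX_r$. Writing $\mathcal L X(t,x):=\int_r^t\int_{\bR}\G_{t-s}(x-y)X(s,y)\fX(ds,dy)$, equation \eqref{gen-eq} becomes $X=X_0^{(r,\alpha)}+\mathcal L X$, an affine equation whose linear part is $\mathcal L$. The heart of the matter is to show that $\mathcal L$ is a bounded linear operator on $\cX_r$ and to produce an estimate from which a contraction can be extracted; existence and uniqueness (part a) then follow from the Banach fixed-point theorem, while the resulting a priori bound $\|X^{(r,\alpha)}\|_{\cX_r}\le C\,\|X_0^{(r,\alpha)}\|_{\cX_r}$ is what drives part b). Because the bounds below carry a genuine time factor (the integrals of $J_1$ and $J_2$ over $[r,t]$), $\mathcal L$ is a strict contraction on a short enough interval $[r,r+\tau]$; one then obtains the solution on all of $[r,T]$ by iterating this construction over finitely many subintervals (equivalently, by passing to an exponentially weighted version of $\|\cdot\|_{\cX_r}$).

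The core estimates bound the two components $\|\mathcal L X\|_{\cX_{r,1}}$ and $\|\mathcal L X\|_{\cX_{r,2}}$ using the BDG-type inequality \eqref{BDG}. For the first component I would apply \eqref{BDG} to $\mathcal L X(t,x)$ and split the spatial increment of the integrand as
\begin{align*}
\G_{t-s}(x-y)X(s,y)-\G_{t-s}(x-y-h)X(s,y+h)
&=\G_{t-s}(x-y)\big(X(s,y)-X(s,y+h)\big)\\
&\quad+\big(\G_{t-s}(x-y)-\G_{t-s}(x-y-h)\big)X(s,y+h).
\end{align*}
After integration against $|h|^{2H-2}$, the first term is dominated by $\|X\|_{\cX_{r,2}}^2$, and the second, upon bounding $\|X(s,y+h)\|_p\le\|X\|_{\cX_{r,1}}$, by $\|X\|_{\cX_{r,1}}^2\int_r^t J_1(t-s)\,ds\le Q_1(T)\,\|X\|_{\cX_{r,1}}^2$; both are finite by Assumption~A.

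For the second component I would estimate the spatial increment $\mathcal L X(s,y)-\mathcal L X(s,y+h)$ via a second application of \eqref{BDG}, which now involves the rectangular increment $\Psi(z)X(u,z)-\Psi(z+k)X(u,z+k)$ with $\Psi(z):=\G_{s-u}(y-z)-\G_{s-u}(y+h-z)$. Splitting this as $\Psi(z)\big(X(u,z)-X(u,z+k)\big)+\big(\Psi(z)-\Psi(z+k)\big)X(u,z+k)$ produces, after the changes of variables and Fubini reorganizations that localize the spatial integrals through the factors $\G_{t-s}^2$ and $|\Psi|^2$, two contributions: one controlled by $\|X\|_{\cX_{r,2}}^2$, and a second-increment term of $\G$ controlled, via $J_3$ together with the factor $J_4$ coming from $\int_{\bR}\G_{t-s}^2$, by the time convolution $J_2$ and hence by $Q_2(T)\,\|X\|_{\cX_{r,1}}^2$. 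Assumption~A again guarantees that all these constants are finite.

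I expect the delicate point to be precisely this $\cX_{r,2}$ estimate: the double application of \eqref{BDG} generates a nested multiple integral in which a rectangular increment of $\G$ is coupled to an increment of $X$, and the work lies in reorganizing it — using the localizing weights $\G_{t-s}^2$ and the identities for the deterministic integrals $J_1,J_2,J_3,J_4$ — so that the $X$-dependent factors close up again inside the norms $\|X\|_{\cX_{r,1}}$ and $\|X\|_{\cX_{r,2}}$ rather than in some uncontrolled quantity. Once the two contraction estimates are in place, part a) follows from the Banach fixed-point theorem. For part b), the crucial observation is that every constant produced above depends only on $p$, $H$, $T$ and on the kernel $\G$ (through $Q_1(T)$ and $Q_2(T)$), and in no way on $r$ or $\alpha$; therefore the bound $\|X^{(r,\alpha)}\|_{\cX_r}\le C(p,H,T)\,\|X_0^{(r,\alpha)}\|_{\cX_r}$ is uniform, and taking the supremum over $r\in[0,T]$ and $\alpha\in\Lambda$ while invoking \eqref{cond-X0} yields \eqref{X-ra}.
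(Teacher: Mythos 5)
Your core estimates are the right ones and essentially coincide with what the paper does: the same splitting of the spatial increment of $\G_{t-s}(x-\cdot)X(s,\cdot)$ into an increment of $X$ weighted by $\G_{t-s}^2$ (giving the $\cX_{r,2}$ contribution) and an increment of $\G$ multiplying $\|X\|_p^2$ (giving $J_1$), and the same pairing of $J_3$ with $J_4$ to produce the time convolution $J_2$ in the second component. The gap is the step where you claim that $\mathcal{L}$ becomes a strict contraction on a short interval $[r,r+\tau]$, or equivalently after an exponential reweighting in time. Writing out the two estimates you describe, one gets, on $[r,r+\tau]$,
\begin{align*}
\|\mathcal{L} X\|_{\cX_{r,1}}^2 &\le 2C_{p,H}\big(Q_1(\tau)\,\|X\|_{\cX_{r,1}}^2+\|X\|_{\cX_{r,2}}^2\big),\\
\|\mathcal{L} X\|_{\cX_{r,2}}^2 &\le 2C_{p,H}\big(Q_2(\tau)\,\|X\|_{\cX_{r,1}}^2+Q_1(\tau)\,\|X\|_{\cX_{r,2}}^2\big).
\end{align*}
The first line is the problem: the term $\int_r^t\int\G_{t-s}^2(x-y)\|X(s,y)-X(s,y+h)\|_p^2|h|^{2H-2}\,dh\,dy\,ds$ is \emph{exactly} the quantity whose supremum over $(t,x)$ defines $\|X\|_{\cX_{r,2}}^2$, so it enters with the fixed constant $2C_{p,H}$, which does not shrink as $\tau\to 0$ and is not damped by a weight $e^{-\beta t}$ either (the weight cancels between the two sides). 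Since $C_{p,H}$ need not be small, $\mathcal{L}$ is not a contraction for the norm $\|\cdot\|_{\cX_{r,1}}+\|\cdot\|_{\cX_{r,2}}$ on any interval, however short. Note also that you cannot absorb this term into $\|X\|_{\cX_{r,1}}$, because $|h|^{2H-2}$ is not integrable near $h=0$ when $H<1/2$, so the increment of $X$ is genuinely needed there.

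The argument is repairable, but the repair has to be stated: the $2\times2$ comparison matrix above has spectral radius of order $Q_1(\tau)+\sqrt{Q_2(\tau)}\to 0$, so either $\mathcal{L}^2$ is a contraction for $\tau$ small, or $\mathcal{L}$ itself is a contraction for the rescaled norm $\|\cdot\|_{\cX_{r,1}}+\lambda\|\cdot\|_{\cX_{r,2}}$ with $\lambda$ large depending on $C_{p,H}$. This is precisely the point where the paper departs from a plain fixed-point/Gronwall scheme: substituting the recursion for $W_n$ into the one for $V_{n+1}$ yields the two-step inequality $f_{n+1}(t)\le C\int_r^t\big(f_n(s)+f_{n-1}(s)\big)\,ds$ for the Picard differences, which is closed by the extended Gronwall lemma (Lemma 3.10 of \cite{BJQ15}) rather than by a contraction. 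Your part b) is fine once a) is fixed, since all constants indeed depend only on $p$, $H$, $T$ and the kernel, uniformly in $r$ and $\alpha$.
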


\begin{proof} a) For any $n\geq 0$, we define the Picard iterations:
\begin{equation}
\label{picard}
X_{n+1}(t,x)=X_0^{(r,\alpha)}(t,x)+\int_r^t \int_{\bR} \G_{t-s}(x-y)X_n(s,y)\fX(ds,dy),
\end{equation}
for all $t \in [r,T]$ and $x\in \bR$. Note that $X_n=X_n^{(r,\alpha)}$ depends on $(r,\alpha)$. To simplify the writing, we omit writing the upper index $(r,\alpha)$ in the first part of the argument.

The recurrence relation \eqref{picard} also holds for $n=-1$,
letting $X_{-1}(t,x)=0$. For any $n\geq 0$ and $t \in [r,T]$, we define
\begin{align*}
V_n(t)&=\sup_{x \in \bR}\|X_{n}(t,x)-X_{n-1}(t,x)\|_p^2 \\
W_n(t)&=\sup_{x \in \bR}
\int_r^t \int_{\bR^d}\G_{t-s}^2(x-y) \\
& \|
\big(X_{n}(s,y)-X_{n-1}(s,y)\big)-
\big(X_{n}(s,y+h)-X_{n-1}(s,y+h)\big)\|_p^2 |h|^{2H-2} dhdyds.
\end{align*}
As in the proof of Theorem 3.8 of \cite{BJQ15}, for any $t \in [r,T]$ and $n\geq 0$, we have:
\begin{align*}
V_{n+1}(t) & \leq 2 C_{p,H} \left( \int_r^t V_n(s) J_1(t-s)ds+ W_n(t)\right)\\
W_{n+1}(t) &\leq 2 C_{p,H} \left( \int_r^t V_n(s) J_2(t-s)ds+ \int_r^t W_n(s) J_1(t-s)ds\right),
\end{align*}
where $C_{p,H}$ is the constant from \eqref{BDG}. Letting $f_n(t)=V_n(t)+W_n(t)$, we have: for $n \geq 1$,
\[
f_{n+1}(t) \leq 2C_{p,H}(C_{p,H}+1) \int_r^t \big( f_n(s)+f_{n-1}(s)\big)ds, \quad \mbox{for all} \ t \in [r,T].
\]
Note that this is precisely the recurrence relation appearing in Lemma 3.10 of \cite{BJQ15}. To apply this lemma, we need to show that $f_0$ and $f_1$ are uniformly bounded. For $n=0$,
\[
\sup_{t \in [r,T]}\sqrt{f_0(t)} \leq \sup_{t \in [r,T]} \sqrt{V_0(t)}+\sup_{t \in [r,T]} \sqrt{W_0(t)}=\|X_0^{(r,\alpha)}\|_{\cX_r}=:M_0
\]
For $n=1$, using the recurrence relations above, we have:
\begin{align*}
V_{1}(t) & \leq 2C_{p,H} \left( \int_r^t V_0(s) J_1(t-s)ds+W_0(t)\right)\leq 2 C_{p,H} \|X_0^{(r,\alpha)}\|_{\cX_r}^2 \big( Q_1(t-r)+1 \big)\\
W_1(t) & \leq 2C_{p,H} \left(  \int_r^t V_0(s) J_2(t-s)ds +  \int_r^t W_0(s) J_1(t-s)ds\right)\\
& \leq 2C_{p,H} \|X_0^{(r,\alpha)}\|_{\cX_r}^2 \big( Q_1(t-r)+Q_2(t-r) \big),
\end{align*}
and hence
\[
\sup_{t \in [r,T]} f_1(t) \leq 2C_{p,H} \|X_0^{(r,\alpha)}\|_{\cX_r}^2
\big( 2Q_1(T-r)+Q_2(T-r) +1\big)=:M_1.
\]
Let $M=M_0+M_1$. Applying now Lemma 3.10 of \cite{BJQ15}, we infer that there exists a sequence $(a_n)_{n\geq 0}$ of positive numbers with the property $\sum_{n\geq 0}a_n^{1/p}<\infty$ for any $p>1$, such that
\[
\sup_{t \in [r,T]} f_n(t) \leq M a_n \quad \mbox{for all} \quad n\geq 0.
\]
It follows that
$
\|X_n-X_{n-1}\|_{\cX_r} =\sup_{t \in [r,T]}\sqrt{V_n(t)}+\sup_{t \in [r,T]}\sqrt{W_n(t)}\leq 2 \sqrt{Ma_n}$.
Hence $(X_n)_{n\geq 0}$ is a Cauchy sequence in $\cX_r$. Its limit $X$ is the unique solution of \eqref{gen-eq} in $\cX_r$.

b) We now prove the last statement regarding the uniform bound in $(r,\alpha)$. We include the upper indices $(r,\alpha)$ in this part. We have:
\[
\|X_n^{(r,\alpha)}\|_{\cX_r} \leq \|X_0^{(r,\alpha)}\|_{\cX_r}+ \sum_{k=1}^{n}\|X_k^{(r,\alpha)}-X_{k-1}^{(r,\alpha)}\|_{\cX_r} \leq \|X_0^{(r,\alpha)}\|_{\cX_r} +2 (M^{(r,\alpha)})^{1/2} \sum_{n\geq 1}a_n^{1/2}.
\]
Relation \eqref{X-ra} follows letting $n \to \infty$, using the fact that $M^{(r,\alpha)}$ is uniformly bounded for all $r \in [0,T]$ and $\alpha \in \Lambda$, due to condition \eqref{cond-X0}.
\end{proof}

\begin{example}
\label{ex-v}
{\rm As an application of Theorem \ref{gen-th}, we consider the (hAm) model \eqref{eq-V} with noise $\fX$ and Dirac delta initial velocity. For any $r \in [0,T]$ and $z \in \bR$ fixed, the solution $V_{\theta}^{(r,z)}$ satisfies the integral equation \eqref{eq-V-int}. This is precisely the stochastic Volterra equation \eqref{gen-eq} with initial condition $X_0^{(r,z)}(t,x)=G_{t-r}(x-z)$ and $\G_t=\sqrt{\theta}G_t$. In this application of Theorem \ref{gen-th}, $\alpha=z$ and $\Lambda=\bR$. Condition \eqref{cond-X0} clearly holds since:
\begin{align*}
\|X_0^{(r,z)}\|_{\cX_{r,1}}^2&=\sup_{(t,x)\in [r,T] \times \bR}G_{t-r}^2(x-z) \leq \frac{1}{4} \\
\|X_0^{(r,z)}\|_{\cX_{r,2}}^2 &=\theta \sup_{(t,x)\in [r,T] \times \bR}
\int_r^t \int_{\bR^2} G_{t-s}^2(x-y) |G_{s-r}(y-z)-G_{s-r}(y+h-z)|^2 |h|^{2H-2}dhdy ds \\
& \leq \frac{1}{4}\theta C_{H,1}\sup_{t \in [r,T]}\int_r^t (s-r)^{2H}ds=\theta C_{H,1}'(T-r)^{2H+1}.
\end{align*}
We conclude that
\begin{equation}
\label{vrz-bound}
\sup_{r \in [0,T]} \sup_{z \in \bR}\|V_{\theta}^{(r,z)}\|_{\cX_r}<\infty.
\end{equation}
}
\end{example}

\begin{remark}
{\rm Theorem \ref{gen-th} cannot be applied to the (pAm) model with Dirac delta initial condition, since $\sup_{t \in [0,T]} \sup_{x\in\bR}g_t(x)=\infty$, where $g_t(x)=(2\pi t)^{-1/2}e^{-x^2/t}$ is the heat kernel. For any $x\not=0$, the function $t \mapsto g_t(x)$ attains its maximum at $t=x^2$, and if $|x|\leq T$, this maximum value is $c|x|^{-1}$ for some constant $c>0$.}
\end{remark}

\medskip

We study now a second parametric family of stochastic Volterra equations, which will be useful for treating the increments of $V_{\theta}$. We introduce the following assumption.

\medskip

{\bf Assumption B.} For any $i=1,3,4$, $J_i(t)<\infty$ for any $t\in [0,T]$ and
\[
Q_i(T):=\int_0^T J_i(t)dt<\infty.
\]

\medskip

Let $p\geq 2$ be arbitrary. For any $r \in [0,T]$, let $\cY_r$ be the set of all processes $\{Y(t,x,z,h);t \in [r,T], (x,z,h) \in \bR^3\}$ such that the map $(\omega,t,x,z,h) \mapsto Y(\omega,t,x,z,h)$ is $\cP \times \cB(\bR^2)$-measurable and
$\|Y\|_{\cY_r}:=\|Y\|_{\cY_{r,1}}+\|Y\|_{\cY_{r,2}}<\infty$, where
\begin{align*}
\|Y\|_{\cY_{r,1}}&=\sup_{t \in [r,T]} \sup_{z\in \bR} \left(\int_{\bR^2}\|Y(t,x,z,h)\|_p^2 |h|^{2H-2}dxdh \right)^{1/2}\\
\|Y\|_{\cY_{r,2}}&=\sup_{t\in [r,T]} \sup_{z\in \bR} \left(\int_{\bR^3}\|Y(t,x,z,h)-Y(t,x+k,z,h)\|_p^2 |h|^{2H-2}
|k|^{2H-2}dxdh dk \right)^{1/2}.
\end{align*}
Here $\cP$ is the predictable $\sigma$ field on $\Omega \times \bR_{+}\times \bR$.

\begin{theorem}
\label{theoremF}
Suppose Assumption B holds.

a) For any $r \in [0,T]$, the family of stochastic Volterra equations:
\begin{equation}
\label{eq-Xr}
X(t,x,z,h)=X_0^{(r)}(t,x,z,h)+\int_r^t \int_{\bR}\G_{t-s}(x-y)X(s,y,z,h)\fX(ds,dy),
\end{equation}
with $t \in [r,T]$ and $(x,z,h) \in \bR^3$, has a unique solution $X^{(r)}$ in $\cY_r$, provided that $X_0^{(r)} \in \cY_r$.

 b) If in addition,
\[
M:=\sup_{r \in [0,T]}\|X_0^{(r)}\|_{\cY_{r}}<\infty,
\]
then
\begin{equation}
\label{M-sup-r}
\sup_{r \in [0,T]}\|X^{(r)}\|_{\cY_{r}}\leq C_{T,H,p}M,
\end{equation}
where $C_{T,H,p}>0$ is a constant that depends on $(T,H,p)$.
\end{theorem}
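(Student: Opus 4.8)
The plan is to solve \eqref{eq-Xr} by Picard iteration in the Banach space $\cY_r$, following the scheme of Theorem \ref{gen-th}, but now tracking the two pieces of the $\cY_r$-norm separately, since the second one measures a spatial increment in the $x$-variable. I set $X_{-1}\equiv 0$ and define
\[
X_{n+1}(t,x,z,h)=X_0^{(r)}(t,x,z,h)+\int_r^t\int_{\bR}\G_{t-s}(x-y)X_n(s,y,z,h)\,\fX(ds,dy),
\]
which remain $\cP\times\cB(\bR^2)$-measurable because the stochastic integral preserves predictability. Writing $D_n=X_n-X_{n-1}$ (so $D_0=X_0^{(r)}$, and for $n\ge0$ the process $D_{n+1}$ solves the homogeneous version of \eqref{eq-Xr} with $D_n$ in the integrand), I introduce
\begin{align*}
A_n(t)&=\sup_{z\in\bR}\int_{\bR^2}\|D_n(t,x,z,h)\|_p^2\,|h|^{2H-2}\,dx\,dh,\\
B_n(t)&=\sup_{z\in\bR}\int_{\bR^3}\|D_n(t,x,z,h)-D_n(t,x+k,z,h)\|_p^2\,|h|^{2H-2}|k|^{2H-2}\,dx\,dh\,dk,
\end{align*}
so that $\sup_{t\in[r,T]}A_n(t)=\|D_n\|_{\cY_{r,1}}^2$ and $\sup_{t\in[r,T]}B_n(t)=\|D_n\|_{\cY_{r,2}}^2$.

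The core of the argument is a closed recursion for $(A_n,B_n)$. I would apply \eqref{BDG} pointwise in $(x,z,h)$ to $D_{n+1}$, then split the spatial increment of the integrand (writing $D_n(s,y)$ for $D_n(s,y,z,h)$) as
\begin{align*}
& \G_{t-s}(x-y)D_n(s,y)-\G_{t-s}(x-y')D_n(s,y') \\
& \quad =\G_{t-s}(x-y)\big(D_n(s,y)-D_n(s,y')\big)+\big(\G_{t-s}(x-y)-\G_{t-s}(x-y')\big)D_n(s,y'),
\end{align*}
and integrate against the weights dictated by each norm. For $A_{n+1}$ the first piece produces $\int_{\bR}\G_{t-s}^2=J_4(t-s)$ multiplying a $B_n$-type integral, while the second produces $J_1(t-s)$ multiplying an $A_n$-type integral; for $B_{n+1}$ one first-differences $D_{n+1}$ in $x$, applies \eqref{BDG}, and the analogous splitting yields $J_1(t-s)\,B_n$ from the first piece and $J_3(t-s)\,A_n$ from the second. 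Altogether,
\begin{align*}
A_{n+1}(t)&\le 2C_{p,H}\Big(\int_r^t J_1(t-s)A_n(s)\,ds+\int_r^t J_4(t-s)B_n(s)\,ds\Big),\\
B_{n+1}(t)&\le 2C_{p,H}\Big(\int_r^t J_1(t-s)B_n(s)\,ds+\int_r^t J_3(t-s)A_n(s)\,ds\Big),
\end{align*}
so exactly the kernels $J_1,J_3,J_4$ controlled by Assumption B appear, and $J_2$ never enters. Note every term is a genuine time-convolution, so the recursion is one-step (unlike the two-step reduction forced in Theorem \ref{gen-th}).

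With $g_n=A_n+B_n$ these combine into $g_{n+1}(t)\le 2C_{p,H}\int_r^t\kappa(t-s)g_n(s)\,ds$ for $\kappa=J_1+J_3+J_4$, which is integrable on $[0,T]$ by Assumption B. Iterating gives $\sup_{t\le T}g_n(t)\le (2C_{p,H})^n\,\|X_0^{(r)}\|_{\cY_r}^2\,\sup_{t\le T}\int_{r<s_1<\dots<s_n<t}\kappa(t-s_n)\cdots\kappa(s_2-s_1)\,d\mathbf{s}$, and the gamma-function denominators produced by the iterated convolutions of the (power-type) integrable kernel — the same mechanism as in Lemma 3.10 of \cite{BJQ15} — yield numbers $a_n\ge0$ with $\sum_n a_n^{1/2}<\infty$ and $\sup_{t\le T}g_n(t)\le \|X_0^{(r)}\|_{\cY_r}^2\,a_n$. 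Hence $\sum_n\|X_n-X_{n-1}\|_{\cY_r}<\infty$, so $(X_n)$ is Cauchy in $\cY_r$ and its limit $X^{(r)}$ solves \eqref{eq-Xr}; uniqueness follows because the difference of two solutions obeys $g(t)\le 2C_{p,H}\int_r^t\kappa(t-s)g(s)\,ds$, forcing $g\equiv0$ by the fractional Gronwall lemma. This gives part a). For part b), the $a_n$ and the prefactor depend on $r$ only through $Q_1(T),Q_3(T),Q_4(T)$, which are finite uniformly in $r\in[0,T]$; summing the telescoping bound $\|X^{(r)}\|_{\cY_r}\le\|X_0^{(r)}\|_{\cY_r}+\sum_n\|X_n-X_{n-1}\|_{\cY_r}$ and using $\|X_0^{(r)}\|_{\cY_r}\le M$ then yields \eqref{M-sup-r}.

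The step I expect to be the main obstacle is the derivation of the $B_{n+1}$ recursion: there one must integrate the squared \emph{rectangular} (second-order) increment $\big(\G_{t-s}(x-y)-\G_{t-s}(x+k-y)\big)-\big(\G_{t-s}(x-y')-\G_{t-s}(x+k-y')\big)$ against $|y-y'|^{2H-2}|k|^{2H-2}$ over $x,y,k$ and recognize, via the change of variables $u=x-y'$, $w=y-y'$, that this integral equals precisely $J_3(t-s)$. Keeping straight which spatial increment carries which weight, and verifying that only $J_1,J_3,J_4$ (and not $J_2$) survive so that the recursion stays within the scope of Assumption B, is where the real care is needed.
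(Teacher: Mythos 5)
Your proposal is correct and follows essentially the same route as the paper's proof: Picard iteration in $\cY_r$, the same splitting of the increment of $\G_{t-s}(x-y)X(s,y)$ after applying \eqref{BDG}, the same coupled one-step recursion in which $J_1,J_3,J_4$ (but not $J_2$) appear, summation $g_n=A_n+B_n$ with kernel $J_1+J_3+J_4$, and the extension of Gronwall's lemma to produce the summable sequence $(a_n^{1/2})$. The paper invokes Lemma 15 of \cite{dalang99} rather than Lemma 3.10 of \cite{BJQ15} at this step, but the mechanism is the same, and your handling of the uniformity in $r$ for part b) matches the paper's.
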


\begin{proof}
a) We fix $r \in [0,T]$.
To illustrate the main ideas, assume first that a solution $X^{(r)}$ exists. Denote $X^{(r,z,h)}(t,x):=X^{(r)}(t,x,z,h)$.

We use the notation $a \les b$ if $a \leq C b$ and $C>0$ is a constant that depends on $p$ and $H$.
By the BDG inequality \eqref{BDG} and triangular inequality,
\begin{align*}
& \big\|X^{(r,z,h)}(t,x)\big\|_p^2 \les \big|X_0^{(r,z,h)}(t,x)\big|^2+\\
&
\int_r^t \int_{\bR^2} \big\|\G_{t-s}(x-y)X^{(r,z,h)}(s,y)-
\G_{t-s}(x-y-k)X^{(r,z,h)}(s,y+k)\big\|_p^2 \, |k|^{2H-2}dkdyds \\
&\les \big|X_0^{(r,z,h)}(t,x)\big|^2+ \int_r^t \int_{\bR} \big(\G_{t-s}(x-y) -\G_{t-s}(x-y-k)\big)^2 \big\|X^{(r,z,h)}(s,y)\big\|_p^2 \, |k|^{2H-2}dkdyds\\
& \qquad \qquad \qquad \quad +\int_r^t \int_{\bR} \G_{t-s}^2(x-y-k) \big\|X^{(r,z,h)}(s,y)-X^{(r,z,h)}(s,y+k)\big\|_p^2 \, |k|^{2H-2}dkdyds.
\end{align*}
We multiply by $|h|^{2H-2}$ and we integrate $dx dh$ on $\bR^2$.
For the second term, we use
\[
\int_{\bR^2}\big(\G_{t-s}(x-y)-\G_{t-s}(x-y-k)\big)^2 |k|^{2H-2}dxdk=J_1(t-s),
\]
and for the third term, we use $\int_{\bR}\G_{t-s}^2(x-y-k)dx=J_4(t-s)$. We obtain:
\begin{align*}
& \int_{\bR^2}\big\|X^{(r,z,h)}(t,x)\big\|_p^2 \, |h|^{2H-2}dx dh \les \int_{\bR^2}\big|X_0^{(r,z,h)}(t,x)\big|^2 \,|h|^{2H-2}dx dh \\
& \quad + \int_r^t J_1(t-s) \left( \int_{\bR^2}\big\|X^{(r,z,h)}(s,y)\big\|_p^2 \, |h|^{2H-2}dydh \right) ds \\
& \quad + \int_r^t J_4(t-s)  \left(\int_{\bR^3} \big\|X^{(r,z,h)}(s,y)-X^{(r,z,h)}(s,y+k)\big\|_p^2
\, |h|^{2H-2}|k|^{2H-2} dy dh dk\right) ds.
\end{align*}
Taking the supremum over $z \in \bR$, we obtain that for any $t \in [r,T]$,
\begin{equation}
\label{alpha-eq}
\alpha^{(r)}(t) \les \big\|X_0^{(r)}\big\|_{\cY_{r,1}}^2+\int_r^t \big(J_1(t-s)\alpha^{(r)}(s)+J_4(t-s) \beta^{(r)}(s)\big)ds,
\end{equation}
where
\begin{align*}
\alpha^{(r)}(t)&= \sup_{z\in \bR}\int_{\bR^2} \big\|X^{(r,z,h)}(t,x)\big\|_p^2  \, |h|^{2H-2}dxdh\\
\beta^{(r)}(t) &=\sup_{z\in \bR}\int_{\bR^3}
\big\|X^{(r,z,h)}(t,x)-X^{(r,z,h)}(t,x+k)\big\|_p^2
\, |h|^{2H-2}|k|^{2H-2} dx dh dk.
\end{align*}

To make this argument work, relation \eqref{alpha-eq} should be paired with a similar inequality for $\beta^{(r)}(t)$. We show below how to obtain this. Again, by BDG inequality \eqref{BDG} and triangle inequality,
\begin{align*}
& \big\|X^{(r,z,h)}(t,x)-X^{(r,z,h)}(t,x+k)\big\|_p^2 \les \big|X_0^{(r,z,h)}(t,x)-X_0^{(r,z,h)}(t,x+k)\big|^2+\\
& \quad \int_r^t \int_{\bR^2}\big\|\big(\G_{t-s}(x-y)-\G_{t-s}(x+k-y)\big) X^{(r,z,h)}(s,y)-\\
& \quad \big(\G_{t-s}(x-y-w)-\G_{t-s}(x+k-y-w) \big)X^{(r,z,h)}(s,y+w)\big\|_p^2 |w|^{2H-2} dwdyds\\
& \les |X_0^{(r,z,h)}(t,x)-X_0^{(r,z,h)}(t,x+k)|^2+\int_r^t \int_{\bR^2} \big|\big(\G_{t-s}(x-y)-\G_{t-s}(x+k-y)\big)-\\
& \quad \quad \quad \big(\G_{t-s}(x-y-w)-\G_{t-s}(x+k-y-w)\big)\big|^2 \, \big\|X^{(r,z,h)}(t,x)\big\|_p^2 \,|w|^{2H-2}dwdyds+ \\
& \int_r^t \int_{\bR^2} \big| \G_{t-s}(x-y-w)-\G_{t-s}(x+k-y-w)\big|^2 \,
\big\|X^{(r,z,h)}(s,y)-X^{(r,z,h)}(s,y+w)\big\|_p^2 \\
& \quad \quad \quad |w|^{2H-2} dwdyds.
\end{align*}
We multiply by $|h|^{2H-2}|k|^{2H-2}$ and we integrate $dx dh dk$ on $\bR^3$. For the second term, we use
\begin{align*}
& \int_{\bR^3}\big|\big(\G_{t-s}(x-y)-\G_{t-s}(x+k-y)\big)-
\big(\G_{t-s}(x-y-w)-\G_{t-s}(x+k-y-w)\big)\big|^2 \\ & \quad \quad \quad |k|^{2H-2}|w|^{2H-2} dw dx dk=J_3(t-s),
\end{align*}
and for the third one,
$\int_{\bR^2} \big| \G_{t-s}(x-y-w)-\G_{t-s}(x+k-y-w)\big|^2 \,
|k|^{2H-2}dxdk=J_1(t-s)$.
We obtain:
\begin{align*}
& \int_{\bR^3}\big\|X^{(r,z,h)}(t,x)-X^{(r,z,h)}(t,x+k)\big\|_p^2 \, |h|^{2H-2}|k|^{2H-2} dxdhdk \les \\
& \int_{\bR^3}\big|X_0^{(r,z,h)}(t,x)-X_0^{(r,z,h)}(t,x+k)\big|^2 \,|h|^{2H-2}|k|^{2H-2} dxdhdk+\\
& \quad \int_r^t J_3(t-s) \left(\int_{\bR^2}
\big\|X^{(r,z,h)}(t,x)\big\|_p^2 \,|h|^{2H-2} dydh\right)ds+\\
& \quad \int_r^t J_1(t-s) \left(\int_{\bR^3}
\big\|X^{(r,z,h)}(s,y)-X^{(r,z,h)}(s,y+w)\big\|_p^2 |h|^{2H-2} |w|^{2H-2} dydwdh \right)  ds.
\end{align*}
Taking the supremum over $z \in \bR$, we obtain that for any $t \in [r,T]$,
\begin{equation}
\label{beta-eq}
\beta^{(r)}(t) \les \big\|X_0^{(r)}\big\|_{\cY_{r,2}}^2+\int_r^t \Big(J_3(t-s)\alpha^{(r)}(s)+J_1(t-s) \beta^{(r)}(s)\Big)ds.
\end{equation}

We now prove the existence of solution of equation \eqref{eq-Xr}, for fixed $r \in [0,T]$.  For $n\geq 0$, we define the Picard iterations:
\[
X_{n+1}(t,x,z,h)=X_0^{(r)}(t,x,z,h)+\int_r^t \int_{\bR}\G_{t-s}(x-y)X_n(s,y,z,h)\fX(ds,dy),
\]
for all $t \in [r,T]$ and $x,z,h \in \bR$. Denote $X_n^{(r,z,h)}(t,x):=X_n^{(r)}(t,x,z,h)$. Letting $X_{-1}^{(r,z,h)}=0$, we see that the following recurrence relation holds for any $n\geq 0$:
\[
\big(X_{n+1}^{(r,z,h)}-X_{n}^{(r,z,h)}\big)(t,x)=
\int_r^t \int_{\bR}\G_{t-s}(x-y)\big(X_n^{(r,z,h)}-X_{n-1}^{(r,z,h)}\big)(s,y)
\fX(ds,dy).
\]

For any $n\geq 0$ and $t \in [r,T]$, we define
\begin{align*}
\alpha_n^{(r)}(t) &= \sup_{z\in \bR} \int_{\bR^2}\big\|(X_n^{(r,z,h)}-X_{n-1}^{(r,z,h)})(t,x)\big\|_p^2 \,|h|^{2H-2}dxdh \\
\beta_n^{(r)}(t) &=\sup_{z\in \bR} \int_{\bR^3}\big\|(X_n^{(r,z,h)}-X_{n-1}^{(r,z,h)})(t,x)-
(X_n^{(r,z,h)}-X_{n-1}^{(r,z,h)})(t,x+k)\big\|_p^2 \\
& \quad \quad \quad \quad \quad  |h|^{2H-2}
|k|^{2H-2}dxdh dk.
\end{align*}

Similarly to \eqref{alpha-eq} and \eqref{beta-eq}, we obtain that for any $n\geq 0$ and $t \in [r,T]$,
\begin{align*}
\alpha_{n+1}^{(r)}(t) \leq C_{p,H}\int_r^t \big(J_1(t-s)\alpha_n^{(r)}(s)+J_4(t-s) \beta_n^{(r)}(s)\big)ds \\
\beta_{n+1}^{(r)}(t) \leq C_{p,H}\int_r^t \Big(J_3(t-s)\alpha_n^{(r)}(s)+J_1(t-s) \beta_n^{(r)}(s)\Big)ds,
\end{align*}
where $C_{p,H}$ is a constant depending on $p$ and $H$. Denote $f_n^{(r)}=\alpha_n^{(r)}+\beta_n^{(r)}$ and $J=J_1+J_3+J_4$.
Then, for any $n \geq 0$ and $t \in [r,T]$,
\[
f_{n+1}^{(r)}(t) \leq C_{p,H} \int_r^t f_n^{(r)}(s)J(t-s)ds.
\]

For the initial term, we have:
\begin{equation}
\label{M-bound1}
M_0^{(r)}:=\sup_{t \in [r,T]} \big(\alpha_{0}^{(r)}(t)+\beta_{0}^{(r)}(t)\big) \leq \|X_0^{(r)}\|_{\cY_{r}}^2.
\end{equation}

By Assumption A, $Q(T):=C_{p,H}\int_0^T J(t)dt<\infty$. By Lemma 15 of \cite{dalang99} (an extension of Gronwall Lemma), there exists a sequence $(a_n)_{n\geq 0}$ of non-negative real numbers with the property $\sum_{n\geq 1}a_n^{1/q}<\infty$ for any $q\geq 1$, such that
\[
f_n^{(r)}(t) \leq M_0^{(r)}a_n \quad \mbox{for all $t \in [r,T]$}.
\]
More precisely,
\[
a_n=a_n(p,H,T)=Q(T)^n P(S_n \leq T),
\]
where $S_n=\sum_{i=1}^n X_i$ and $(X_i)_{i\geq 1}$ are i.i.d. random variables with values in $[0,T]$ of law $J(\cdot)/\int_0^T J(t)dt$. It follows that:
\begin{equation}
\label{M-bound2}
\|X_n^{(r)}-X_{n-1}^{(r)}\|_{\cY_r}=\sup_{t\in [r,T]} \big(\alpha_n^{(r)}(t)\big)^{1/2}+
 \sup_{t\in [r,T]} \big(\beta_n^{(r)}(t)\big)^{1/2}\leq 2 (M_0^{(r)})^{1/2}a_n^{1/2}.
\end{equation}
From this, we deduce that
$\sum_{n\geq 0}\|X_n^{(r)}-X_{n-1}^{(r)}\|_{\cY_r} <\infty$.
Therefore, $(X_n^{(r)})_{n\geq 0}$ is a Cauchy sequence in $\cY_{r}$. Its limit $X^{(r)}$ is a solution of equation \eqref{eq-Xr}. Uniqueness follows by standard methods.

b) From \eqref{M-bound1} and \eqref{M-bound2}, we obtain that
$\|X_n^{(r)}-X_{n-1}^{(r)}\|_{\cY_r} \leq 2 \|X_0^{(r)}\|_{\cY_{r}} a_n^{1/2}$. Hence,
\[
\|X_n^{(r)}\|_{\cY_r} \leq \|X_0^{(r)}\|_{\cY_r}+\sum_{k=1}^n \|X_k^{(r)}-X_{k-1}^{(r)}\|_{\cY_r}\leq
\Big(1+2 \sum_{k=1}^n a_k^{1/2}\Big)\|X_0^{(r)}\|_{\cY_r}.
\]
Letting $n \to \infty$, we deduce that
$\|X^{(r)}\|_{\cY_r}\leq \Big(1+2 \sum_{n\geq 1} a_n^{1/2}\Big)\|X_0^{(r)}\|_{\cY_r}$ for any $r \in [0,T]$. Relation \eqref{M-sup-r} follows with
\[
C_{p,H,T}=1+2 \sum_{n\geq 1} a_n^{1/2}
\]

\end{proof}

\begin{example}
\label{TheoremF-ex}
{\rm As an application of Theorem \ref{theoremF} we consider the (hAm) model \eqref{eq-V} with noise $\fX$ and Dirac delta initial velocity. For any $r \in [0,T]$ and $z \in \bR$ fixed, the solution $V_{\theta}^{(r,z)}$ satisfies the integral equation \eqref{eq-V-int}. Then
\[
X^{(r)}(t,x,z,h)=X^{(r,z,h)}(t,x)=V_{\theta}^{(r,z)}(t,x)-
V_{\theta}^{(r,z+h)}(t,x)
\]
is the unique solution of the stochastic Volterra equation \eqref{eq-Xr} with $\G_t:=\sqrt{\theta}G_t$ and initial condition:
\[
X_0^{(r)}(t,x,z,h)=G_{t-r}(x-z)-G_{t-r}(x-z-h).
\]
In this case, the functions $J_1,J_3,J_4$ are replaced, respectively, by
\[
\bar{J}_{1}(t)=\theta C_{H,1}t^{2H}, \quad \bar{J}_{3}(t)=\theta C_{H,3}t^{4H-1}, \quad \bar{J}_{4}(t)=\theta C_{H,4}t,
\]
and the constant $a_n$ is replaced by
\begin{align*}
\overline{a}_n&=\overline{a}_n(p,H,T,\theta)=\bar{Q}(T)^n P(S_n\leq T)\\
&=C_{p,H}^n\theta^n\big(C_{H,1}'T^{2H+1}+C_{H,3}'T^{4H}+C_{H,4}'T^2\big)^nP(S_n\leq T),
\end{align*}
where $\bar{Q}(T):=C_{p,H}\int_0^T \bar{J}(t)dt$ and $\bar{J}=\bar{J}_1+\bar{J}_3+\bar{J}_4$.
Here $S_n=\sum_{i=1}^n X_i$ where $(X_i)_{i\geq 1}$ are i.i.d. random variables on $[0,T]$ of law $\bar{J}(\cdot)/\int_0^T \bar{J}(T)$ (which does not depend on $\theta$). As for the bound for the initial condition, we have:
\begin{align*}
\|X_0^{(r)}\|_{\cY_{r,1}}^2&=\sup_{t \in [r,T]} \sup_{z\in \bR} \int_{\bR^2} \big|G_{t-r}(x-z)-G_{t-r}(x-z-h)\big|^2 \, |h|^{2H-2} dxdh \\ & =C_{H,1}\sup_{t \in [r,T]}(t-r)^{2H}=C_{H,1}(T-r)^{2H}\\
\|X_0^{(r)}\|_{\cY_{r,2}}^2&=\sup_{t \in [r,T]} \sup_{z\in \bR} \int_{\bR^3} \big|\big(G_{t-r}(x-z)-G_{t-r}(x-z-h)\big)-\\
& \quad \quad \quad
\big(G_{t-r}(x+k-z)-G_{t-r}(x+k-z-h)\big)\big|^2 \, |h|^{2H-2} |k|^{2H-2}dxdh dk \\ & =C_{H,3}\sup_{t \in [r,T]}(t-r)^{4H-1}=C_{H,3}(T-r)^{4H-1},
\end{align*}
and hence
\[
M:=\sup_{r \in [0,T]}\|X_0^{(r)}\|_{\cY_{r}}= \sup_{r \in [0,T]}\big(C_{H,1}^{1/2}(T-r)^{H}+C_{H,3}^{1/2}(T-r)^{2H-1/2}\big) \leq C_{H}^{1/2} (T^{H}+T^{2H-1/2}),
\]
where $C_H=\max(C_{H,1},C_{H,3})$.
By Theorem \ref{theoremF}, we infer that:
\begin{align}
\label{V-int1}
& \sup_{0\leq r\leq t \leq T} \sup_{z\in \bR} \int_{\bR^2}\big\|V_{\theta}^{(r,z)}(t,x)-
V_{\theta}^{(r,z+h)}(t,x)\big\|_p^2 |h|^{2H-2}dxdh \leq C_{V, p,H,T,\theta}\\
\nonumber
& \sup_{0\leq r\leq t \leq T} \sup_{z\in \bR} \int_{\bR^3}\big\|V_{\theta}^{(r,z)}(t,x)-
V_{\theta}^{(r,z+h)}(t,x)-V_{\theta}^{(r,z)}(t,x+k)+
V_{\theta}^{(r,z+h)}(t,x+k)\big\|_p^2 \\
\label{V-int2}
& \qquad \quad \quad \quad \qquad |h|^{2H-2}
|k|^{2H-2}dxdh dk  \leq C_{V,p,H,T,\theta},
\end{align}
where
\begin{equation}
\label{TheoremF-const}
C_{V,p,H,T,\theta}= 2 C_H(T^{2H}+T^{4H-1}) \Big(1+2\sum_{n\geq 1}\overline{a}_n(p,H,T,\theta)^{1/2}\Big)^2.
\end{equation}
}
\end{example}

\begin{remark}
\label{Rmk-norm-dim}
{\rm Theorem \ref{theoremF} also holds if we consider processes $\{Y(t,x,z,\pmb{h})\}$ with multi-parameter $\pmb{h}=(h_1,\ldots,h_m)\in \bR^m$ instead of $h$, and we replace the integral $|h|^{2H-2}dh$ on $\bR$ by the integral $\prod_{i=1}^{m}|h_i|^{2H-2}d\pmb{h}$ on $\bR^m$ in the definitions of the norms $\|\cdot\|_{\cY_{r,1}}$ and  $\|\cdot\|_{\cY_{r,2}}$,
In addition, Theorem \ref{theoremF} holds if we remove $h$ from the parametrization, i.e. we consider processes $\{Y(t,x,z)\}$, and we drop the integral $|h|^{2H-2}dh$ from the definitions of the norms $\|\cdot\|_{\cY_{r,1}}$ and  $\|\cdot\|_{\cY_{r,2}}$.
}
\end{remark}

\section{Integral inequalities}
\label{section-appC-ineq}

In this section, we give some basic inequalities related to the function $\cF G_t(\xi)=\frac{\sin(t|\xi|)}{|\xi|}$.

\begin{lemma} \label{Lem-ineq-integral}
Let $\varphi \in L^1(\bR)$. For $\beta \in (0,2)$ and $t,s \in \bR$, we have
\begin{align*}
	\int_{\bR} \dfrac{|\sin(t|x|) \sin(s|x|)|}{|x|^2} |\varphi(x)| |x|^{\beta} dx
	\le (1+|ts|) \|\varphi\|_{L^1(\bR)}.
\end{align*}
\end{lemma}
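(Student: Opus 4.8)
The plan is to reduce the estimate to a pointwise bound on the kernel
\[
K(x) := \frac{|\sin(t|x|)\sin(s|x|)|}{|x|^2}\,|x|^{\beta} = \frac{|\sin(t|x|)|\,|\sin(s|x|)|}{|x|^{2-\beta}},
\]
where I note that $\beta \in (0,2)$ forces the exponent $2-\beta$ to lie in $(0,2)$ as well. Once I show that $K(x) \le 1 + |ts|$ for all $x \neq 0$, the conclusion follows at once by integrating against $|\varphi|$:
\[
\int_{\bR} K(x)\,|\varphi(x)|\,dx \le (1+|ts|)\int_{\bR}|\varphi(x)|\,dx = (1+|ts|)\,\|\varphi\|_{L^1(\bR)}.
\]

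To establish the pointwise bound, I would split the real line at $|x| = 1$ and use the two elementary facts $|\sin u| \le 1$ and $|\sin u| \le |u|$. On the region $|x| \ge 1$, I bound both sine factors by $1$, so that $K(x) \le |x|^{-(2-\beta)} \le 1$, using $2-\beta > 0$ and $|x| \ge 1$. On the region $0 < |x| \le 1$, I instead use $|\sin(t|x|)| \le |t|\,|x|$ and $|\sin(s|x|)| \le |s|\,|x|$, which gives $K(x) \le |ts|\,|x|^{\beta} \le |ts|$, using $\beta > 0$ and $|x| \le 1$. Combining the two regions yields $K(x) \le \max(1,|ts|) \le 1 + |ts|$ for every $x \neq 0$.

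There is no serious obstacle here: the whole argument reduces to this pointwise kernel bound, and the only mild subtlety is choosing the correct crossover point (namely $|x| = 1$) so that $|x|^{-(2-\beta)}$ and $|x|^{\beta}$ are each controlled by $1$ on the respective regions. The point $x = 0$ causes no difficulty, since $K$ in fact extends continuously to $0$ (the numerator behaves like $|ts|\,|x|^{2}$ near the origin, so $K(x) \sim |ts|\,|x|^{\beta} \to 0$), and in any case $\{0\}$ is Lebesgue-null; the hypothesis $\varphi \in L^1(\bR)$ then guarantees finiteness of the final integral.
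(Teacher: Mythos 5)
Your proof is correct and follows essentially the same route as the paper: both split at $|x|=1$ and use $|\sin u|\le |u|$ near the origin and $|\sin u|\le 1$ away from it. The only cosmetic difference is that you package the estimate as a pointwise bound on the kernel before integrating, whereas the paper splits the integral first; the content is identical.
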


\begin{proof}
We denote by $I_1$, $I_2$ the integrals over the regions $|x|\leq 1$, respectively $|x|>1$. In these regions, we use the inequalities
$|\sin(x)| \le |x|$, respectively $|\sin(x)| \le 1$. Then,
\begin{align*}
I_1 & \leq |ts|\int_{|x|\leq 1}|\varphi(x)||x|^{\beta}dx\leq  |ts|\int_{|x|\leq 1}|\varphi(x)|dx\\
I_2 & \leq \int_{|x|> 1}|\varphi(x)||x|^{\beta-2}dx\leq  \int_{|x|> 1}|\varphi(x)|dx.
\end{align*}
\end{proof}

The next result is obtained by a similar method.

\begin{lemma}
\label{Lem-ineq-integral-1}
For $\beta \in (0,1)$ and $t,s \in \bR$, we have
\begin{align*}
	\int_{\bR} \dfrac{|\sin(t|x|) \sin(s|x|)|}{|x|^2} |x|^{\beta} dx
	\le \dfrac{2}{1-\beta} + \dfrac{2|ts|}{1+\beta}.
\end{align*}
\end{lemma}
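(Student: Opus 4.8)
The plan is to follow exactly the splitting strategy used in the proof of Lemma \ref{Lem-ineq-integral}, decomposing the integral into the two regions $\{|x|\le 1\}$ and $\{|x|>1\}$ and bounding the two sine factors differently in each region. Write $I=I_1+I_2$, where $I_1$ and $I_2$ denote the integrals over $\{|x|\le 1\}$ and $\{|x|>1\}$ respectively. The key observation is that near the origin the factor $|x|^{-2}$ is dangerous and should be absorbed using the bound $|\sin u|\le |u|$, while away from the origin the bound $|\sin u|\le 1$ suffices and the decay $|x|^{\beta-2}$ controls the integral.

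For $I_1$, I would use $|\sin(t|x|)|\le |t|\,|x|$ and $|\sin(s|x|)|\le |s|\,|x|$, so that the integrand is bounded by $|ts|\,|x|^{\beta}$. Then
\begin{align*}
I_1 \le |ts|\int_{|x|\le 1}|x|^{\beta}\,dx = 2|ts|\int_0^1 x^{\beta}\,dx = \frac{2|ts|}{1+\beta},
\end{align*}
which is finite because $\beta>0$. For $I_2$, I would instead use $|\sin(t|x|)|\le 1$ and $|\sin(s|x|)|\le 1$, so the integrand is bounded by $|x|^{\beta-2}$, giving
\begin{align*}
I_2 \le \int_{|x|>1}|x|^{\beta-2}\,dx = 2\int_1^{\infty} x^{\beta-2}\,dx = \frac{2}{1-\beta},
\end{align*}
where convergence at infinity uses precisely $\beta<1$ (so that $\beta-2<-1$). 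Adding the two bounds yields the claimed estimate.

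There is essentially no substantive obstacle here: the statement is an elementary one-dimensional integral estimate, and the only points requiring attention are (i) verifying that the exponent conditions $\beta>0$ and $\beta<1$ are exactly what guarantee the finiteness of $I_1$ near the origin and of $I_2$ at infinity, and (ii) confirming that the two constants $\tfrac{2}{1-\beta}$ and $\tfrac{2|ts|}{1+\beta}$ emerge with the stated coefficients from the factor of $2$ produced by integrating over both signs of $x$. Since this parallels the previous lemma verbatim, I would present the proof in a few compact lines, remarking only that it is obtained ``by a similar method'' and spelling out the two regional bounds as above.
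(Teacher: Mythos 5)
Your proposal is correct and follows exactly the route the paper intends: the paper proves this lemma "by a similar method" to Lemma \ref{Lem-ineq-integral}, i.e.\ splitting at $|x|=1$, using $|\sin u|\le|u|$ on $\{|x|\le 1\}$ and $|\sin u|\le 1$ on $\{|x|>1\}$, and your explicit evaluation of the two integrals yields precisely the stated constants $\tfrac{2|ts|}{1+\beta}$ and $\tfrac{2}{1-\beta}$.
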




\end{document}